\newtheorem{thm}{Theorem}[section]
\newtheorem{cor}[thm]{Corollary}
\newtheorem{Def}[thm]{Definition}
\newtheorem{prop}[thm]{Proposition}
\newtheorem{rem}[thm]{Remark}
\newtheorem{ex}[thm]{Example}
\newcommand{\bdfn}{\begin{Def} \rm}
\newcommand{\edfn}{\end{Def}}
\newcommand{\ra}{\rightarrow}
\newcommand{\Lgra}{\Longrightarrow}
\newcommand{\Lglra}{\Longleftrightarrow}
\newcommand{\es}{\emptyset}
\newcommand{\ci}{\subseteq}
\newcommand{\ds}{\displaystyle}
\newcommand{\e}{\varepsilon}
\newcommand{\sm}{\setminus}
\newcommand{\iy}{\infty}
\newcommand{\beqa}{\begin{eqnarray*}}
\newcommand{\eeqa}{\end{eqnarray*}}
\newcounter{cnt1}
\newcounter{cnt2}
\newcounter{cnt3}
\newcounter{cnt4}
\newcommand{\blr}{\begin{list}{$($\roman{cnt1}$)$} {\usecounter{cnt1}
 \setlength{\topsep}{0pt} \setlength{\itemsep}{0pt}}}
\newcommand{\blR}{\begin{list}{\Roman{cnt4}.\ } {\usecounter{cnt4}
 \setlength{\topsep}{0pt} \setlength{\itemsep}{0pt}}}
\newcommand{\bla}{\begin{list}{$(\alph{cnt2})$} {\usecounter{cnt2}
 \setlength{\topsep}{0pt} \setlength{\itemsep}{0pt}}}
\newcommand{\bln}{\begin{list}{$($\arabic{cnt3}$)$} {\usecounter{cnt3}
 \setlength{\topsep}{0pt} \setlength{\itemsep}{0pt}}}
\newcommand{\el}{\end{list}}
\begin{document}

\title[Hahn-Banach smoothness and related properties in Banach spaces]{On Hahn-Banach smoothness and related properties in Banach spaces}

\author[Daptari]{Soumitra Daptari}

\address{Stat–Math Division, Indian Statistical Institute, 203, B. T. Road, Kolkata 700108, India}

\email{daptarisoumitra@gmail.com 
}

\subjclass[2000]{46A22, 46B10, 46B25, 46B20, 46B45. \hfill \textbf{\today} }

\keywords{Hahn-Banach
theorem, property-$(U)$, property-$(wU)$, property-$(SU)$, property-$(HB)$, $M$-ideal.}

\maketitle
\begin{abstract}
In this paper, we study several variants of Hahn-Banach smoothness, viz., property-$(SU)$/$(HB)$/$(wU)$, where property-$(SU)$ and property-$(HB)$ are stronger notions and property-$(wU)$ is a weaker notion of Hahn-Banach smoothness. We characterize property-$(wU)$ and property-$(HB)$. It is observed that $L_1(\mu)$ has property-$(wU)$ in $L_1(\mu,(\mathbb{R}^2,\|.\|_2))$ but it does not have property-$(U)$ in $L_1(\mu,(\mathbb{R}^2,\|.\|_2))$ for a non-atomic measure $\mu$. We derive a sufficient condition when property-$(wU)$ is equivalent to property-$(U)$ of a subspace. It is observed that these properties are separably determined. Finally, finite-dimensional and finite co-dimensional subspaces of $c_0$, $\ell_p$ ($1\leq p<\iy$) having these properties are characterized.
\end{abstract}

\section{Introduction}
One of the major cornerstones of functional analysis is the Hahn-Banach theorem. It ensures that a bounded linear functional on a subspace of a normed linear space can be extended to the entire space while preserving its norm. In the study, we investigate scenarios in which the extension is unique. 
\subsection{Notations and definitions}
By $X$, we mean a Banach space over real scalars. We consider Y to be a closed subspace of X, unless we are discussing dense linear subspaces. Closed unit ball and unit sphere of $X$ are denoted by $B_X$ and $S_X$. The sets $\{x^*\in X^*:x^*(y)=0,\,\forall\, y\in Y \}$ and $\{x^*\in X^*:\|x^*\|=\|x^*|_Y\|\}$ are denoted by $Y^\perp$ and $Y^\#$, respectively.

We begin this study with the following definition by R. R. Phelps.
\bdfn\cite[Pg. 238]{P}
A subspace $Y$ of $X$ is said to have {\it property-$(U)$} if every bounded linear functional on $Y$ has a unique norm-preserving extension over $X$.
\edfn 
It's also known as a Hahn-Banach smooth subspace if $Y$ has the property-$(U)$. A weakened variant of the preceding concept is known as weak Hahn-Banach smoothness. We designate the same as property-$(wU)$.
\bdfn\cite{LA}
A subspace $Y$ of $X$ is said to have {\it property-$(wU)$} whenever $y^*\in Y^*$ satisfying $\|y^*\|=|y^*(y_0)|$
for some $y_0\in S_Y$ has a unique norm-preserving extension to $X$. 
\edfn
A few strengthenings of property-(U) are following.  
\bdfn
Let $Y$ be a subspace of $X$ such that $Y^\perp$ is complemented in $X^*$ by a subspace $G$.
\bla
\item \cite{O}  $Y$ is said to have  {\it property-$(SU)$} in $X$ if for each $x^*\in X^*$,
$$\|x^*\|>\|y^\#\|,$$
$\text{ whenever } x^*=y^\#+y^\perp \text{ with } y^\#\in G, y^\perp(\neq 0) \in Y^{\perp}.$
\item \cite{H} $Y$ is said to have {\it property-$(HB)$} in $X$ if for each $x^*\in X^*$,
$$\|x^*\|>\|y^\#\| ~\&~ \|x^*\|\geq \|y^\perp\|,$$
$$\text{ whenever } x^*=y^\#+y^\perp \text{ with } y^\#\in G, y^\perp(\neq 0) \in Y^{\perp}.$$
\item \cite{HWW}  $Y$ is called an {\it $M$-ideal} if for each $x^*\in X^*$,
$$\|x^*\|=\|y^\#\|+\|y^\perp\|,$$ 
$\text{ whenever } x^*=y^\#+y^\perp \text{ with } y^\#\in G, y^\perp\in Y^{\perp}.$
\el
\edfn
 J. Hennefeld has introduced the $HB$-subspace in \cite{H}. We refer to the same as property-$(HB)$.

For a subspace $Y$ of $X$, we define $P_Y(x)=\{y\in Y:\|x-y\|=d(x, Y)\}$, the set of all the best approximations from $x$ to $Y$. $P_Y$ is known as a metric projection on the subspace $Y$. Note that for $x\in X$, $P_Y(x)$ may be empty. If $P_Y(x)$ is non-empty for all $x\in X\sm Y$, then $Y$ is said to be a proximinal subspace of $X$. The subspace $Y$ is said to be a Chebyshev subspace if $P_Y(x)$ is singleton for all $x\in X$. 

\subsection{A short background and motivation of this work} The investigation into the uniqueness of the Hahn-Banach extension started in the decade from 1930 to 1939.  A. E. Taylor in \cite{T},  demonstrated that if the dual space of a normed linear space is strictly convex, then every subspace has property-$(U)$ in the normed linear space. After a long period, S. R. Foguel in \cite{F} discovered that the converse is true. R. R. Phelps in \cite{P} first concentrated on an individual subspace with property-$(U)$ and established that a subspace $Y$ of $X$ has property-$(U)$ if and only if the metric projection on $Y^\perp$ is single-valued. It should be noted that the metric projection may not be linear in this scenario. The author identified specific finite-dimensional and finite co-dimensional subspaces with property-$(U)$ in $\ell_1$, $c_0$, $\ell_\infty$, $C(K)$, $L_1(\mu)$, and $L_\infty(\mu)$, where $K$ is a compact Hausdorff space and $\mu$ is a positive measure. The author in \cite{H} has introduced property-$(HB)$, which is a stronger notion of property-$(U)$ but very close to $M$-ideal. The author in \cite{O} introduced property-$(SU)$ as a concept stronger than property-$(U)$ but weaker than property-$(HB)$ after a few years. It is known that $Y$ is an $M$-ideal in $X$ if and only if $P_{Y^\perp}$ is a $L$-projection, i.e., $\|x^*\|=\|P_{Y^\perp}(x^*)\|+\|x^*-P_{Y^\perp}(x^*)\|$ for all $x^*\in X^*$ (see \cite{HWW}). The authors in \cite{DP} have addressed a longstanding gap in the characterization of properties $(wU)/(SU)/(HB)$. They have characterized these three properties in terms of metric projection (see \cite[Theorem~2.1, Theorem~3.3, Theorem~3.4]{DP}).

From an alternative viewpoint, $Y$ is an $M$-ideal in $X$ if and only if $Y^{\#}$ is a closed subspace of $X^*$ and $X^*=Y^{\#}\oplus_{\ell_1}Y^\perp$ (see \cite{HWW}). The author in \cite{O} demonstrated that $Y$ has property-$(SU)$ in $X$ if and only if $Y^{\#}$ is a closed subspace of $X^*$ and $X^*=Y^{\#}\oplus Y^\perp$. It is crucial to note that $Y^\#$ may not show linearity in $X^*$ even when $Y$ has property-$(U)$ in $X$. According to \cite{P}, $Y$ has property-$(U)$ in $X$ if and only if every element in the dual space can be uniquely expressed as a sum of two elements from $Y^{\#}$ and $Y^\perp$. The corresponding characteristics for property-$(wU)$ and property-$(HB)$ are missing. The gaps are filled in this study, these two properties are described in terms of $Y^\#$ and $Y^\perp$. We explore these properties from different angles. For finite-dimensional subspaces, property-$(wU)$ and property-$(U)$ are coinside. But to date, there are no known infinite-dimensional subspaces in classical Banach spaces that have property-$(wU)$ but do not have property-$(U)$. In this study, we cover a few such examples of subspaces in classical Banach spaces. We classify the finite-dimensional and finite co-dimensional subspaces as having property-$(SU)$ and property-$(HB)$ in classical Banach spaces, as motivated by \cite{P}.

\section{Property-$(U)$ and its alterations}
In this section, we mainly study property-$(wU)$ and property-$(HB)$. We demonstrate the characterizations of these properties. We discuss the transitivity of these properties. Finally, we address the fact that these two properties, along with property-$(U)$ and property-$(SU)$, are separably determined.
\subsection{On property-$(wU)$}
We begin this subsection with the following complete characterization of property-$(wU)$, motivated by \cite[Theorem~2.1]{LA} and \cite[Theorem~2.1]{DP}.
\begin{thm}
    Let $Y$ be a subspace of $X$. Then, the following are equivalent:
    \bln
    \item $Y$ has property-$(wU)$ in $X$.
    \item $P_{Y^{\perp}}(x^*)=0$ for $x^*\in X^*$ such that $\|x^*\|=|x^*(y)|$ for some $y\in S_Y$.
    \item For any $x_1^*,x_2^*$ in $X^*$ with $\|x_1^*\|=\|x_2^*\|=|x_1^*(y)|=|x_2^*(y)|$ for some $y\in S_Y$ and $x_1^*+x_2^*\in Y^{\perp}$, we have $x_1^*+x_2^*=0$.
    \el 
\end{thm}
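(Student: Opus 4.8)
The plan is to establish the cyclic chain of implications $(1)\Rightarrow(2)\Rightarrow(3)\Rightarrow(1)$, unwinding the definition of property-$(wU)$ through the known correspondence between norm-preserving extensions and best approximations in $Y^\perp$. The starting observation, which I would record first, is the standard bijection: if $x^*\in X^*$ restricts to $y^*\in Y^*$, then the norm-preserving extensions of $y^*$ are exactly the functionals of the form $x^*-z^*$ with $z^*\in Y^\perp$ and $\|x^*-z^*\|=\|y^*\|=d(x^*,Y^\perp)$; equivalently $x^*-z^*$ is a best approximation error, so the set of norm-preserving extensions of $y^*$ is in affine correspondence with $P_{Y^\perp}(x^*)$. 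Thus ``$y^*$ has a unique norm-preserving extension'' translates precisely to ``$P_{Y^\perp}(x^*)$ is a singleton'' for any fixed extension $x^*$ of $y^*$.

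For $(1)\Rightarrow(2)$, I would take $x^*\in X^*$ with $\|x^*\|=|x^*(y)|$ for some $y\in S_Y$ and set $y^*=x^*|_Y$; the norm condition forces $\|y^*\|=|y^*(y)|=\|x^*\|$, so $y^*$ is a functional of the type governed by property-$(wU)$, which grants it a unique norm-preserving extension. Since $x^*$ itself is a norm-preserving extension of $y^*$ (because $\|x^*\|=\|y^*\|$), uniqueness forces $P_{Y^\perp}(x^*)=\{0\}$, i.e.\ the zero functional is the unique best approximation of $x^*$ in $Y^\perp$, which is exactly the assertion $P_{Y^{\perp}}(x^*)=0$. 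The implication $(2)\Rightarrow(1)$ reverses this: given $y^*\in Y^*$ with $\|y^*\|=|y^*(y_0)|$, pick any norm-preserving extension $x^*$ (one exists by Hahn-Banach), note $\|x^*\|=|x^*(y_0)|$ so hypothesis $(2)$ applies and gives $P_{Y^\perp}(x^*)=0$; then every other norm-preserving extension differs from $x^*$ by an element of $Y^\perp$ lying in $P_{Y^\perp}(x^*)$, forcing uniqueness.

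For the equivalence of $(3)$ I would argue $(2)\Leftrightarrow(3)$ by relating single-valuedness of the metric projection at such points to the ``affine midpoint'' formulation in $(3)$. Given two functionals $x_1^*,x_2^*$ as in $(3)$, their common restriction-norm condition and the hypothesis $x_1^*+x_2^*\in Y^\perp$ should be massaged so that $2$ and $-1$ scalings realize $x_1^*+x_2^*$ (or a suitable translate) as a best approximation in $Y^\perp$; invoking $(2)$ then pins it to $0$. Conversely, a failure of $(2)$—a nonzero $z^*\in P_{Y^\perp}(x^*)$—would be used to manufacture a pair violating $(3)$, likely by taking $x_1^*=x^*$ and $x_2^*=z^*-x^*$ or a symmetric variant chosen so both have the required norm-attainment at $y$ and sum into $Y^\perp$.

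The main obstacle I anticipate is the bookkeeping in $(2)\Leftrightarrow(3)$: I must verify that the candidate functionals constructed really do satisfy all three constraints simultaneously—equal norms, norm-attainment at a \emph{common} $y\in S_Y$, and sum in $Y^\perp$—and that the norm-attainment is genuinely inherited rather than merely assumed. The symmetry between $x_1^*$ and $x_2^*$ in $(3)$ suggests that the natural move is to consider $x^*=x_1^*$ together with the best-approximation element $z^*=x_1^*+x_2^*\in Y^\perp$ and check that $\|x_1^*-z^*\|=\|{-x_2^*}\|=\|x_1^*\|=d(x_1^*,Y^\perp)$, so that $z^*\in P_{Y^\perp}(x_1^*)$; applying $(2)$ then yields $z^*=0$. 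Getting the distance computation $d(x_1^*,Y^\perp)=\|x_1^*\|$ to hold—which is where the norm-attainment at $y$ enters via $\|x_1^*\|=|x_1^*(y)|$ and $y\in S_Y\subseteq Y$ killing the $Y^\perp$ part—is the crux, and I would isolate it as the one genuine lemma-level computation before assembling the chain.
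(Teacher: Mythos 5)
Your proposal is correct, but it is organized differently from the paper's proof. The paper handles $(1)\Leftrightarrow(2)$ purely by citation (to Theorem~2.1 of the Daptari--Paul paper) and then links $(3)$ \emph{directly} to $(1)$ using only the definition of property-$(wU)$: if $x_1^*,x_2^*$ are as in $(3)$, then $x_1^*+x_2^*\in Y^\perp$ forces $x_1^*|_Y=-x_2^*|_Y=:y^*$, the common norm-attainment at $y$ forces $\|y^*\|=|y^*(y)|=\|x_1^*\|=\|x_2^*\|$, so $x_1^*$ and $-x_2^*$ are two norm-preserving extensions of the norm-attaining functional $y^*$ and property-$(wU)$ gives $x_1^*=-x_2^*$; the implication $(3)\Rightarrow(1)$ is the same computation read backwards, and no metric projection appears at all. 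You instead make everything pass through the metric-projection statement $(2)$: you prove $(1)\Leftrightarrow(2)$ from scratch via the standard bijection between norm-preserving extensions of $x^*|_Y$ and elements $x^*-z^*$ with $z^*\in P_{Y^\perp}(x^*)$, and you obtain $(3)$ from $(2)$ by checking that $z^*=x_1^*+x_2^*$ lies in $P_{Y^\perp}(x_1^*)$, the crux being $d(x_1^*,Y^\perp)=\|x_1^*|_Y\|=\|x_1^*\|$, which you correctly isolate; your contrapositive construction $x_1^*=x^*$, $x_2^*=z^*-x^*$ for $(3)\Rightarrow(2)$ also checks out (all three constraints of $(3)$ hold since $z^*(y)=0$). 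What each approach buys: yours is fully self-contained and unifies all three statements under one duality lemma, whereas the paper's treatment of $(3)$ is shorter and more elementary, at the cost of leaving $(1)\Leftrightarrow(2)$ to an external reference.
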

\begin{proof}
    $(1)\Lglra (2)$ follows from \cite[Theorem~2.1]{DP}.

    $(1)\Lgra (3):$ Let $x_1^*,x_1^*$ in $X^*$ with $\|x_1^*\|=\|x_2^*\|=|x_1^*(y)|=|x_2^*(y)|$ for some $y\in S_Y$ and $x_1^*+x_2^*\in Y^{\perp}$. Put $y^*=x_1^*|_Y=-x_2^*|_Y$. Clearly, $y^*$ attains norm on $S_Y$ and $x_1^*,-x_2^*$ are two norm-preserving extensions of $y^*$. Thus, $x_1^*=-x_2^*$ i.e., $x_1^*+x_2^*=0$.

     $(3)\Lgra (1):$ Let $y^*\in Y^*$ such that $\|y^*\|=|y^*(y)|$ for some $y\in S_Y$ and $x_1^*$, $x_2^*$ are two norm-preserving extensions of $y^*$. Clearly, $\|x_1^*\|=\|-x_2^*\|=|x_1^*(y)|=|-x_2^*(y)|$ and $x_1^*+(-x_2^*)\in Y^{\perp}$. Thus, $x_1^*+(-x_2^*)=0$ i.e., $x_1^*=x_2^*$. Hence, $Y$ has property-$(wU)$ in $X$.
\end{proof}
 We observe that property-$(U)$ is strictly stronger than property-$(wU)$.
\begin{thm}\label{WTP1}
	Let $(\Omega, \Sigma, \mu)$ be a probability space. Let $Y$ be a subspace of $X$ and suppose that $X^*$ has the RNP. Then, $Y$ has property-$(wU)$ in $X$ if and only if $L_1(\mu, Y)$ has property-$(wU)$ in $L_1(\mu, X)$.
\end{thm}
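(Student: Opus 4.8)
The plan is to pass to duals and use the metric-projection characterization of property-$(wU)$, i.e.\ part $(2)$ of the previous theorem. Since $X^*$ has the RNP, we have the isometric identification $L_1(\mu,X)^*=L_\infty(\mu,X^*)$, where $F\in L_\infty(\mu,X^*)$ acts by $\langle F,f\rangle=\int_\Omega \langle F(\omega),f(\omega)\rangle\,d\mu$ and $\|F\|=\operatorname{ess\,sup}_\omega\|F(\omega)\|$. Testing against functions of the form $\mathbf 1_A\,y$ with $A\in\Sigma$, $y\in Y$, one checks that under this identification $L_1(\mu,Y)^\perp=L_\infty(\mu,Y^\perp)$. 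Writing $\mathcal X=L_1(\mu,X)$ and $\mathcal Y=L_1(\mu,Y)$, the task reduces to showing that $P_{Y^\perp}(x^*)=0$ for every norm-attaining $x^*$ as in $(2)$ if and only if $P_{\mathcal Y^\perp}(F)=0$ for every $F\in\mathcal X^*$ with $\|F\|=|\langle F,g\rangle|$ for some $g\in S_{\mathcal Y}$.

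For the implication ``$\mathcal Y$ has $(wU)\Rightarrow Y$ has $(wU)$'' I would use constant functions, which is exactly where the probability-space hypothesis enters. Given $x^*\in X^*$ and $y\in S_Y$ with $\|x^*\|=|x^*(y)|$, set $F\equiv x^*$ and $g\equiv y$. Because $\mu(\Omega)=1$ we get $\|g\|_{\mathcal Y}=\|y\|=1$, while $\langle F,g\rangle=x^*(y)$ and $\|F\|=\|x^*\|$, so $F$ attains its norm at $g\in S_{\mathcal Y}$. By hypothesis and $(2)$, $P_{\mathcal Y^\perp}(F)=0$; in particular $d(F,\mathcal Y^\perp)=\|F\|$. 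If now $w\in P_{Y^\perp}(x^*)$, then $\|x^*-w\|=d(x^*,Y^\perp)=\|x^*\|$ (the last equality since $x^*\in Y^\#$), so the constant function $W\equiv w$ lies in $\mathcal Y^\perp$ and satisfies $\|F-W\|=\|x^*-w\|=\|F\|=d(F,\mathcal Y^\perp)$; hence $W\in P_{\mathcal Y^\perp}(F)=\{0\}$, forcing $w=0$. Thus $P_{Y^\perp}(x^*)=0$ and $Y$ has $(wU)$.

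For the converse ``$Y$ has $(wU)\Rightarrow\mathcal Y$ has $(wU)$'' I would start from $F\in\mathcal X^*$ with $\|F\|=|\langle F,g\rangle|$, $g\in S_{\mathcal Y}$, and analyse the equality case in $|\langle F,g\rangle|\le\int\|F(\omega)\|\,\|g(\omega)\|\,d\mu\le\|F\|\int\|g(\omega)\|\,d\mu=\|F\|$. Equality forces, for a.e.\ $\omega$ with $g(\omega)\neq0$, both $\|F(\omega)\|=\|F\|$ and $|\langle F(\omega),g(\omega)\rangle|=\|F(\omega)\|\,\|g(\omega)\|$, i.e.\ $F(\omega)$ attains its norm at $g(\omega)/\|g(\omega)\|\in S_Y$. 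By $(wU)$ of $Y$ and $(2)$, this gives $P_{Y^\perp}(F(\omega))=\{0\}$ for a.e.\ such $\omega$, and one then wants to conclude $P_{\mathcal Y^\perp}(F)=0$.

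The main obstacle is precisely this last upgrade, from a pointwise uniqueness statement to the global statement $P_{\mathcal Y^\perp}(F)=\{0\}$ in the $\operatorname{ess\,sup}$ norm. Two points must be handled with care: first, one needs the distance formula $d(F,\mathcal Y^\perp)=\operatorname{ess\,sup}_\omega\,d(F(\omega),Y^\perp)$ together with a measurable selection of pointwise best approximations, for which the RNP of $X^*$ is the natural tool; second, one must control a candidate best approximation $H\in\mathcal Y^\perp$ on the set $\{g=0\}$, where the pointwise norm-attainment argument gives no direct information. Establishing that every $H\in P_{\mathcal Y^\perp}(F)$ must vanish almost everywhere --- in particular off the support of $g$ --- is the delicate heart of the proof and the step I would expect to require the most work.
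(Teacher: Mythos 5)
The half of your argument that you completed is fine: the constant-function proof that property-$(wU)$ of $\mathcal{Y}=L_1(\mu,Y)$ in $\mathcal{X}=L_1(\mu,X)$ implies property-$(wU)$ of $Y$ in $X$ is correct (and, notably, this is the one direction the paper's proof does not address at all), and so is your pointwise equality-case analysis in the other direction, which gives $\|F(\omega)\|=\|F\|$ and norm-attainment of $F(\omega)$ at $g(\omega)/\|g(\omega)\|$ for a.e.\ $\omega\in\{g\neq0\}$. However, the step you left open --- showing that every $H\in P_{\mathcal{Y}^\perp}(F)$ vanishes a.e.\ on $\{g=0\}$ --- is not a technical hurdle awaiting a measurable-selection or RNP argument: it is false, and with it the implication ``$Y$ has $(wU)$ $\Rightarrow$ $\mathcal{Y}$ has $(wU)$'' fails as stated.

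Concretely, let $\mu$ be Lebesgue measure on $[0,1]$, $X=(\mathbb{R}^2,\|\cdot\|_2)$ and $Y=\mathbb{R}\times\{0\}$; since $X^*$ is strictly convex, $Y$ has property-$(U)$, hence property-$(wU)$, in $X$. In $\mathcal{X}^*=L_\infty(\mu,X^*)$ put $F_b(t)=\bigl(\mathbf{1}_{[0,1/2]}(t),\,b(t)\bigr)$, where $b$ is any measurable function with $|b|\le1$ vanishing on $[0,1/2]$. Every $F_b$ restricts on $\mathcal{Y}$ to one and the same functional $\Lambda$, which has norm one and attains it at $g=(2\cdot\mathbf{1}_{[0,1/2]},0)\in S_{\mathcal{Y}}$; moreover $\|F_b\|=1$, because $\|F_b(t)\|_2=1$ on $[0,1/2]$ and $\|F_b(t)\|_2=|b(t)|\le1$ on $(1/2,1]$. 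So the norm-attaining functional $\Lambda$ has a continuum of norm-preserving extensions, i.e.\ $\mathcal{Y}$ fails property-$(wU)$ in $\mathcal{X}$; equivalently, $H=(0,b)$ is a nonzero element of $P_{\mathcal{Y}^\perp}(F_0)$ supported in $\{g=0\}$ --- exactly the phenomenon you feared. The paper's own proof breaks at this same point: it tacitly assumes that any norm-preserving extension $\Gamma$ of $\Lambda$ must have $\Gamma(t)$ equal to the unique norm-preserving extension of $\Lambda(t)$ a.e., but on $\{t:f(t)=0\}$ the constraint $\|\Gamma\|_\infty=\|\Lambda\|_\infty$ forces nothing, since there $\|\Lambda(t)\|$ may be smaller than $\|\Lambda\|_\infty$ and $\Lambda(t)$ need not attain its norm, so property-$(wU)$ of $Y$ cannot even be invoked. (The same example contradicts the paper's Example~2.3, which asserts that $L_1(\mu)$ has property-$(wU)$ in $L_1(\mu,(\mathbb{R}^2,\|\cdot\|_2))$ for non-atomic $\mu$.) Your pointwise argument does prove uniqueness of the extension for those $\Lambda$ attaining their norm at some $g$ with $g(\omega)\neq0$ a.e., but property-$(wU)$ demands it for all norm-attaining $\Lambda$, and that stronger statement is simply not true.
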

\begin{proof}
Let $\Lambda\in L_1(\mu,Y)^*\cong L_\iy(\mu,Y^*)$ such that $\|\Lambda\|=1$ and $\Lambda(f)=1$ for some $f\in S_{L_1(\mu,Y)}$. Clearly, $\Lambda(t)(f(t))\leq \|\Lambda(t)\|\|f(t)\|$ fot $t\in \Omega$. Since $\Lambda\in S_{L_\iy(\mu,Y^*)}$, we have that $\Lambda(t)(f(t))\leq\|f(t)\|$. Now, $\|f(t)\|-\Lambda(t)(f(t))\geq 0$ and $\int_{\Omega}\|f(t)\|d\mu(t)=1=\int_{\Omega}\Lambda(t)(f(t))d\mu(t)$.  Hence, $\Lambda(t)(f(t))=\|f(t)\|$ a.e. Therefore, $\Lambda$ has a unique norm-preserving extension $\Gamma$ over $L_1(\mu,X)$, where $\Gamma(t)$ is the unique norm-preserving extension of $\Lambda(t)$ over $X$ a.e.
\end{proof}
\begin{ex}
It is clear from Theorem~\ref{WTP1} that $L_1(\mu)$ has property-$(wU)$ in $L_1(\mu,(\mathbb{R}^2,\|.\|_2))$ but $L_1(\mu)$ does not have property-$(U)$ in $L_1(\mu,(\mathbb{R}^2,\|.\|_2))$ for a non-atomic measure $\mu$ (see \cite[Example~2.3]{BR}).
\end{ex}
We now present a circumstance in the infinite-dimension cases where property-$(wU)$ is identical to property-$(U)$.

A subspace $Y$ of $X$ is said to have the $2$-ball property if for any $x,y\in X$, $r_1, r_2>0$ satisfying $\|x-y\|<r_1+r_2$ and $B[x,r_2]\cap Y\neq\es$; then, $B[y,r_1]\cap B[x,r_2]\cap Y\neq\es$. In particular, if $y\in Y$; then, $Y$ of $X$ is said to have the $1\frac{1}{2}$-ball property (see \cite{Y1}).

\begin{thm}
	Let $Y$ be a subspace of $X$ with $1\frac{1}{2}$-ball property. Then,
	$Y$ has property-$(wU)$ in $X$ if and only if $Y$ has property-$(U)$ in X.
\end{thm}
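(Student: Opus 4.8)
The plan is to prove only the non-trivial implication, since property-$(U)$ trivially implies property-$(wU)$ (the functionals attaining their norm on $S_Y$ form a subclass of all functionals on $Y$). So assume $Y$ has property-$(wU)$ together with the $1\frac{1}{2}$-ball property, and aim to deduce property-$(U)$. By Phelps' characterization (\cite{P}), property-$(U)$ is equivalent to the metric projection $P_{Y^\perp}$ being single-valued on all of $X^*$, so the entire problem reduces to showing $P_{Y^\perp}(x^*)$ is a singleton for every $x^*\in X^*$. Two facts are available for free: $Y^\perp$ is always proximinal in $X^*$ (a best approximant to $x^*$ from $Y^\perp$ is exactly $x^*$ minus a Hahn--Banach extension of $x^*|_Y$, so $d(x^*,Y^\perp)=\|x^*|_Y\|$ is attained), and, for each $x^*$, the best approximants $z\in P_{Y^\perp}(x^*)$ are in bijection with the norm-preserving extensions $x^*-z$ of $y^*:=x^*|_Y$.

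First I would isolate a dense set of good points. Put $D=\{x^*\in X^*: x^*|_Y \text{ attains its norm on } S_Y\}$. By Bishop--Phelps the norm-attaining functionals are dense in $Y^*$, and the restriction map $R\colon X^*\to Y^*$ is a metric surjection (Hahn--Banach), hence open; therefore $D=R^{-1}(\mathrm{NA}(Y))$ is norm-dense in $X^*$. For $x^*\in D$ the restriction $y^*=x^*|_Y$ is norm-attaining, so property-$(wU)$ (equivalently, condition~$(2)$ of the characterization of property-$(wU)$ established above) forces $y^*$ to have a unique norm-preserving extension; by the bijection just noted this says precisely that $P_{Y^\perp}(x^*)$ is a singleton. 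Thus $P_{Y^\perp}$ is single-valued on the dense set $D$, using property-$(wU)$ alone.

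It remains to promote single-valuedness from $D$ to all of $X^*$, and this is where the $1\frac{1}{2}$-ball property must enter; I expect this to be the main obstacle. The mechanism I would use is lower semicontinuity of the metric projection: if $P_{Y^\perp}$ is sequentially lower semicontinuous, then for arbitrary $x^*$ I choose $x_n^*\to x^*$ with $x_n^*\in D$, and lower semicontinuity forces each $z\in P_{Y^\perp}(x^*)$ to be the limit of the (necessarily unique) points $z_n\in P_{Y^\perp}(x_n^*)$; since the single sequence $(z_n)$ then converges to every $z\in P_{Y^\perp}(x^*)$, uniqueness of limits makes $P_{Y^\perp}(x^*)$ a singleton, yielding property-$(U)$. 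The concrete lever from the hypothesis is the stability estimate for best approximations: unwinding the $1\frac{1}{2}$-ball property with $r_2=d(x,Y)$ shows $d(y,P_Y(x))\le \|x-y\|-d(x,Y)$ for all $y\in Y$, the quantitative strong proximinality that underlies such continuity. The delicate point --- the step requiring the most care --- is transferring this primal stability of $P_Y$ in $X$ to the lower semicontinuity of $P_{Y^\perp}$ in $X^*$ that the argument consumes; this is exactly where the example $L_1(\mu)\subseteq L_1(\mu,(\mathbb{R}^2,\|\cdot\|_2))$ must fail, since there property-$(wU)$ holds while property-$(U)$ does not, precisely because the $1\frac{1}{2}$-ball property is absent.
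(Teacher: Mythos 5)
Your reduction is correct as far as it goes, but it is not a proof: it stops exactly where the theorem begins. The parts you actually establish --- the bijection between $P_{Y^\perp}(x^*)$ and the norm-preserving extensions of $x^*|_Y$, the Bishop--Phelps plus open-quotient-map argument showing that $D=\{x^*\in X^*: x^*|_Y \mbox{ attains its norm}\}$ is dense in $X^*$, and the fact that property-$(wU)$ makes $P_{Y^\perp}$ single-valued on $D$ --- use only property-$(wU)$ and are routine. The mechanism ``lower semicontinuous and single-valued on a dense set implies single-valued everywhere'' is also sound. What is missing is the one step that carries all the weight: a proof that the $1\frac{1}{2}$-ball property of $Y$ in $X$ yields lower semicontinuity of the set-valued metric projection $P_{Y^\perp}$ on $X^*$. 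This is not a routine transfer. The hypothesis is a statement about intersections of balls in $X$ (equivalently, a strong-proximinality estimate for $P_Y$ in the primal), whereas what your argument consumes is a continuity property of a set-valued map on the dual; you offer no bridge between the two and explicitly flag it as ``the delicate point'' left to be handled. Since this is precisely where the hypothesis must enter (without it the statement is false, as the example of $L_1(\mu)$ in $L_1(\mu,(\mathbb{R}^2,\|\cdot\|_2))$ shows), the proposal as written is a correct reduction followed by an unproven conjecture, and the conjecture may well be as hard as --- or harder than --- the theorem itself.

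For comparison, the paper's proof never passes to continuity of dual metric projections. It inspects the construction in \cite[Theorem~4]{Y2}, where Yost shows that the $1\frac{1}{2}$-ball property together with property-$(U)$ yields the $2$-ball property via a separating functional $f$; the observation is that this $f$ is automatically norm-attaining on the subspace, so the only instance of unique extension ever invoked is for norm-attaining functionals, i.e.\ property-$(wU)$ suffices to run Yost's argument. This gives the $2$-ball property, which in turn implies property-$(U)$ by \cite{Y2}. If you wish to salvage your route, the precise lemma you must supply is: the $1\frac{1}{2}$-ball property implies $P_{Y^\perp}$ is lower semicontinuous on $X^*$; alternatively, you could bypass continuity and argue directly, as the paper does, inside Yost's ball-intersection construction where property-$(wU)$ can be applied to the norm-attaining functional it produces.
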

\begin{proof}
	Suppose that $Y$ has $1\frac{1}{2}$-ball property and also has property-$(wU)$ in $X$. An identical construction as stated in \cite[Theorem~4]{Y2} to prove $(ii)\Lgra (iii)$ also leads to an $f$ that is norm-attaining in the subspace and separates
	the set $A$ from $x_0$. Thus, $Y$ has $2$-ball property in $X$ and, hence, has property-$(U)$ in $X$ (see \cite{Y2}). The converse is trivial.
\end{proof}
Analog to property-$(U)$ (see \cite[Theorem~5.1]{DPR} ), all that is needed to demonstrate that a subspace has property-$(wU)$ is to make sure that the subspace has property-(wU) in each of its superspaces when it is contained as a hyperplane.

\begin{thm}
	Let $Y$, $Z$ be subspaces of $X$, where $\overline{Z}=X$. Then, $Y$ has property-$(wU)$ in $\textrm{span}\{Y\cup \{z\}\}$ for all $z\in Z$ if and only if $Y$ has property-$(wU)$ in $X$.
\end{thm}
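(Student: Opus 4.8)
The plan is to prove both implications by tracing through the defining condition of property-$(wU)$: that every norm-attaining $y^* \in Y^*$ admits a unique norm-preserving extension. Throughout, for $z \in Z$ I write $W_z := \mathrm{span}\{Y \cup \{z\}\} = Y + \mathbb{R}z$, which is a closed subspace since $Y$ is closed and only a single vector is adjoined.

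First I would dispatch the implication that property-$(wU)$ in $X$ forces property-$(wU)$ in each $W_z$. Fix $z \in Z$, let $y^* \in Y^*$ be norm-attaining, and suppose $w_1^*, w_2^* \in W_z^*$ are two norm-preserving extensions of $y^*$. Extending each $w_i^*$ to $X$ by the Hahn-Banach theorem produces $x_i^* \in X^*$ with $\|x_i^*\| = \|w_i^*\| = \|y^*\|$, so $x_1^*, x_2^*$ are norm-preserving extensions of $y^*$ to $X$; property-$(wU)$ in $X$ gives $x_1^* = x_2^*$, and restricting to $W_z$ yields $w_1^* = w_2^*$.

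For the converse — the substantive direction — assume $Y$ has property-$(wU)$ in every $W_z$, fix a norm-attaining $y^* \in Y^*$ with $\|y^*\| = |y^*(y_0)|$, $y_0 \in S_Y$, and let $x_1^*, x_2^*$ be two norm-preserving extensions of $y^*$ to $X$. Since $\overline{Z} = X$ and each $x_i^*$ is continuous, it suffices to show $x_1^*(z) = x_2^*(z)$ for every $z \in Z$. Fixing such a $z$ and putting $w_i^* = x_i^*|_{W_z}$, the chain $\|y^*\| = \|x_i^*|_Y\| \le \|w_i^*\| \le \|x_i^*\| = \|y^*\|$ shows $\|w_i^*\| = \|y^*\|$, so both $w_i^*$ are norm-preserving extensions of $y^*$ to $W_z$. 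As $y_0 \in S_Y \subseteq S_{W_z}$, the functional $y^*$ still attains its norm inside $W_z$, whence property-$(wU)$ of $Y$ in $W_z$ gives $w_1^* = w_2^*$ and in particular $x_1^*(z) = x_2^*(z)$. Letting $z$ range over $Z$ and invoking density finishes the argument.

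Since the whole scheme reduces the global uniqueness in $X$ to uniqueness across the one-dimensional enlargements $W_z$, no individual step is difficult; the step demanding the most care is the norm-preservation estimate $\|w_i^*\| = \|y^*\|$, which is exactly what certifies that the restricted functionals meet the hypothesis of property-$(wU)$ in $W_z$, together with the remark that the norm-attainment of $y^*$ is an intrinsic property of $y^*$ and so passes automatically to every superspace.
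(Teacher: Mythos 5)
Your proof is correct and takes essentially the approach the paper intends: the paper's proof is just a pointer to the analogous argument for property-$(U)$ in \cite[Theorem~5.1]{DPR}, and your argument --- restricting the two extensions to $W_z=\textrm{span}\{Y\cup\{z\}\}$, checking they remain norm-preserving, invoking uniqueness there, and concluding by density of $Z$, plus Hahn--Banach extension for the easy direction --- is precisely that adaptation, with the key remark (which you state explicitly) that norm-attainment of $y^*$ on $S_Y$ is intrinsic to $y^*$ and hence carries over unchanged.
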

\begin{proof}
    A similar proof of [7, Theorem 5.1] works here.
\end{proof}
Our next observations are straightforward and follow from the definition of property-$(wU)$.
\begin{prop}
Let $Z$ and $Y$ be subspaces of $X$ such that $Z\ci Y\ci X$. If $Z$ has property-$(wU)$ in $X$, then $Z$ has property-$(wU)$ in $Y$.
\end{prop}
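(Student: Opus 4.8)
The plan is to exploit the transitivity of norm-preserving extensions: any norm-preserving extension of a functional to $Y$ can be pushed all the way up to $X$ by Hahn-Banach, and the hypothesis that $Z$ has property-$(wU)$ in $X$ forces uniqueness at the top level, which then descends to $Y$ by restriction.

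First I would fix $z^*\in Z^*$ with $\|z^*\|=|z^*(z_0)|$ for some $z_0\in S_Z$, and suppose toward the uniqueness that $y_1^*,y_2^*\in Y^*$ are two norm-preserving extensions of $z^*$ to $Y$; the goal is to show $y_1^*=y_2^*$. Next I would invoke the Hahn-Banach theorem to extend each $y_i^*$ to a functional $x_i^*\in X^*$ with $\|x_i^*\|=\|y_i^*\|$. Since each $y_i^*$ restricts to $z^*$ on $Z$ and $\|y_i^*\|=\|z^*\|$, each $x_i^*$ satisfies $x_i^*|_Z=z^*$ and $\|x_i^*\|=\|z^*\|$; that is, $x_1^*$ and $x_2^*$ are both norm-preserving extensions of $z^*$ over $X$.

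Because $z^*$ attains its norm at $z_0\in S_Z$, a condition intrinsic to $Z$ and unaffected by the ambient space, the hypothesis that $Z$ has property-$(wU)$ in $X$ applies to $z^*$ and yields $x_1^*=x_2^*$. Restricting back to $Y$ then gives $y_1^*=x_1^*|_Y=x_2^*|_Y=y_2^*$, so $z^*$ has a unique norm-preserving extension to $Y$, and hence $Z$ has property-$(wU)$ in $Y$.

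The argument is essentially bookkeeping, and there is no substantial obstacle. The only points demanding a moment's care are that the norm-attainment hypothesis $\|z^*\|=|z^*(z_0)|$ transfers verbatim when property-$(wU)$ of $Z$ in $X$ is invoked (immediate, since norm-attainment of $z^*$ on $S_Z$ makes no reference to $Y$ or $X$), and that the Hahn-Banach extensions genuinely preserve the norm down to $\|z^*\|$, so that $x_1^*$ and $x_2^*$ qualify as the pair to which property-$(wU)$ in $X$ is applied.
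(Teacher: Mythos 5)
Your proof is correct and is precisely the ``straightforward'' argument the paper has in mind (the paper states this proposition without proof, remarking that it follows directly from the definition): push the two extensions on $Y$ up to $X$ via Hahn--Banach, apply property-$(wU)$ of $Z$ in $X$ to conclude they coincide, and restrict back to $Y$. No gaps; the key observation that norm-attainment of $z^*$ on $S_Z$ is intrinsic to $Z$ is handled correctly.
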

\begin{prop}\label{TwU}
Let $Z$ and $Y$ be subspaces of $X$ such that $Z\ci Y\ci X$. If $Z$ has property-$(wU)$ in $Y$ and $Y$ has property-$(wU)$ in $X$, then $Z$ has property-$(wU)$ in $X$.
\end{prop}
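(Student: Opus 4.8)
The plan is to take a norm-attaining functional on $Z$ together with two norm-preserving extensions to $X$, and then descend through the intermediate space $Y$, invoking each hypothesis exactly once. Precisely, let $z^*\in Z^*$ satisfy $\|z^*\|=|z^*(z_0)|$ for some $z_0\in S_Z$, and suppose $x_1^*,x_2^*\in X^*$ are two norm-preserving extensions of $z^*$ over $X$. The goal is to conclude $x_1^*=x_2^*$.

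First I would restrict to $Y$. Set $y_i^*=x_i^*|_Y$ for $i=1,2$. Since $x_i^*$ extends $z^*$ and $\|x_i^*\|=\|z^*\|$, the restriction $y_i^*$ still extends $z^*$ and satisfies
$$\|z^*\|=\|y_i^*|_Z\|\leq\|y_i^*\|\leq\|x_i^*\|=\|z^*\|,$$
so $\|y_i^*\|=\|z^*\|$; that is, each $y_i^*$ is a norm-preserving extension of $z^*$ over $Y$. Moreover, because $z_0\in Z\ci Y$ and $y_i^*$ agrees with $z^*$ on $Z$, we get $|y_i^*(z_0)|=|z^*(z_0)|=\|z^*\|=\|y_i^*\|$ with $z_0\in S_Z\ci S_Y$.

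Now I would apply the two hypotheses in turn. Since $z^*$ attains its norm at $z_0\in S_Z$ and $Z$ has property-$(wU)$ in $Y$, the functional $z^*$ admits a unique norm-preserving extension over $Y$; hence $y_1^*=y_2^*$, and I denote this common functional by $y^*$. By the computation above, $\|y^*\|=|y^*(z_0)|$ with $z_0\in S_Y$, so $y^*$ is a norm-attaining functional on $Y$ of exactly the type governed by property-$(wU)$. Finally, both $x_1^*$ and $x_2^*$ restrict to $y^*$ on $Y$ and have norm $\|z^*\|=\|y^*\|$, so they are norm-preserving extensions of $y^*$ over $X$; since $Y$ has property-$(wU)$ in $X$ and $y^*$ attains its norm at $z_0\in S_Y$, such an extension is unique, whence $x_1^*=x_2^*$. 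This shows $Z$ has property-$(wU)$ in $X$.

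There is no serious obstacle here—the argument is a clean two-step telescoping. The only point requiring care, and the reason property-$(wU)$ transits rather than merely property-$(U)$, is verifying that the norm-attainment witness $z_0$ persists at each stage: it lies in $Z\ci Y\ci X$, so the single vector $z_0$ simultaneously certifies the norm-attainment condition for $z^*$ on $Z$, for $y^*$ on $Y$, and thereby legitimizes treating $x_1^*,x_2^*$ as extensions of a norm-attaining functional on $Y$. This is precisely what permits both hypotheses to be invoked.
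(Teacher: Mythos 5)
Your proof is correct and is precisely the ``straightforward'' definition-chasing argument the paper has in mind (the paper states this proposition without proof, remarking that it follows directly from the definition of property-$(wU)$). Your key observation---that the single witness $z_0\in S_Z\subseteq S_Y$ certifies norm-attainment at both stages, allowing each hypothesis to be invoked once---is exactly the point that makes the transitivity work.
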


\subsection{On property-$(HB)$}
We start this subsection with the following characterization of property-$(HB)$.

\begin{thm}
    Let $Y$ be a subspace of $X$. Then the following are equivalent:
    \bln
    \item $Y$ has property-$(HB)$ in $X$.
    \item $Y^{\#}$ is a closed subspace of $X^*$ and $X^*=Y^{\#}\oplus Y^{\perp}$ with $\|x^*\|\geq \|y^\perp\|$, whenever $x^*=y^\#+y^\perp$, $y^{\#}\in Y^{\#}$ and $y^\perp\in Y^\perp$.
    \el 
\end{thm}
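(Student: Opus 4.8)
The plan is to read property-$(HB)$ as property-$(SU)$ reinforced by the extra norm control $\|x^*\|\geq\|y^\perp\|$, and then to invoke Oja's characterization of property-$(SU)$ (quoted above from \cite{O}), namely that $Y$ has property-$(SU)$ in $X$ if and only if $Y^\#$ is a closed subspace of $X^*$ and $X^*=Y^\#\oplus Y^\perp$. Once property-$(SU)$ is available the direct-sum decomposition is automatic and the abstract complement $G$ appearing in the definitions is forced to coincide with $Y^\#$; so the whole argument reduces to tracking the two inequalities in the definition of property-$(HB)$.

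For $(1)\Lgra(2)$: I would assume $Y$ has property-$(HB)$ in $X$ with complement $G$. The defining condition contains the strict inequality $\|x^*\|>\|y^\#\|$, which is exactly the condition defining property-$(SU)$; hence $Y$ has property-$(SU)$ in $X$. Oja's theorem then gives that $Y^\#$ is a closed subspace and $X^*=Y^\#\oplus Y^\perp$, and we may take $G=Y^\#$, so each decomposition $x^*=y^\#+y^\perp$ has $y^\#\in Y^\#$. It then remains only to record the inequality $\|x^*\|\geq\|y^\perp\|$: for $y^\perp\neq 0$ this is precisely the second half of the property-$(HB)$ definition, while for $y^\perp=0$ it holds trivially since then $\|y^\perp\|=0$. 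This yields $(2)$.

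For $(2)\Lgra(1)$: I would assume $Y^\#$ is a closed subspace, $X^*=Y^\#\oplus Y^\perp$, and $\|x^*\|\geq\|y^\perp\|$ for every decomposition $x^*=y^\#+y^\perp$ with $y^\#\in Y^\#$ and $y^\perp\in Y^\perp$. By Oja's characterization the decomposition already forces $Y$ to have property-$(SU)$ in $X$ with complement $G=Y^\#$, so $\|x^*\|>\|y^\#\|$ holds whenever $y^\perp\neq 0$. Coupling this with the hypothesized inequality $\|x^*\|\geq\|y^\perp\|$ produces both defining inequalities of property-$(HB)$ simultaneously, with $G=Y^\#$, which is exactly $(1)$.

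There is no deep obstacle here; the one point that needs care is the bookkeeping that identifies the abstract complement $G$ in the definitions with the concrete set $Y^\#$ supplied by Oja's theorem, and the observation that the strict inequality coming from property-$(SU)$ and the non-strict inequality assumed in $(2)$ combine without conflict, together with the harmless boundary case $y^\perp=0$.
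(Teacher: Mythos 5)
Your architecture is viable, and in the forward direction it is genuinely different from the paper's; but the step you yourself flag as ``the one point that needs care''---taking $G=Y^\#$---is exactly the step you never carry out, and it is \emph{not} a formal consequence of Oja's theorem as quoted. That theorem says $Y^\#$ is a closed subspace and $X^*=Y^\#\oplus Y^\perp$; it does not say that the complement $G$ witnessing property-$(SU)$/$(HB)$ in the definition coincides with $Y^\#$ (a closed subspace of $X^*$ may have many complements), and both of your directions silently transfer the defining inequalities between decompositions relative to $G$ and decompositions relative to $Y^\#$. Since this identification is the load-bearing step of your whole argument, you must prove it; fortunately the proof is short. Suppose the strict inequality of property-$(SU)$ holds for decompositions relative to $G$. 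First, for $g\in G$ and $0\neq y^\perp\in Y^\perp$ the inequality gives $\|g-y^\perp\|>\|g\|$, so $d(g,Y^\perp)=\|g\|$; since $d(g,Y^\perp)=\|g|_Y\|$ (Hahn--Banach, via $Y^*\cong X^*/Y^\perp$), we get $g\in Y^\#$, i.e.\ $G\subseteq Y^\#$. Second, for $y^\#\in Y^\#$ write $y^\#=g+y^\perp$ with $g\in G$, $y^\perp\in Y^\perp$; then $\|y^\#\|=\|y^\#|_Y\|=\|g|_Y\|\leq\|g\|$, which contradicts $\|y^\#\|>\|g\|$ unless $y^\perp=0$; hence $Y^\#\subseteq G$. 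With $G=Y^\#$ established, both of your directions close exactly as you describe (and, incidentally, the forward direction then no longer needs Oja's theorem at all: closedness of $Y^\#$ and the decomposition come for free from $G$ being a closed complement of $Y^\perp$).

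It is worth seeing how the paper avoids this issue entirely by arguing complement-free. In the forward direction, from $d(x^*,Y^\perp)=\|x^*|_Y\|=\|y^\#\|=\|x^*-y^\perp\|$ it identifies $y^\perp$ as the best approximation $P_{Y^\perp}(x^*)$ (unique by property-$(U)$), and then invokes the metric-projection characterization of property-$(HB)$ from \cite[Theorem~3.4]{DP} to get $\|y^\perp\|=\|P_{Y^\perp}(x^*)\|\leq\|x^*\|$. In the backward direction it likewise does not assert ``$(SU)$ relative to $Y^\#$'' but derives the strict inequality straight from property-$(U)$: if $\|x^*\|=\|y^\#\|$ with $y^\perp\neq 0$, then $x^*$ and $y^\#$ would be two distinct norm-preserving extensions of $y^\#|_Y$. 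So the paper trades your identification lemma for an appeal to an external metric-projection theorem, while your route, once patched with the paragraph above, is more self-contained. As written, however, your proof has a genuine gap at its central step.
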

\begin{proof}
    Let $Y$ has property-$(HB)$ in $X$. It follows from \cite{O} that $Y^{\#}$ is a closed subspace of $X^*$ and $X^*=Y^{\#}\oplus Y^{\perp}$. Let $x^*=y^\#+y^\perp$, where $y^{\#}\in Y^{\#}$ and $y^\perp\in Y^\perp$. It remains to show $\|x^*\|\geq \|y^\perp\|$. Now, $d(x^*,Y^\perp)=\|x^*|_{Y}\|=\|y^\#\|=\|x^*-y^\perp\|$. Hence, $P_{Y^\perp}(x^*)=y^\perp$. By \cite[Theorem~3.4]{DP}, we have $\|y^\perp\|=\|P_{Y^\perp}(x^*)\|\leq \|x^*\|.$

    Conversely, let $Y^{\#}$ is a closed subspace of $X^*$ and $X^*=Y^{\#}\oplus Y^{\perp}$ with $\|x^*\|\geq \|y^\perp\|$, whenever $x^*=y^\#+y^\perp$, $y^{\#}\in Y^{\#}$ and $y^\perp\in Y^\perp$. It is enough to show that $\|x^*\|>\|y^\#\|$, whenever $y^\perp \neq 0$. Clearly, $\|x^*\|\geq \|x^*|_Y\|=\|y^\#\|$. If $\|x^*\|=\|y^{\#}\|$, then $x^*$ and $y^\#$ are two distinct norm preserving extensions of $y^{\#}|_Y$ as $y^\perp\neq 0$. It raises a contradiction as it follows from \cite{O} that $Y$ has property-$(SU)$ in $X$. Hence, $\|x^*\|>\|y^\perp\|$.
\end{proof}

Let us recall \cite[Theorem~5.1]{DPR}. We may derive the corresponding analog for property-$(HB)$.
\begin{thm}\label{TPH2}
	Let $X$ be a Banach space and $Y$, $Z$ be subspaces of $X$, where $\overline{Z}=X$. Then, $Y$ has property-$(HB)$ in $\textrm{span}\{Y\cup \{z\}\}$ for all $z\in Z$ if and only if $Y$ has property-$(HB)$ in $X$.
\end{thm}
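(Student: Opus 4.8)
The plan is to prove the two implications separately; the routine one is that global property-$(HB)$ descends to each intermediate subspace, and the substantial one is the localization, where the density $\overline{Z}=X$ does all the work. Throughout, for a subspace $W$ with $Y\subseteq W\subseteq X$ I write $Y^{\#}_{W}=\{w^*\in W^*:\|w^*\|=\|w^*|_Y\|\}$ and $Y^{\perp}_{W}=\{w^*\in W^*:w^*|_Y=0\}$ for the relative versions of $Y^\#$ and $Y^\perp$ inside $W^*$, and I set $W_z=\mathrm{span}\{Y\cup\{z\}\}$.

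For the direction ``property-$(HB)$ in $X$ $\Rightarrow$ property-$(HB)$ in each $W_z$'' I would prove the more general fact that property-$(HB)$ passes to every intermediate subspace $W$. First, property-$(HB)$ implies property-$(U)$, and property-$(U)$ descends to $W$: two norm-preserving extensions to $W$ of a given $y^*\in Y^*$ can each be extended norm-preservingly to $X$, where they must coincide, hence they already agree on $W$. Next, a short norm-squeeze using $Y\subseteq W\subseteq X$ shows that the restriction map sends $Y^\#$ onto $Y^{\#}_{W}$, so $Y^{\#}_{W}$ is a subspace and $W^*=Y^{\#}_{W}\oplus Y^{\perp}_{W}$, with uniqueness supplied by property-$(U)$ in $W$. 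Finally, for the norm inequality I would take $w^*\in W^*$, extend it to $x^*\in X^*$ with $\|x^*\|=\|w^*\|$, decompose $x^*=y^\#+y^\perp$ by property-$(HB)$ in $X$, and restrict: by local uniqueness this restriction is exactly the decomposition of $w^*$, so its $Y^{\perp}_{W}$-part has norm $\|y^\perp|_W\|\le\|y^\perp\|\le\|x^*\|=\|w^*\|$. The characterization of property-$(HB)$ established above (closedness of $Y^{\#}_W$ following from contractivity of the associated projection) then yields property-$(HB)$ in $W$.

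The real content is ``property-$(HB)$ in each $W_z$ $\Rightarrow$ property-$(HB)$ in $X$.'' Since property-$(HB)$ implies property-$(U)$, property-$(U)$ holds in each $W_z$, and by the property-$(U)$ localization \cite[Theorem~5.1]{DPR} it holds in $X$; thus every $x^*\in X^*$ has a unique decomposition $x^*=g(x^*)+P_{Y^\perp}(x^*)$ with $g(x^*)\in Y^\#$. The linchpin is that this global decomposition restricts, for each $z\in Z$, to the (unique, by local property-$(U)$) decomposition of $x^*|_{W_z}$ in $W_z^*$: indeed $P_{Y^\perp}(x^*)|_{W_z}\in Y^{\perp}_{W_z}$, and the norm-squeeze $Y\subseteq W_z\subseteq X$ gives $g(x^*)|_{W_z}\in Y^{\#}_{W_z}$. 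For the norm condition I would then estimate, for each $z\in Z$,
$$|P_{Y^\perp}(x^*)(z)|\le \|P_{Y^\perp}(x^*)|_{W_z}\|\,\|z\|\le \|x^*|_{W_z}\|\,\|z\|\le\|x^*\|\,\|z\|,$$
where the middle inequality is the norm condition of property-$(HB)$ in $W_z$ applied to the local $Y^{\perp}_{W_z}$-part; taking the supremum over $z\in Z$ and using $\overline{Z}=X$ gives $\|P_{Y^\perp}(x^*)\|\le\|x^*\|$.

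For the linearity of $Y^\#$ I would take $g_1,g_2\in Y^\#$; local property-$(SU)$ (a consequence of local property-$(HB)$) makes $Y^{\#}_{W_z}$ a subspace, so $(g_1+g_2)|_{W_z}\in Y^{\#}_{W_z}$, i.e.\ $\|(g_1+g_2)|_{W_z}\|=\|(g_1+g_2)|_Y\|$ for every $z\in Z$; again $\overline{Z}=X$ forces $\|g_1+g_2\|=\|(g_1+g_2)|_Y\|$, so $g_1+g_2\in Y^\#$ (closure under scalars is immediate). Hence $Y^\#$ is a subspace, property-$(U)$ upgrades this to the direct sum $X^*=Y^\#\oplus Y^\perp$ (so $P_{Y^\perp}$ is a contractive linear projection and $Y^\#=\ker P_{Y^\perp}$ is closed), and together with $\|P_{Y^\perp}(x^*)\|\le\|x^*\|$ the characterization of property-$(HB)$ established above gives property-$(HB)$ in $X$. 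I expect the main obstacle to be precisely this linearity of $Y^\#$, since it is a genuinely global algebraic condition rather than a pointwise one; it is resolved by writing $\|g_1+g_2\|$ as a supremum over the dense set $Z$ and transferring the local linearity through the restriction-of-decomposition observation, which is itself the crux and is exactly where local property-$(U)$ (uniqueness in $W_z^*$) is indispensable.
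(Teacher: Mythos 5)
Your proof is correct, and for the substantive direction (property-$(HB)$ in every $\mathrm{span}\{Y\cup\{z\}\}$ implies property-$(HB)$ in $X$) it runs on the same engine as the paper's proof: restrict the global decomposition $x^*=y^\#+y^\perp$ to $W_z$, observe via the norm-squeeze that this restriction is the unique local decomposition, apply the local $(HB)$ norm condition there, and let the density of $Z$ finish. The paper phrases this as a contradiction with a single well-chosen $z_0\in S_Z$ satisfying $\|x^*\|<|y^\perp(z_0)|$; your direct supremum over $z\in Z$ is the same estimate. Where you genuinely diverge is in what is taken from citations. The paper extracts full property-$(SU)$ in $X$ (hence linearity and closedness of $Y^\#$) from \cite[Theorem~5.1]{DPR}, whereas you invoke that theorem only for property-$(U)$ and then manufacture linearity of $Y^\#$ by hand, pushing $(g_1+g_2)|_{W_z}\in Y^{\#}_{W_z}$ (local $(SU)$) through the density of $Z$, with closedness coming from contractivity of the now-linear $P_{Y^\perp}$; this makes your argument robust against whether the cited localization theorem is stated for $(SU)$ or only for $(U)$ (the paper itself describes it as the property-$(U)$ analogue). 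Likewise, for the easy direction the paper simply cites \cite[Theorem~3.16]{DP}, while you prove heredity to intermediate subspaces directly (restriction maps $Y^\#$ onto $Y^{\#}_{W}$, decomposition by extension-and-restriction, uniqueness from property-$(U)$ in $W$); that argument is sound. Both routes legitimately conclude via the paper's characterization of property-$(HB)$ ($Y^\#$ a closed subspace, $X^*=Y^\#\oplus Y^\perp$, and $\|x^*\|\geq\|y^\perp\|$), which supplies the remaining strict inequality $\|x^*\|>\|y^\#\|$ through Oja's theorem.
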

\begin{proof}
Suppose that $Y$ has property-$(HB)$ in $\textrm{span}\{Y\cup \{z\}\}$ for all $z\in Z$. By \cite[Theorem~5.1]{DPR}, $Y$ has property-$(SU)$ in $X$. We assume that $Y$ does not have property-$(HB)$ in $X$. Let $x^*\in X^*$ be such that $x^*=y^\#+y^\perp$, where $y^\#\in Y^\#$ and $y^\perp\in Y^\perp$. Also we assume that $\|x^*\|<\|y^\perp\|$. Choose $z_0\in S_Z$ such that $\|x^*\|<|y^\perp(z_0)|$. Consider $W=\textrm{span}\{Y\cup \{z_0\}\}$, then $x^*|_W\in W^*,y^\#|_W\in Y_{W^*}^\#,y^\perp|_W\in Y_{W^*}^\perp$.
Clearly, $\|x^*|_W\|\leq \|x^*\|<|y^\perp(z_0)|\leq \|y^\perp|_W\|$ and hence we get a contradiction.

The converse follows from \cite[Theorem~3.16]{DP}.
\end{proof}
Proposition~\ref{TwU} states that property-$(wU)$ is transitive. From the definition, we can see that the property-$(U)$ is also transitive. \cite[Proposition~3.15.b]{DP} conveys the same for peoperty-$(SU)$. But,  we don't know whether property-(HB) is transitive. However, we have the following observation in the direction.
\begin{prop}\label{TPH1}
Let $M$ be an $M$-summmand in $X$. Suppose that $Z$ has property-$(HB)$ in $M$. Then $Z$ has property-$(HB)$ in $X$.
\end{prop}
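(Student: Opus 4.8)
The plan is to exploit the $M$-summand structure to transfer the dual-decomposition characterization of property-$(HB)$ (the theorem proved just above for $Y$ in $X$, with $Y=Z$) from $M$ to $X$. Write $X=M\oplus_{\infty}N$, so that the adjoint $P^*$ of the associated $M$-projection $P$ is an $L$-projection and $X^*=N^\perp\oplus_{1}M^\perp$ is an $L$-decomposition. Moreover, restriction to $M$ yields an isometric isomorphism $r:N^\perp\to M^*$ (with inverse $m^*\mapsto m^*\circ P$), since for $f\in N^\perp$ one has $\|f\|=\sup_{\|m\|\le 1,\,\|n\|\le 1}|f(m+n)|=\|f|_M\|$. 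Consequently every $x^*\in X^*$ splits uniquely as $x^*=f+g$ with $f\in N^\perp$, $g\in M^\perp$, and $\|x^*\|=\|f\|+\|g\|=\|f|_M\|+\|g\|$.

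Next I would identify $Z^\#$ and $Z^\perp$ computed in $X^*$ in terms of the corresponding sets $Z^{\#}_M$, $Z^{\perp}_M$ in $M^*$. Since $Z\ci M$, every $g\in M^\perp$ already vanishes on $Z$, so for $x^*=f+g$ we have $x^*|_Z=f|_Z$. Using $\|f\|=\|f|_M\|\ge\|f|_Z\|$ together with the $\ell_1$-additivity $\|x^*\|=\|f\|+\|g\|$, the equality $\|x^*\|=\|x^*|_Z\|$ forces $g=0$ and $\|f|_M\|=\|f|_Z\|$; hence $Z^\#=r^{-1}(Z^{\#}_M)$, an isometric copy of $Z^{\#}_M$ lying inside the $N^\perp$-factor. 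Likewise $Z^\perp=\{f\in N^\perp:f|_M\in Z^{\perp}_M\}\oplus_{1}M^\perp$. In particular $Z^\#$ is a closed subspace of $X^*$, because $Z^{\#}_M$ is closed in $M^*$ (part of the hypothesis that $Z$ has property-$(HB)$ in $M$) and $r$ is an isometry onto the closed $L$-summand $N^\perp$ of $X^*$.

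Then I would verify the clauses of the characterization of property-$(HB)$ for $Z$ in $X$. For the decomposition $X^*=Z^\#\oplus Z^\perp$: given $x^*=f+g$, apply property-$(HB)$ of $Z$ in $M$ to split $f|_M=a+b$ with $a\in Z^{\#}_M$ and $b\in Z^{\perp}_M$; lifting through $r^{-1}$ gives $f=\tilde a+\tilde b$ and hence $x^*=\tilde a+(\tilde b+g)$ with $\tilde a\in Z^\#$ and $\tilde b+g\in Z^\perp$, while $Z^\#\cap Z^\perp=\{0\}$ follows from $Z^{\#}_M\cap Z^{\perp}_M=\{0\}$. For the norm clause, the $L$-additivity gives $\|\tilde b+g\|=\|b\|+\|g\|$ and $\|x^*\|=\|a+b\|+\|g\|$, so the required inequality $\|x^*\|\ge\|\tilde b+g\|$ collapses to $\|a+b\|\ge\|b\|$, which is exactly the norm clause of property-$(HB)$ for $Z$ in $M$. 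Invoking the characterization theorem then delivers property-$(HB)$ of $Z$ in $X$.

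I expect the main obstacle to be the first identification step, namely showing that $Z^\#$ computed in $X^*$ carries no $M^\perp$-component, i.e. sits entirely inside $N^\perp$. This is precisely where the $M$-summand hypothesis (rather than a mere $M$-ideal) is essential: it is the strict $\ell_1$-additivity $\|x^*\|=\|f\|+\|g\|$ combined with $\|f\|=\|f|_M\|$ that forces $g=0$ in any norming decomposition, and it is the same additivity that makes all the norm bookkeeping in the final inequality reduce cleanly to the corresponding statement inside $M$.
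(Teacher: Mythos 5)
Your proposal is correct, and it reaches the conclusion by a genuinely different (though structurally parallel) route from the paper's. The paper argues through metric projections: writing $X^*=M^*\bigoplus_{\ell_1}N^*$ with the $L$-projection $P:X^*\ra M^*$, it composes $P$ with the linear, norm-one metric projection $P_{Z^\perp}:M^*\ra Z^\perp$ that property-$(HB)$ of $Z$ in $M$ supplies via \cite[Theorem~3.4]{DP}, checks $\|P_{Z^\perp}\circ P\|=\|I-P_{Z^\perp}\circ P\|=1$, and then cites \cite[Theorem~3.4]{DP} again to conclude. You instead verify the dual-decomposition criterion proved immediately before the proposition ($Z^\#$ a closed subspace, $X^*=Z^\#\oplus Z^\perp$, and the $Z^\perp$-component dominated in norm), transporting $Z^\#$ and $Z^\perp$ from $M^*$ into $X^*$ through the restriction isometry $r:N^\perp\ra M^*$. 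The engine is the same in both cases --- the $\ell_1$-additivity of $X^*=N^\perp\bigoplus_{\ell_1}M^\perp$ --- and your inequality $\|a+b\|\geq\|b\|$ is exactly the estimate hiding inside the paper's bound on $\|I-P_{Z^\perp}\circ P\|$. What your version buys is, first, self-containedness (it invokes the paper's own characterization theorem rather than the external reference), and second, a more scrupulous identification of the annihilator of $Z$ in $X^*$: your formula, namely $r^{-1}$ of the annihilator of $Z$ in $M^*$ summed $\ell_1$-wise with $M^\perp$, makes explicit that this annihilator contains all of $M^\perp$. The paper's composed map $P_{Z^\perp}\circ P$ kills $M^\perp$, so its range is only the copy of the $M^*$-annihilator sitting inside $N^\perp$, not the full annihilator of $Z$ in $X^*$; the map that is literally the metric projection there is $P_{Z^\perp}\circ P+(I-P)$, for which the paper's norm estimates run verbatim. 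Your explicit bookkeeping sidesteps that slip, at the cost of a somewhat longer verification; the paper's approach, in exchange, is shorter and never needs to describe $Z^\#$ or $Z^\perp$ inside $X^*$ at all.
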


\begin{proof}
Since $M$ be an $M$-summmand in $X$, there exists a subspace $N$ of $X$ such that $X=M\bigoplus_{\ell_\iy} N$. Hence, we have that $X^*=M^*\bigoplus_{\ell_1} N^*$. Let $P:X^*\ra M^*$ be the linear projection such that $\|x^*\|=\|P(x^*)\|+\|x^*-P(x^*)\|$. Let $P_{Z^\perp}:M^*\ra Z^\perp$ be the metric projection. By \cite[Theorem~3.4]{DP}, $P_{Z^\perp}$ is a linear projection of norm-$1$. Clearly, $P_{Z^\perp}\circ P:X^*\ra Z^\perp$ is a norm-$1$ projection. We now show that $\|I-P_{Z^\perp}\circ P\|=1$. Let $x^*\in X^*$, then
\begin{align*}
\|(I-P_{Z^\perp}\circ P)(x^*)\|&=\|x^*-P(x^*)+P(x^*)-P_{Z^\perp}\circ P(x^*)\|\\
&\leq \|x^*-P(x^*)\|+\|I-P_{Z^\perp}\|\|P(x^*)\|\leq \|x^*\|.
\end{align*}
Hence, $\|I-P_{Z^\perp}\circ P\|=1$. Now, \cite[Theorem~3.4]{DP} ensures that $P_{Z^\perp}\circ P:X^*\ra Z^\perp$ is the metric projection of norm-$1$. This conclude that $Z$ has property-$(HB)$ in $X$.
\end{proof}

Our next theorem encounters a special case that of Proposition~\ref{TPH1} for the Banach spaces which are of type $L_1$-preduals or $M$-embedded spaces. A Banach space is said to be an $L_1$-predual if its dual is isometric to $L_1(\mu)$ for some measure space $(\Omega, \Sigma, \mu)$.
A Banach space is said to be $M$-embedded if it is an $M$-ideal in its bi-dual.

\begin{thm}
Let $X$ be an $L_1$-predual space or an $M$-embedded space. Let $J$ be an $M$-ideal in $X$ and $Z$ be a finite co-dimensional subspace of $J$ such that $Z$ has property-$(HB)$ in $J$. Then, $Z$ has property-$(HB)$ in $X$.
\end{thm}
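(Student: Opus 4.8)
The plan is to run the argument of Proposition~\ref{TPH1} at the level of the dual, replacing the $M$-summand of that proposition by the $M$-ideal $J$. Since $J$ is an $M$-ideal in $X$, its annihilator $J^{\perp}$ is an $L$-summand of $X^{*}$, so there is an $L$-projection $\pi\colon X^{*}\ra X^{*}$ with $\ker\pi=J^{\perp}$, range $V:=\pi(X^{*})$, and $\|x^{*}\|=\|\pi(x^{*})\|+\|x^{*}-\pi(x^{*})\|$ for every $x^{*}$. The restriction map identifies $V$ isometrically with $J^{*}$, and $\|\pi(x^{*})\|=d(x^{*},J^{\perp})=\|x^{*}|_{J}\|$. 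This $\pi$ plays exactly the role that the $M$-summand projection $P$ plays in Proposition~\ref{TPH1}.

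First I would record the structural identity coming from $Z\ci J$: for every $x^{*}\in X^{*}$ one has $x^{*}|_{Z}=(\pi x^{*})|_{Z}$, since $x^{*}-\pi x^{*}\in J^{\perp}$ annihilates $J\supseteq Z$. Consequently the annihilator of $Z$ in $X^{*}$ splits as $Z^{\perp_{X}}=Z^{\perp_{J}}\oplus_{\ell_1}J^{\perp}$, where $Z^{\perp_{J}}$ is viewed inside $V$ via the identification $V\cong J^{*}$. Because $Z$ has property-$(HB)$ in $J$, the characterization of property-$(HB)$ proved above (together with \cite[Theorem~3.4]{DP}) shows that the metric projection $P_{Z^{\perp_{J}}}\colon J^{*}\ra Z^{\perp_{J}}$ is a single-valued norm-$1$ linear projection with $\|I-P_{Z^{\perp_{J}}}\|=1$; I transfer it to $V$ through the isometry.

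Next I would define the candidate $Q:=P_{Z^{\perp_{J}}}\circ\pi+(I-\pi)\colon X^{*}\ra Z^{\perp_{X}}$ and verify the requirements of the property-$(HB)$ characterization. That $Q$ is a linear projection onto $Z^{\perp_{X}}$ is immediate from $Z^{\perp_{X}}=Z^{\perp_{J}}\oplus_{\ell_1}J^{\perp}$. That $Q$ is the single-valued metric projection follows by minimising $\|x^{*}-z^{*}\|$ over $z^{*}\in Z^{\perp_{X}}$: the $L$-decomposition makes the norm additive across $V$ and $J^{\perp}$, so the minimisation separates into its $J^{\perp}$-component (uniquely solved by $(I-\pi)x^{*}$) and its $V$-component (uniquely solved by $P_{Z^{\perp_{J}}}(\pi x^{*})$ since $P_{Z^{\perp_{J}}}$ is single-valued). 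The same additivity gives $\|Q(x^{*})\|=\|P_{Z^{\perp_{J}}}(\pi x^{*})\|+\|(I-\pi)x^{*}\|\le\|x^{*}\|$ and $\|(I-Q)(x^{*})\|=\|(I-P_{Z^{\perp_{J}}})(\pi x^{*})\|\le\|\pi x^{*}\|\le\|x^{*}\|$, so $\|Q\|=\|I-Q\|=1$. Finally I would identify $\ker Q$ with the norm-preserving set $Z^{\#_{X}}$: if $(I-\pi)x^{*}\neq 0$ then $\|x^{*}\|>\|\pi x^{*}\|\ge\|x^{*}|_{Z}\|$, so every element of $Z^{\#_{X}}$ lies in $V$, whence $Z^{\#_{X}}=Z^{\#_{J}}=\ker Q$. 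Thus $Z^{\#_{X}}$ is a closed subspace, $X^{*}=Z^{\#_{X}}\oplus Z^{\perp_{X}}$, and $\|x^{*}\|\ge\|Q(x^{*})\|$, which is precisely property-$(HB)$ for $Z$ in $X$.

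I expect the main obstacle to be this last step, namely checking that $\ker Q$ really is the \emph{norm-preserving} set $Z^{\#_{X}}$ rather than merely some complement of $Z^{\perp_{X}}$: property-$(HB)$ is characterised through the specific subspace $Z^{\#_{X}}$, so producing a norm-$1$ projection with $\|I-Q\|=1$ is not by itself enough, and it is the $\ell_1$-additivity furnished by the $M$-ideal $L$-decomposition that forces $Z^{\#_{X}}\ci V$ and closes the gap. I note that this route uses only that $J$ is an $M$-ideal; the hypotheses that $X$ be an $L_{1}$-predual or $M$-embedded space and that $Z$ be finite co-dimensional are precisely what one needs for the alternative bidual argument, in which one passes to $J^{\perp\perp}$ (an $M$-summand in $X^{**}$), applies Proposition~\ref{TPH1} to $Z^{\perp\perp}$ there, and then descends back to $X$ — it is the lifting of property-$(HB)$ to the bidual and its descent to $X$ that genuinely require these extra assumptions.
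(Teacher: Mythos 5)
Your argument is correct, but it is genuinely different from the paper's. The paper argues in the bidual: since $J$ is an $M$-ideal in $X$, $J^{\perp\perp}$ is an $M$-summand in $X^{**}$; by \cite[Theorem~3.14]{DP} (this is exactly where the $L_1$-predual/$M$-embedded hypothesis and the finite co-dimensionality of $Z$ are consumed) property-$(HB)$ of $Z$ in $J$ lifts to property-$(HB)$ of $Z^{\perp\perp}$ in $J^{\perp\perp}$; Proposition~\ref{TPH1} then gives property-$(HB)$ of $Z^{\perp\perp}$ in $X^{**}$, and a second application of \cite[Theorem~3.14]{DP} descends back to $Z$ in $X$ --- precisely the ``alternative bidual argument'' you sketch in your last paragraph. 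Your route instead stays in $X^*$: the $M$-ideal decomposition $X^*=J^{\#}\oplus_{\ell_1}J^{\perp}$ with $J^{\#}\cong J^*$ splits $Z^{\perp_X}$ as $Z^{\perp_J}\oplus_{\ell_1}J^{\perp}$, the $\ell_1$-additivity makes the minimization defining the metric projection onto $Z^{\perp_X}$ separate into its two components, and your norm-attainment observation (that $(I-\pi)x^*\neq 0$ forces $\|x^*\|>\|x^*|_Z\|$) correctly identifies $\ker Q$ with $Z^{\#_X}$, so \cite[Theorem~3.4]{DP}, or equivalently the paper's own characterization of property-$(HB)$, applies. In substance you have upgraded Proposition~\ref{TPH1} from $M$-summands to $M$-ideals, and your proof establishes a strictly stronger theorem: neither the hypothesis that $X$ be an $L_1$-predual or $M$-embedded space, nor the finite co-dimensionality of $Z$, is actually used. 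What the paper's route buys is brevity, reusing \cite[Theorem~3.14]{DP} and Proposition~\ref{TPH1} as black boxes (at the cost of also needing that the $M$-ideal $J$ inherits the $L_1$-predual/$M$-embedded property from $X$); what yours buys is a self-contained dual-space argument and greater generality.
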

\begin{proof}
	Since $J$ is an $M$-ideal in $X$, $J^{\perp\perp}$ is an $M$-summand in $X^{**}$. Let us assume that $X^{**}=J^{\perp\perp}\bigoplus_{\ell_\iy}W$ for some subspace W of $X^{**}$. Now $Z\ci J$ is finite co-dimensional and the space $J$ is a space either an $L_1$-predual or $M$-embedded. Hence, by \cite[Theorem 3.14]{DP}, $Z^{\perp\perp}$ has property-$(HB)$ in $J^{\perp\perp}$. Since $J^{\perp\perp}$ is an $M$-summand in $X^{**}$, by Proposition~\ref{TPH1}, we get $Z^{\perp\perp}$ has property-$(HB)$ in $X^{**}$. Again, by \cite[Theorem 3.14]{DP}, we get $Z$ has property-$(HB)$ in $X$.
\end{proof}

\subsection{Separable determination of these properties.}
In this part, we discuss that these properties are separably determined. First, we have the following observation for property-$(wU)$.
\begin{prop}
Let $Y$ be a subspace of $X$, and every $1$-dimensional subspace of $Y$ have property-$(wU)$ in $X$. Then $Y$ has property-$(wU)$ in $X$. 
\end{prop}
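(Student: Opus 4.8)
The plan is to verify the definition of property-$(wU)$ for $Y$ directly, reducing the uniqueness of the extension to the corresponding one-dimensional question. First I would take any $y^*\in Y^*$ with $\|y^*\|=|y^*(y_0)|$ for some $y_0\in S_Y$; after normalizing I may assume $\|y^*\|=1$ and $y^*(y_0)=1$ (replacing $y_0$ by $-y_0$ if necessary). The single norm-attaining point $y_0$ is the only piece of data the argument will need.

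Next I would pass to the one-dimensional subspace $Z=\textrm{span}\{y_0\}\ci Y$ and consider the restriction $z^*:=y^*|_Z$. Since $y_0\in Z$ and $z^*(y_0)=1$, while $\|z^*\|\leq\|y^*\|=1$, I obtain $\|z^*\|=1=|z^*(y_0)|$; thus $z^*$ is a functional on $Z$ attaining its norm at $y_0\in S_Z$, which is exactly the situation to which the hypothesis on one-dimensional subspaces applies.

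The key step is then to observe that any two norm-preserving extensions $x_1^*,x_2^*$ of $y^*$ over $X$ are automatically norm-preserving extensions of $z^*$: they restrict to $z^*$ on $Z$ (as they agree with $y^*$ on all of $Y\supseteq Z$), and $\|x_i^*\|=\|y^*\|=1=\|z^*\|$. Because $Z$ has property-$(wU)$ in $X$ by hypothesis and $z^*$ attains its norm on $S_Z$, its norm-preserving extension is unique, forcing $x_1^*=x_2^*$. Hence $y^*$ has a unique norm-preserving extension, and $Y$ has property-$(wU)$ in $X$.

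I do not anticipate any serious obstacle: the whole argument rests on the elementary fact that restricting a norm-attaining functional to the span of its norm-attaining point preserves the norm, so the one-dimensional hypothesis transfers verbatim. The only point requiring a moment's care is the normalization allowing me to assume $y^*(y_0)=1$, which merely absorbs the sign implicit in $|y^*(y_0)|$.
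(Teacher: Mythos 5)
Your proof is correct and takes essentially the same route as the paper's: restrict $y^*$ to $\textrm{span}\{y_0\}$, observe that the restriction still has norm $\|y^*\|$ and attains it at $y_0$, so any two norm-preserving extensions of $y^*$ are norm-preserving extensions of the restriction, and the one-dimensional hypothesis forces them to coincide. Your explicit verification that $\|y^*|_{\textrm{span}\{y_0\}}\|=\|y^*\|$ is the same key fact the paper uses implicitly.
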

\begin{proof}
Let $y^*\in Y^*$ with $y^*(y_0)=\|y^*\|$ for some $y_0\in Y$ and $x_1^*,x_2^*$ are any two norm-preserving extensions of $y^*$. Put $Y_0=\text{span}\{y_0\}$. Then $x_1^*$ and $x_2^*$ are two norm-preserving extensions of $y^*|_{Y_0}$. Hence, $x_1^*=x_2^*$ as $Y_0$ has propety-$(wU)$ in $X$. Thus, $Y$ has property-$(wU)$ in $X$.
\end{proof}
\begin{cor}
Let $Y$ be a subspace of $X$, and every separable subspace of $Y$ has property-$(wU)$ in $X$. Then $Y$ has property-$(wU)$ in $X$. 
\end{cor}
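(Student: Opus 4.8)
The plan is to derive this corollary directly from the preceding Proposition, using the fact that finitely generated subspaces sit inside separable ones. The statement to prove is: if every separable subspace of $Y$ has property-$(wU)$ in $X$, then $Y$ itself has property-$(wU)$ in $X$. The natural strategy is to reduce to the hypothesis of the previous Proposition, namely that every $1$-dimensional subspace of $Y$ has property-$(wU)$ in $X$, and then simply invoke that result.

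First I would fix an arbitrary $1$-dimensional subspace $Y_0 = \text{span}\{y_0\}$ of $Y$, where $y_0 \in Y$. Since $Y_0$ is itself separable (indeed finite-dimensional), it is a separable subspace of $Y$, and thus by hypothesis $Y_0$ has property-$(wU)$ in $X$. As this holds for every choice of $y_0$, every $1$-dimensional subspace of $Y$ has property-$(wU)$ in $X$. The previous Proposition then immediately yields that $Y$ has property-$(wU)$ in $X$, completing the argument.

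I do not anticipate any genuine obstacle here: the corollary is essentially a packaging of the Proposition, exploiting the trivial observation that a $1$-dimensional space is separable. The only point that requires minimal care is confirming that the class of subspaces quantified over in the hypothesis (separable subspaces) contains the class needed to apply the Proposition ($1$-dimensional subspaces), which it plainly does. Consequently the proof reduces to one short deduction, and no norm-attainment or extension analysis beyond what is already encapsulated in the Proposition is needed.
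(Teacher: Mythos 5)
Your proposal is correct and is exactly the intended argument: the paper states this as an immediate corollary of the preceding Proposition, relying precisely on the observation that every $1$-dimensional subspace is separable. No further comment is needed.
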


We now come to property-$(U)$ in Banach spaces. We first observe it is separably determined. The result follows from the following observation by Lima.

\begin{thm}(\cite[Theorem~2.1]{LA})
Let $Y$ be a subspace of $X$, then $Y$ has property-$(U)$ in $X$ if and only if for any $x_1^*, x_2^*\in Y^\#$ with $x_1^*+x_2^*\in Y^\perp$ implies $x_1^*+x_2^*=0$.
\end{thm}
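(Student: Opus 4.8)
The plan is to prove both implications by translating between the two natural descriptions of $Y^\#$: as the collection of functionals $x^*\in X^*$ whose norm is witnessed on $Y$, and as the collection of all norm-preserving extensions of functionals on $Y$. The single observation that makes everything run is that $Y^\#$ is symmetric, i.e. $x^*\in Y^\#$ if and only if $-x^*\in Y^\#$, since the defining condition $\|x^*\|=\|x^*|_Y\|$ is unchanged under $x^*\mapsto -x^*$. With this in hand the argument parallels the equivalence $(1)\Lglra(3)$ already established for property-$(wU)$, the only difference being that here no norm-attainment on $S_Y$ is imposed.

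For the forward direction I would assume $Y$ has property-$(U)$ and take $x_1^*, x_2^*\in Y^\#$ with $x_1^*+x_2^*\in Y^\perp$. Setting $y^*:=x_1^*|_Y$, the membership $x_1^*+x_2^*\in Y^\perp$ gives $x_1^*|_Y=-x_2^*|_Y$, so $y^*=x_1^*|_Y=(-x_2^*)|_Y$. Since $x_1^*\in Y^\#$ and $-x_2^*\in Y^\#$, both $x_1^*$ and $-x_2^*$ are norm-preserving extensions of the single functional $y^*$; uniqueness forces $x_1^*=-x_2^*$, that is, $x_1^*+x_2^*=0$.

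For the converse I would assume the stated algebraic condition and let $x_1^*, x_2^*$ be two norm-preserving extensions of some $y^*\in Y^*$. Then $x_1^*,x_2^*\in Y^\#$, hence $x_1^*,-x_2^*\in Y^\#$, while restricting to $Y$ shows that $x_1^*+(-x_2^*)=x_1^*-x_2^*\in Y^\perp$. Applying the hypothesis to the pair $x_1^*,-x_2^*\in Y^\#$ yields $x_1^*-x_2^*=0$, i.e. $x_1^*=x_2^*$, so the norm-preserving extension of $y^*$ is unique and $Y$ has property-$(U)$.

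The proof carries no genuine obstacle; the only point demanding care is the sign bookkeeping, namely feeding $-x_2^*$ rather than $x_2^*$ into the symmetric set $Y^\#$, which is exactly what converts the additive constraint $x_1^*+x_2^*\in Y^\perp$ into the statement that $x_1^*$ and $-x_2^*$ restrict to the same functional on $Y$.
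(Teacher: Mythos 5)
Your proof is correct, and it is essentially the argument the paper itself relies on: the paper only cites Lima for this statement, but your sign-flipping reduction (replacing $x_2^*$ by $-x_2^*$ so that $x_1^*$ and $-x_2^*$ become two norm-preserving extensions of the same restriction) is exactly the technique the paper uses to prove the $(1)\Lglra(3)$ equivalence in its characterization of property-$(wU)$, just without the norm-attainment condition on $S_Y$. Nothing is missing; the symmetry of $Y^\#$ and the observation that $x_1^*+x_2^*\in Y^\perp$ means $x_1^*|_Y=(-x_2^*)|_Y$ are precisely the required ingredients.
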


\begin{prop}
Let $Y$ be a subspace of $X$, and every separable subspace of $Y$ has property-$(U)$ in $X$. Then $Y$ has property-$(U)$ in $X$. 
\end{prop}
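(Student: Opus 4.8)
The plan is to prove the contrapositive: assume $Y$ does \emph{not} have property-$(U)$ in $X$ and produce a separable subspace of $Y$ that also fails property-$(U)$ in $X$. By the characterization just stated (Lima's theorem), $Y$ failing property-$(U)$ means there exist $x_1^*, x_2^* \in Y^\#$ with $x_1^* + x_2^* \in Y^\perp$ but $x_1^* + x_2^* \neq 0$. I would fix such a witnessing pair once and for all; the entire task then reduces to finding a separable subspace $Z \ci Y$ on which the restrictions of $x_1^*$ and $x_2^*$ still witness the failure of property-$(U)$, i.e. such that $x_1^*|_Z, x_2^*|_Z \in Z^\#$, their sum lies in $Z^\perp$, and the sum is nonzero on $Z$.

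The first and easiest step handles the conditions $x_1^* + x_2^* \in Z^\perp$ and $(x_1^* + x_2^*)|_Z \neq 0$. Since $x_1^* + x_2^* \in Y^\perp$ is automatically zero on every subspace of $Y$, the containment in $Z^\perp$ holds for \emph{any} $Z \ci Y$, so this needs no work. For nonvanishing, since $x_1^* + x_2^* \neq 0$ as an element of $X^*$, I would pick a single point $w \in X$ with $(x_1^* + x_2^*)(w) \neq 0$; however, I must be careful, because I need $Z \ci Y$, so I cannot simply throw $w$ into $Z$. The correct observation is that $x_1^* + x_2^* \in Y^\perp$ forces $(x_1^*+x_2^*)|_Y = 0$, so in fact the pair restricted to $Y$ sums to zero. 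This means I should re-read what the failure gives: the nontrivial content is really that $x_1^* \neq -x_2^*$ as functionals on $X$, while $x_1^*|_Y = -x_2^*|_Y$. Thus $x_1^*$ and $-x_2^*$ are two \emph{distinct} norm-preserving extensions of the common functional $y^* := x_1^*|_Y \in Y^*$.

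With this reformulation the separable reduction becomes transparent. First I would choose a norming sequence $(y_n) \ci S_Y$ for $y^*$, i.e. $y^*(y_n) \to \|y^*\|$, which determines $\|x_1^*|_Z\|$ and $\|x_2^*|_Z\|$ correctly. Next, since $x_1^* \neq -x_2^*$ on $X$ but they agree on $Y$, I need a witness in $Y$ — and here is the subtlety: there may be no such witness, because they agree on all of $Y$. Hence the distinctness must be detected on the $Y^\#$ / extension side rather than inside $Y$ itself. The clean route is: each $x_i^* \in Y^\#$ means $\|x_i^*\| = \|x_i^*|_Y\| = \|y^*\|$, so I take a separable subspace $Z \ci Y$ containing the norming sequence $(y_n)$, guaranteeing $\|x_i^*|_Z\| = \|y^*\| = \|x_i^*\|$, so that $x_i^*|_Z \in Z^\#$ and moreover $x_1^*|_Z + x_2^*|_Z = (x_1^*+x_2^*)|_Z = 0$ since $x_1^*+x_2^* \in Y^\perp$. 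But then the pair restricted to $Z$ sums to \emph{zero}, which by Lima's criterion would make $Z$ \emph{satisfy} property-$(U)$, not violate it — revealing that the separable witness cannot be built purely inside $Y$.

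The main obstacle, therefore, is that the naive restriction collapses the witness, so the proof must instead transport the failure through the \emph{extension} functionals and the space $X$. The correct strategy is to separablize on the side where distinctness lives: since $x_1^* \neq x_2^*$ (after replacing $x_2^*$ by $-x_2^*$, these are two distinct extensions of $y^*$), fix $x_0 \in S_X$ with $x_1^*(x_0) \neq x_2^*(x_0)$ — this $x_0$ need not lie in $Y$. Then I would build a separable subspace $Z$ of $Y$ that is norming for both restrictions via a countable $(y_n)$ as above; the key point to push through is that distinct norm-preserving extensions of $y^*|_Z$ over $X$ persist, because any extension of $y^*$ over $X$ restricts to an extension of $y^*|_Z$, and $x_1^*, x_2^*$ remain distinct norm-preserving extensions of the \emph{separable-subspace} functional $y^*|_Z$. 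Concretely, $\|x_i^*\| = \|y^*|_Z\|$ since $Z$ was chosen norming, so $x_1^*$ and $x_2^*$ are two distinct Hahn-Banach extensions of $y^*|_Z \in Z^*$ to $X$, whence $Z$ fails property-$(U)$ in $X$. This contradicts the hypothesis that every separable subspace of $Y$ has property-$(U)$, completing the contrapositive. The one routine verification I would include is that $y^*|_Z \neq 0$ and that the norming sequence indeed forces $\|x_i^*\| = \|y^*|_Z\|$, both of which follow directly from $x_i^* \in Y^\#$ and the choice of $(y_n)$.
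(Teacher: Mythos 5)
Your final argument is correct and essentially reproduces the paper's proof: take a norming sequence for $y^*$ in $S_Y$, let $Z$ be its separable closed linear span inside $Y$, and observe that $x_1^*$ and $-x_2^*$ remain two distinct norm-preserving extensions of $y^*|_Z$, contradicting the hypothesis that $Z$ has property-$(U)$ in $X$. The differences are only in packaging --- the paper argues directly rather than by contrapositive, applying Lima's criterion to $Z$ (the same functionals $x_1^*,x_2^*$, viewed as elements of $X^*$, lie in $Z^\#$ with $x_1^*+x_2^*\in Z^\perp$) --- and your mid-proof ``obstacle'' was illusory, since $Z^\#$ consists of functionals on all of $X$ rather than of restrictions, so the witnessing pair never needed to be collapsed onto $Z$ in the first place.
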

\begin{proof}
Let $x_1^*,x_2^*\in Y^\#$ and $x_1^*+x_2^*\in Y^\perp$. It remains to prove that $x_1^*+x_2^*=0$. Let $(y_n)\ci S_Y$ such that $\ds\sup_{n\in \mathbb{N}}|x_1^*(y_n)|=\|x_1^*|_Y\|=\|x_1^*\|$ and $(w_n)\ci S_Y$ such that $\ds\sup_{n\in \mathbb{N}}|x_2^*(w_n)|=\|x_2^*|_Y\|=\|x_2^*\|$. Consider $Z=\textrm{span}\{y_n,w_m:n,m\in \mathbb{N}\}$. Clearly, $x_1^*,x_2^*\in Z^\#$ and $x_1^*+x_2^*\in Z^\perp$. Since $Z$ has property-$(U)$ in $X$, by the above, $x_1^*+x_2^*=0$.
\end{proof}

Let us recall the definition of $3.X.I.P.$ for a subspace $Y$.
\bdfn \cite{LA}
Let $Y$ be a subspace of $X$. $Y$ is said to have $n.X.I.P.$ if for any family of $n$ closed balls in $X$ with centres in $Y$, viz. $\{B[y_i,r_i]:1\leq i\leq n\}$ such that $\cap_{i=1}^n B[y_i,r_i]\neq\es$ satisfies $\left(\cap_{i=1}^n B[y_i,r_i+\e]\right)\bigcap Y\neq\es$ for $\e>0$.
\edfn

The following proposition is a direct consequence of the definition of $3.X.I.P.$

\begin{prop}
Let $Y$ be a subspace of $X$, and every $3$-dimensional subspace of $Y$ have $3.X.I.P.$ in $X$. Then $Y$ has $3.X.I.P.$ in $X$. 
\end{prop}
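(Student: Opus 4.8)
The plan is to localize each instance of the $3.X.I.P.$ test for $Y$ to a single finite-dimensional subspace, exploiting the fact that only three balls are involved at a time. To verify that $Y$ has $3.X.I.P.$ in $X$, I would begin with an arbitrary family of three closed balls $\{B[y_i,r_i]:1\le i\le 3\}$ with centres $y_1,y_2,y_3\in Y$ and with $\cap_{i=1}^3 B[y_i,r_i]\neq\es$, fix $\e>0$, and aim to produce a point of $Y$ lying in $\cap_{i=1}^3 B[y_i,r_i+\e]$.

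The key observation is that the three centres already live in the finite-dimensional subspace $Y_0=\text{span}\{y_1,y_2,y_3\}\ci Y$, whose dimension is at most $3$. If $\dim Y_0=3$, the hypothesis applies directly to $Y_0$; if $\dim Y_0<3$ (and $\dim Y\ge 3$), I would enlarge $Y_0$ to a $3$-dimensional subspace $Y_1\ci Y$ still containing $y_1,y_2,y_3$. Either way one obtains a $3$-dimensional subspace $Y_1\ci Y$ containing all three centres, to which the hypothesis that $Y_1$ has $3.X.I.P.$ in $X$ applies.

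I would then run the $3.X.I.P.$ of $Y_1$: since $y_1,y_2,y_3\in Y_1$ and $\cap_{i=1}^3 B[y_i,r_i]\neq\es$, the property yields a point of $Y_1$ in $\cap_{i=1}^3 B[y_i,r_i+\e]$. Because $Y_1\ci Y$, this point lies in $\bigl(\cap_{i=1}^3 B[y_i,r_i+\e]\bigr)\cap Y$, which is therefore nonempty, as required.

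The only real subtlety — the one point I would treat with care rather than as routine reduction — is the degenerate case where $\text{span}\{y_1,y_2,y_3\}$ fails to be exactly $3$-dimensional, together with the low-dimensional cases $\dim Y<3$, where the statement is either vacuous or immediate. For $\dim Y\ge 3$ the enlargement step removes this issue, the extension existing because any finite-dimensional subspace of a space of dimension at least $3$ can be completed to a $3$-dimensional one by extending a basis. This is precisely why the proposition is a direct consequence of the definition: the $3.X.I.P.$ condition never sees more than three centres at once, so it can always be tested inside a single $3$-dimensional subspace of $Y$.
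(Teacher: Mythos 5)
Your main argument is exactly what the paper has in mind: the paper offers no proof at all, remarking only that the proposition is ``a direct consequence of the definition of $3.X.I.P.$'', and your localization --- the three centres span an at most $3$-dimensional subspace of $Y$, enlarge it inside $Y$ to dimension exactly $3$, apply the hypothesis to that subspace, and observe that the point it produces already lies in $Y$ --- is precisely that consequence, correctly carried out whenever $\dim Y\geq 3$.

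The one flaw is your parenthetical claim that the remaining case $\dim Y<3$ is ``either vacuous or immediate''. For $\dim Y\leq 1$ the conclusion is indeed automatic, but for $\dim Y=2$ it is neither vacuous nor immediate, and in fact the proposition as literally stated fails there. Every subspace of $\ell_p$ ($1<p<\iy$, $p\neq 2$) has property-$(U)$, and by Corollary~\ref{H7} a two-dimensional subspace of $\ell_p$ has property-$(SU)$ if and only if it is isometric to $\ell_p(2)$; take one that is not, e.g.\ $\textrm{span}\{(1,1,0,0,\dots),(0,1,1,0,\dots)\}$, which contains no pair of nonzero disjointly supported vectors. By \cite[Theorem~4.1]{LA} (property-$(U)$ together with $3.X.I.P.$ is equivalent to property-$(SU)$), this subspace fails $3.X.I.P.$ in $\ell_p$, yet it has no $3$-dimensional subspaces at all, so the hypothesis of the proposition holds vacuously while the conclusion is false. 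This is really a defect of the paper's formulation, which tacitly assumes $\dim Y\geq 3$ (or intends ``$3$-dimensional'' to mean ``at most $3$-dimensional''); your reduction is sound in the intended case, but dismissing the two-dimensional case as immediate glosses over a genuine failure of the statement rather than resolving it.
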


Let us recall that a subspace $Y$ of $X$ with property-$(U)$, satisfies property-$(SU)$ if and only if $Y$ has $3.X.I.P.$ (see \cite[Theorem~ 4.1]{LA}) This concludes our next result.

\begin{prop}\label{SEPSU}
Let $Y$ be a subspace of $X$, and every separable subspace of $Y$ has property-$(SU)$ in $X$. Then $Y$ has property-$(SU)$ in $X$. 
\end{prop}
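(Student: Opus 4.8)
The plan is to decompose property-$(SU)$ into the conjunction of property-$(U)$ and the $3.X.I.P.$ by means of \cite[Theorem~4.1]{LA}, and then to glue the two ingredients back together using the separable (respectively, finite-dimensional) determination of each, both of which have already been recorded in this subsection.

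First I would establish that $Y$ has property-$(U)$ in $X$. Since property-$(SU)$ is stronger than property-$(U)$, the hypothesis that every separable subspace of $Y$ has property-$(SU)$ in $X$ gives in particular that every separable subspace of $Y$ has property-$(U)$ in $X$. The preceding proposition (separable determination of property-$(U)$) then yields that $Y$ has property-$(U)$ in $X$.

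Next I would show that $Y$ has $3.X.I.P.$ in $X$. By the proposition asserting that $3.X.I.P.$ is determined on $3$-dimensional subspaces, it suffices to verify that every $3$-dimensional subspace $F$ of $Y$ has $3.X.I.P.$ in $X$. Such an $F$ is separable, so by hypothesis it has property-$(SU)$ in $X$; in particular it has property-$(U)$ in $X$, and hence \cite[Theorem~4.1]{LA} applied to $F$ forces $F$ to have $3.X.I.P.$ in $X$. Thus every $3$-dimensional subspace of $Y$ has $3.X.I.P.$, and the quoted proposition upgrades this to $3.X.I.P.$ for $Y$ itself.

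Finally, having shown that $Y$ has both property-$(U)$ and $3.X.I.P.$ in $X$, a second application of \cite[Theorem~4.1]{LA}, now to $Y$, produces property-$(SU)$ for $Y$ in $X$. The only point demanding care is that \cite[Theorem~4.1]{LA} is phrased for subspaces already known to possess property-$(U)$, so I must confirm property-$(U)$ both for the $3$-dimensional pieces (which is immediate from property-$(SU)$) and for $Y$ (the first step) before invoking the equivalence in each direction. This is bookkeeping rather than a genuine obstacle; once it is in place the argument closes.
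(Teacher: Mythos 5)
Your proposal is correct and follows exactly the route the paper intends: the paper gives no explicit proof but sets up precisely these ingredients in the preceding paragraphs (separable determination of property-$(U)$, determination of $3.X.I.P.$ by $3$-dimensional subspaces, and \cite[Theorem~4.1]{LA} to pass between property-$(SU)$ and property-$(U)$ plus $3.X.I.P.$), then states ``This concludes our next result.'' Your bookkeeping about verifying property-$(U)$ before each invocation of Lima's equivalence is the right care to take, and nothing further is needed.
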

\begin{prop}
Let $Y$ be a subspace of $X$, and every separable subspace of $Y$ has property-$(HB)$ in $X$. Then $Y$ has property-$(HB)$ in $X$. 
\end{prop}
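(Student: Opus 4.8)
The plan is to split the characterization of property-$(HB)$ into its two ingredients and verify them separately. Recall that property-$(HB)$ is strictly stronger than property-$(SU)$; hence the hypothesis that every separable subspace of $Y$ has property-$(HB)$ in $X$ forces every separable subspace of $Y$ to have property-$(SU)$ in $X$. By Proposition~\ref{SEPSU}, $Y$ then has property-$(SU)$ in $X$, so $Y^{\#}$ is a closed subspace of $X^*$ and $X^*=Y^{\#}\oplus Y^{\perp}$. In view of the characterization of property-$(HB)$ established above, it therefore suffices to establish the remaining norm inequality: whenever $x^*=y^{\#}+y^{\perp}$ with $y^{\#}\in Y^{\#}$ and $y^{\perp}\in Y^{\perp}$, one has $\|x^*\|\geq\|y^{\perp}\|$.

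To prove this I would argue by contradiction. Suppose there is some $x^*\in X^*$ with decomposition $x^*=y^{\#}+y^{\perp}$ relative to $X^*=Y^{\#}\oplus Y^{\perp}$ such that $\|x^*\|<\|y^{\perp}\|$. The idea is to manufacture a separable subspace $Z\ci Y$ in which this same decomposition persists, so that the assumed failure is inherited by $Z$ and contradicts the fact that $Z$ has property-$(HB)$. Since $\|y^{\#}\|=\|y^{\#}|_Y\|$, choose a sequence $(z_n)\ci S_Y$ with $\sup_{n}|y^{\#}(z_n)|=\|y^{\#}\|$ and put $Z=\overline{\textrm{span}}\{z_n:n\in\mathbb{N}\}$, a separable closed subspace of $Y$.

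The key step is to check that the decomposition is faithfully seen by $Z$. On the one hand $\|y^{\#}|_Z\|\geq\sup_n|y^{\#}(z_n)|=\|y^{\#}\|\geq\|y^{\#}|_Z\|$, so $\|y^{\#}|_Z\|=\|y^{\#}\|$ and hence $y^{\#}\in Z^{\#}$. On the other hand $Y^{\perp}\ci Z^{\perp}$, so $y^{\perp}\in Z^{\perp}$. Thus $x^*=y^{\#}+y^{\perp}$ is a decomposition of $x^*$ with $y^{\#}\in Z^{\#}$ and $y^{\perp}\in Z^{\perp}$; because $Z$ has property-$(HB)$, hence property-$(SU)$, this is exactly the decomposition of $x^*$ in $X^*=Z^{\#}\oplus Z^{\perp}$. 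Applying the characterization of property-$(HB)$ to $Z$ now yields $\|x^*\|\geq\|y^{\perp}\|$, contradicting $\|x^*\|<\|y^{\perp}\|$. Therefore the norm inequality holds for every decomposition, and $Y$ has property-$(HB)$ in $X$.

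The main obstacle is the construction of $Z$: everything hinges on arranging that the $Y^{\#}$-component $y^{\#}$ continues to attain its norm on $Z$, i.e. $y^{\#}\in Z^{\#}$, which is precisely what the norming sequence $(z_n)$ secures, while the containment $y^{\perp}\in Z^{\perp}$ is automatic from $Z\ci Y$. Once the decomposition is seen to restrict faithfully to $Z$, transferring the property-$(HB)$ inequality from the separable subspace back to $Y$ is immediate.
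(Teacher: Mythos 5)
Your proof is correct and takes essentially the same route as the paper's: reduce to property-$(SU)$ via Proposition~\ref{SEPSU}, then argue by contradiction, using a norming sequence $(z_n)\ci S_Y$ to build a separable subspace $Z$ with $y^{\#}\in Z^{\#}$ and $y^{\perp}\in Z^{\perp}$, so that the bad decomposition contradicts property-$(HB)$ of $Z$. Your explicit check that the decomposition restricts faithfully to $Z$ (via uniqueness from property-$(SU)$) is a detail the paper leaves implicit, but otherwise the arguments coincide.
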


\begin{proof}
Suppose every separable subspace of $Y$ has property-$(HB)$ in $X$. By Proposition~\ref{SEPSU}, $Y$ has property-$(SU)$ in $X$. We assume that $Y$ does not have property-$(HB)$ in $X$. Let $x^*\in X^*$, $x^*=y^\#+y^\perp$, $y^\#\in Y^\#$ and $y^\perp\in Y^\perp$ such that $\|x^*\|<\|y^\perp\|$. Let $(y_n)\ci S_Y$ such that $\ds\sup_{n\in \mathbb{N}}|y^\#(y_n)|=\|y^\#|_Y\|=\|y^\#\|$. Consider $Z=\textrm{span}\{y_n:n\in \mathbb{N}\}$. Clearly, $y^\#\in Z^\#$, $y^\perp\in Z^\perp$ are related to $x^*=y^\#+y^\perp$, where $\|x^*\|<\|y^\perp\|$. This contradicts that $Z$ has property-$(HB)$ in $X$.
\end{proof}

\section{On Hahn-Banach smoothness and related properties in  sequence spaces}
In this section, we study the finite-dimensional and finite co-dimensional subspaces having all the aforementioned properties in $c_0$, $\ell_p$, where $1<p<\infty$. Most of the results are summarized in two tables for two cases at the end of two respective subsections.
\subsection{In finite-dimensional subspaces}\label{Sec2}

Let us recall \cite[Theorem~3.3]{P}. In this theorem, the author characterizes finite-dimensional subspaces with the property-$(U)$ in $c_0$. We now look at the cases for property-$(SU)$. 

Let us recall that a subspace $Y$ is said to be an ideal in $X$ if $Y^\perp$ is the kernel of a norm-$1$ projection on $X^*$. It is known that $Y$ has property-$(SU)$ in $X$ if and only if $Y$ has property-$(U)$ and it is an ideal in $X$ (see \cite{O}).

As a consequence of \cite[Proposition 2]{RT} and \cite[Corollary, Pg. 50]{LJ}, we have the following.
\begin{prop}\label{H3}
Let $Y$ be a finite-dimensional subspace of $c_0$. Then, $Y$ is an ideal in $c_0$ if and only if $Y$ is isometrically isomorphic to $\ell_\iy(\dim(Y))$.
\end{prop}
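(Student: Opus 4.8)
The plan is to prove the statement by combining the two cited external results and reducing the ideal condition to a concrete structural description of finite-dimensional subspaces of $c_0$. The proposition asserts that, for a finite-dimensional $Y \ci c_0$, being an ideal is equivalent to being isometrically isomorphic to $\ell_\iy(\dim Y)$. Since $Y$ is an ideal in $X$ precisely when $Y^\perp$ is the kernel of a norm-$1$ projection on $X^*$, and here $X = c_0$ so that $X^* = \ell_1$, the heart of the matter is to understand when the finite-codimensional annihilator $Y^\perp \ci \ell_1$ admits a norm-$1$ linear projection of $\ell_1$ onto a complement.

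For the reverse (easier) direction, I would argue that if $Y$ is isometrically $\ell_\iy(n)$ with $n = \dim Y$, then the ideal property should follow essentially from the self-duality structure of such spaces: one produces an explicit norm-$1$ projection whose kernel is $Y^\perp$, presumably by invoking \cite[Proposition 2]{RT}, which I expect to give exactly the equivalence between the ideal property and an $\ell_\iy$-type isometric structure in this setting. For the forward direction, I would suppose $Y$ is an ideal in $c_0$ and aim to deduce its isometric type. The natural route is to pass to a norm-$1$ projection $P$ on $\ell_1$ with $\ker P = Y^\perp$; its range is then a finite-dimensional $1$-complemented subspace of $\ell_1$, and one appeals to \cite[Corollary, Pg. 50]{LJ}, which I anticipate classifies the norm-$1$ complemented finite-dimensional subspaces of $\ell_1$ (these should be exactly the $\ell_1(k)$-type subspaces). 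Dualizing this information about $Y^\perp$ and the complement back to a statement about $Y \cong (\ell_1/Y^\perp)^*$ or about $Y$ itself should force $Y$ to be isometric to $\ell_\iy(n)$.

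Concretely, the key steps in order are: first, restate the ideal condition as the existence of a norm-$1$ projection on $c_0^* = \ell_1$ with kernel $Y^\perp$; second, identify the range of this projection as a $1$-complemented finite-dimensional subspace of $\ell_1$ and apply \cite[Corollary, Pg. 50]{LJ} to pin down its isometric structure; third, translate the structural conclusion on $\ell_1$ (via duality, since $Y$ is finite-dimensional and $Y \cong Y^{**}$, with $Y^*$ a quotient of $\ell_1$) into the claim $Y \cong \ell_\iy(\dim Y)$; and finally, invoke \cite[Proposition 2]{RT} to close the reverse implication, obtaining the stated equivalence.

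The main obstacle I anticipate is the duality bookkeeping in the forward direction: moving cleanly between a $1$-complemented subspace of $\ell_1$ (the range of the projection, or equivalently $Y^\perp$ and its complement) and the isometric type of the original space $Y$ in $c_0$. One must make sure the norm-$1$ projection on $\ell_1$ corresponds correctly to the ideal structure and that the classification result of \cite{LJ} is applied to the right subspace, so that the $\ell_1$-structure of the complement dualizes to the $\ell_\iy$-structure of $Y$. The cleanest exposition will likely just cite \cite[Proposition 2]{RT} for one implication and \cite[Corollary, Pg. 50]{LJ} for the other, verifying only that the finite-dimensionality of $Y$ and the identification $c_0^* = \ell_1$ let these two results be glued together without gaps.
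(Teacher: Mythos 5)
Your forward direction is mathematically sound, and it actually runs on the dual side rather than the primal side that the paper intends: from the ideal hypothesis you take the norm-one projection $Q$ on $\ell_1=c_0^*$ with $\ker Q=Y^\perp$, note that its range $G$ is a finite-dimensional $1$-complemented subspace of $\ell_1$ isometric to $\ell_1/Y^\perp\cong Y^*$ (the standard isometry between the range of a norm-one projection and the quotient by its kernel), conclude $Y^*\cong\ell_1(n)$ and hence $Y\cong\ell_\iy(n)$. However, the result that pins down $G$ here is the classification of contractively complemented subspaces of $L_1$ --- in this paper that is \cite[Chapter~6, Theorem~3, Pg.~162]{L}, used for exactly this purpose in Theorem~\ref{H8} --- and \emph{not} \cite[Corollary, Pg.~50]{LJ}, which you cite for it. The Lindenstrauss corollary says that the range of a norm-one projection in an $L_1$-predual space is again an $L_1$-predual (this is how Theorem~\ref{H2} uses it); it lives on the $c_0$ side. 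The paper's intended route is primal: by \cite[Proposition~2]{RT} a finite-dimensional (hence reflexive) ideal is the range of a norm-one projection on $c_0$ itself, then \cite[Corollary, Pg.~50]{LJ} makes $Y$ a finite-dimensional $L_1$-predual, and a finite-dimensional $L_1$-predual has dual $\ell_1(n)$, so $Y\cong\ell_\iy(n)$. Either route closes the forward implication; yours just needs the correct reference.

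The genuine gap is in the converse. You propose to produce the norm-one projection with kernel $Y^\perp$ ``presumably by invoking \cite[Proposition~2]{RT}, which I expect to give exactly the equivalence between the ideal property and an $\ell_\iy$-type isometric structure'' --- but that is circular: the equivalence between being an ideal and having $\ell_\iy$-structure is precisely the proposition being proved, and \cite[Proposition~2]{RT} says nothing about isometric type; it relates the ideal property of a reflexive subspace to its being $1$-complemented. Knowing only the isometry class of $Y$ gives you no projection for free, because $1$-complementedness in general depends on how a subspace sits inside $c_0$, not just on what it is isometric to. The missing idea is the injectivity of $\ell_\iy(n)$ (it is a $\mathcal{P}_1$-space, by Nachbin--Goodner--Kelley): since $Y$ is isometric to $\ell_\iy(n)$, the identity map of $Y$ extends to a norm-one operator $P:c_0\ra Y$, which is then a norm-one projection of $c_0$ onto $Y$; its adjoint $P^*$ is a norm-one projection on $\ell_1$ with $\ker P^*=\mathrm{ran}(P)^\perp=Y^\perp$, which is exactly the ideal condition. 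Without this injectivity step your reverse implication does not get off the ground.
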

Hence, we have the following result:
\begin{thm}\label{H1}
Let $Y$ be a finite-dimensional subspace of $c_0$. Then, $Y$ has property-$(SU)$ in $c_0$ if and only if $Y$ has property-$(U)$ in $c_0$ and it is isometrically isomorphic to $\ell_{\iy}(\dim(Y))$.
\end{thm}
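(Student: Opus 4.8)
The plan is to obtain the statement as an immediate combination of two facts already available in the excerpt: the characterization, attributed to \cite{O}, that a subspace $Y$ has property-$(SU)$ in $X$ if and only if $Y$ has property-$(U)$ in $X$ \emph{and} $Y$ is an ideal in $X$; and Proposition~\ref{H3}, which identifies the finite-dimensional ideals of $c_0$ as exactly those subspaces that are isometrically isomorphic to $\ell_\iy(\dim(Y))$. Since the hypothesis fixes $Y$ to be finite-dimensional, both ingredients apply verbatim, and the theorem reduces to joining them.

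For the forward implication, I would assume that $Y$ has property-$(SU)$ in $c_0$. By the \cite{O} characterization this splits into two conclusions: $Y$ has property-$(U)$ in $c_0$, and $Y$ is an ideal in $c_0$. The first is precisely one of the two conditions we want. For the second, using that $Y$ is finite-dimensional, I would invoke Proposition~\ref{H3} to upgrade ``$Y$ is an ideal in $c_0$'' to ``$Y$ is isometrically isomorphic to $\ell_\iy(\dim(Y))$'', supplying the remaining condition.

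For the converse, I would start from the two stated hypotheses: that $Y$ has property-$(U)$ in $c_0$ and that $Y$ is isometrically isomorphic to $\ell_\iy(\dim(Y))$. Applying the reverse direction of Proposition~\ref{H3} converts the isometric identification back into the statement that $Y$ is an ideal in $c_0$. Having both property-$(U)$ and the ideal property in hand, the \cite{O} characterization then yields property-$(SU)$ in $c_0$, closing the equivalence.

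The argument carries no genuine obstacle of its own, since all of the analytic content has been absorbed into the two cited results (and Proposition~\ref{H3} itself rests on \cite[Proposition~2]{RT} and \cite[Corollary, Pg.~50]{LJ}). The only point deserving a word of care is that Proposition~\ref{H3} is stated solely for finite-dimensional subspaces, so one should note explicitly that the finite-dimensionality hypothesis on $Y$ is exactly what licenses its use in both directions; outside that regime the clean equivalence with $\ell_\iy(\dim(Y))$ would no longer be available.
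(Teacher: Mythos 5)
Your proposal is correct and follows exactly the paper's route: the paper states Theorem~\ref{H1} as an immediate consequence (``Hence, we have the following result'') of Oja's characterization of property-$(SU)$ as property-$(U)$ plus being an ideal, combined with Proposition~\ref{H3} identifying the finite-dimensional ideals of $c_0$ with the subspaces isometrically isomorphic to $\ell_\infty(\dim(Y))$. Your two-directional assembly of these facts, including the remark that finite-dimensionality is what licenses the use of Proposition~\ref{H3}, is precisely the intended argument.
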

A demonstration of property-$(HB)$ is presented similarly.
\begin{thm}\label{H2}
Let $Y$ be a finite-dimensional subspace of $c_0$. Then, $Y$ has property-$(HB)$ in $c_0$ if and only if $Y$ has property-$(U)$ in $c_0$ and there exists a bi-contractive projection $P:c_0\ra Y$ such that both $Y$ and $(I-P)(c_0)$ are $L_1$-predual spaces.
\end{thm}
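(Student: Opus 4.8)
The plan is to characterize property-$(HB)$ for a finite-dimensional subspace $Y$ of $c_0$ by combining the characterization of property-$(HB)$ in terms of property-$(SU)$ plus an $L$-projection-type condition (from the earlier characterization theorem and Theorem~\ref{H1}) with the structural classification of bi-contractive projections on $c_0$. Since property-$(HB)$ is strictly stronger than property-$(SU)$, I would begin by invoking Theorem~\ref{H1}: any $Y$ with property-$(HB)$ in $c_0$ has property-$(SU)$, hence has property-$(U)$ and is isometrically isomorphic to $\ell_\iy(\dim Y)$; in particular $Y^\#$ is a closed complement of $Y^\perp$ in $c_0^*=\ell_1$. The key reformulation is that property-$(HB)$ asks for the metric projection $P_{Y^\perp}$ to be a norm-$1$ linear projection with $\|I-P_{Y^\perp}\|=1$ as well (this is essentially the content of \cite[Theorem~3.4]{DP} used repeatedly above, e.g.\ in Proposition~\ref{TPH1}). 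Dualizing, a norm-$1$ projection $Q$ on $\ell_1$ with $\|I-Q\|=1$ corresponds to a bi-contractive projection $P=I-Q^*$-type object on $c_0$ onto $Y$; so the first step is to translate the $(HB)$ condition on $Y^\perp\subseteq\ell_1$ into the existence of a bi-contractive projection $P\colon c_0\to Y$.

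Next I would identify the ranges. Write the bi-contractive projection as $P\colon c_0\to Y$ with complementary projection $I-P\colon c_0\to (I-P)(c_0)$. The structural input is the classification of contractive and bi-contractive projections on $L_1$-preduals (this is where \cite{LJ} and the ideal/bi-contractive-range theory enter, paralleling how Proposition~\ref{H3} was used for property-$(SU)$). The relevant fact is that the range and co-range of a bi-contractive projection on an $L_1$-predual are again $L_1$-preduals, and conversely such a decomposition yields the $L$-projection on the dual side that is exactly the $\|I-P_{Y^\perp}\|=1$ requirement. Thus the forward direction produces $P$ with $Y$ and $(I-P)(c_0)$ both $L_1$-preduals, and for the converse I would run the implication backwards: given such a $P$, its adjoint furnishes an $L$-decomposition of $\ell_1$ adapted to $Y^\perp$, which by \cite[Theorem~3.4]{DP} upgrades the already-present property-$(SU)$ (coming from $Y\cong\ell_\iy(\dim Y)$ via Theorem~\ref{H1}) to property-$(HB)$.

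Concretely, the steps in order are: (i) from property-$(HB)$ deduce property-$(SU)$ and hence $Y\cong\ell_\iy(\dim Y)$ and property-$(U)$; (ii) reinterpret the extra $(HB)$ inequality $\|x^*\|\geq\|y^\perp\|$ as $\|I-P_{Y^\perp}\|=1$, so that $P_{Y^\perp}$ is an $L$-type norm-$1$ projection on $\ell_1=c_0^*$; (iii) transfer this, via pre-adjoints, to a bi-contractive projection $P$ on $c_0$ with range $Y$; (iv) invoke the structure theory of $L_1$-preduals to conclude that both $Y$ and $(I-P)(c_0)$ are $L_1$-preduals; (v) reverse each step for the converse, using that an $L_1$-predual decomposition gives back the required norm inequalities and applying \cite[Theorem~3.4]{DP} to recover property-$(HB)$. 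The main obstacle I anticipate is step (iii)--(iv): carefully matching the \emph{bi-contractivity} of $P$ on $c_0$ with the simultaneous conditions $\|P_{Y^\perp}\|=1$ and $\|I-P_{Y^\perp}\|=1$ on the dual, and in particular verifying that the \emph{complementary} range $(I-P)(c_0)$ inherits the $L_1$-predual property rather than merely $Y$ itself. This requires the duality between $M$-structure on $c_0$ and $L$-structure on $\ell_1$ to be applied on both summands symmetrically, which is exactly where the bi-contractivity (as opposed to mere contractivity) of $P$ is indispensable.
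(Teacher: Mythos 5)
Your outline reproduces the paper's strategy---Theorem~\ref{H1} to get property-$(SU)$, the reformulation of property-$(HB)$ via \cite[Theorem~3.4]{DP} as bi-contractivity of the metric projection $P_{Y^\perp}$, and Lindenstrauss's result \cite[Corollary, Pg.~50]{LJ} that the range of a norm-one projection on an $L_1$-predual is again an $L_1$-predual---but the one step where you deviate from the paper is exactly the obstacle you flag, and as written it is a gap. In step (iii) you propose to descend from the dual-side projection on $\ell_1$ to a projection on $c_0$ by ``taking pre-adjoints.'' A projection on $\ell_1=c_0^*$ has a pre-adjoint only if it is weak$^*$-continuous, and this is not automatic for $I-P_{Y^\perp}$; you would need to prove it (it is true here, since a finite-rank projection whose kernel $Y^\perp$ is weak$^*$-closed has weak$^*$-continuous coordinate functionals, but your proposal contains no such argument). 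The paper runs the duality in the opposite direction, and the issue never arises: property-$(HB)$ implies property-$(SU)$, so $Y$ is an ideal in $c_0$, and a finite-dimensional ideal in $c_0$ is $1$-complemented (\cite[Proposition~2]{RT}, cf.\ Proposition~\ref{H3}); one takes the resulting norm-one projection $P\colon c_0\to Y$ \emph{first}, observes that $\ker(P^*)=Y^\perp$ and, using the $(SU)$ decomposition $\ell_1=Y^\#\oplus Y^\perp$, that $\mathrm{range}(P^*)=Y^\#$, so $I-P^*$ is precisely the metric projection $P_{Y^\perp}$; the extra $(HB)$ inequality then gives $\|I-P^*\|=1$, hence $\|I-P\|=1$, and Lindenstrauss's corollary applied to both $P$ and $I-P$ finishes the forward direction.

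Two smaller points. First, you transposed the two norm conditions: the extra $(HB)$ inequality $\|x^*\|\geq\|y^\perp\|$ says $\|P_{Y^\perp}\|=1$, whereas the bound $\|I-P_{Y^\perp}\|\leq 1$ is already a consequence of property-$(SU)$; your step (ii) attaches the former inequality to the latter projection. Second, describing what the adjoint of $P$ furnishes as an ``$L$-decomposition'' or ``$L$-projection'' is too strong: an $L$-projection would mean $\|x^*\|=\|P_{Y^\perp}x^*\|+\|x^*-P_{Y^\perp}x^*\|$, i.e.\ that $Y$ is an $M$-ideal, which is strictly stronger than property-$(HB)$. What is available, and all that is needed, is that $P^*$ and $I-P^*$ are both contractive; property-$(U)$ forces the norm-one projection $I-P^*$ onto $Y^\perp$ (whose complementary projection is also contractive) to coincide with the single-valued metric projection, and \cite[Theorem~3.4]{DP} then yields property-$(HB)$. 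This is the paper's converse, and note that it uses only property-$(U)$ and bi-contractivity of $P$; the $L_1$-predual hypotheses on $Y$ and $(I-P)(c_0)$ play no role in that direction.
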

\begin{proof}
Suppose that $Y$ has property-$(HB)$ in $c_0$. Hence, $Y$ is an ideal in $c_0$. Clearly, it is an $1$-complemented subspace of $c_0$. Let $P:c_0\ra Y$ be a norm-$1$ projection onto $Y$. It is evident that $\ker(P^*)=Y^\perp$ and $\|P^*\|=1$. As $Y$ has property-$(HB)$ in $c_0$, we have that $\|I-P^*\|=1$. Hence, $\|P\|=\|I-P\|=1$. Now, both $Y$ and $(I-P)(c_0)$ are ranges of norm-$1$ projections. Hence, the result follows from \cite[Corollary, Pg. 50]{LJ}.

For the converse part, let $P:c_0\ra Y$ be a bi-contractive projection with range$(P)=Y$. Clearly, $\|P^*\|=1=\|I-P^*\|$. Since $Y$ has property-$(U)$ in $c_0$ and $I-P^*$ is a bi-contractive projection from $\ell_1$ onto $Y^\perp$, by \cite[Theorem~3.4]{DP}, $Y$ has property-$(HB)$ in $c_0$.
\end{proof}

The finite-dimensional subspaces of $\ell_1$ having property-$(U)$ are precisely discussed in \cite[Theorem~2.3]{P}. In the same space, let's focus on the properties-$(SU)$ and property-$(HB)$.
\begin{thm}\label{H8}
Let $Y$ be a finite-dimensional subspace of $\ell_1$. Then, $Y$ has property-$(SU)$ in $\ell_1$ if and only if $Y$ has property-$(U)$ in $\ell_1$ and $Y$ is isometrically isomorphic to $\ell_1(\dim (Y))$.
\end{thm}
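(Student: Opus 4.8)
The plan is to follow the template established for $c_0$ in Theorem~\ref{H1}, where property-$(SU)$ was characterized as property-$(U)$ together with the ideal structure forcing $Y$ to be isometric to $\ell_\iy(\dim Y)$. Since $\ell_1$ is the dual context dual to $c_0$ in spirit, I expect the analogous isometric model here to be $\ell_1(\dim Y)$, and the proof should split along the same lines: the forward direction extracts the isometric structure from property-$(SU)$, and the converse verifies that property-$(U)$ plus this isometric identification yields property-$(SU)$.

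First I would invoke the characterization recalled just before Theorem~\ref{H1}, namely that $Y$ has property-$(SU)$ in $X$ if and only if $Y$ has property-$(U)$ and $Y$ is an ideal in $X$ (from \cite{O}). This immediately reduces the theorem to proving the following ideal-theoretic fact: \emph{a finite-dimensional subspace $Y$ of $\ell_1$ is an ideal in $\ell_1$ if and only if $Y$ is isometrically isomorphic to $\ell_1(\dim Y)$.} This is the exact $\ell_1$-analogue of Proposition~\ref{H3}, and I would aim to prove it by the same strategy that underlies that proposition: an ideal must be the range of a norm-$1$ projection, so $Y$ is $1$-complemented in $\ell_1$, and the classification of the $1$-complemented finite-dimensional subspaces of $\ell_1$ (via \cite[Corollary, Pg.~50]{LJ}, exactly as cited in the proof of Theorem~\ref{H2}) forces $Y$ to be a range of a norm-$1$ projection that is itself isometric to $\ell_1(\dim Y)$. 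Conversely, an isometric copy of $\ell_1(n)$ sitting in $\ell_1$ in the appropriate way is the range of a norm-$1$ projection, hence an ideal.

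Concretely, for the forward direction: if $Y$ has property-$(SU)$ then $Y^\perp$ is the kernel of a norm-$1$ projection on $\ell_1^*=\ell_\iy$, which dualizes to say $Y$ is $1$-complemented in $\ell_1$ by a projection $P$ with $P^{**}$ the relevant predual projection. Combining $1$-complementation with the fact that $Y$ is finite-dimensional and the structural result of \cite{LJ} (together with \cite[Proposition~2]{RT} in the role it plays for Proposition~\ref{H3}) pins down $Y$ as an isometric $\ell_1(\dim Y)$. For the converse, assuming property-$(U)$ and that $Y\cong\ell_1(\dim Y)$ isometrically, I would exhibit the natural norm-$1$ projection onto $Y$ to conclude $Y$ is an ideal, and then re-apply the \cite{O} equivalence to obtain property-$(SU)$.

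The main obstacle I anticipate is the clean identification of the isometric model in the forward direction: unlike the $c_0$ case where $\ell_\iy(n)$ arises transparently from the ideal condition, here I must be careful that the $1$-complemented finite-dimensional subspaces of $\ell_1$ are \emph{exactly} the isometric copies of $\ell_1(n)$, and that the relevant projection really has norm one (not merely boundedness). This is precisely where the combination of \cite[Proposition~2]{RT} and \cite[Corollary, Pg.~50]{LJ} must do the work, and I would need to check that the hypotheses of those results apply verbatim in the $\ell_1$ setting rather than only in the $c_0$ setting in which Proposition~\ref{H3} was stated; any subtlety about whether property-$(U)$ is needed to rule out degenerate ideal embeddings should be isolated there.
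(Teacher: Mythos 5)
Your overall route is the same as the paper's: reduce via the \cite{O} equivalence (property-$(SU)$ $=$ property-$(U)$ $+$ ideal), use finite-dimensionality to upgrade ``ideal'' to ``range of a norm-$1$ projection,'' and then invoke a classification of $1$-complemented finite-dimensional subspaces to identify $Y$ with $\ell_1(\dim(Y))$. However, there is a genuine gap at exactly the point you flagged but did not resolve: the pair \cite[Proposition~2]{RT} $+$ \cite[Corollary, Pg.~50]{LJ} does \emph{not} transfer to the $\ell_1$ setting. The Lindenstrauss corollary concerns ranges of norm-one projections in $L_1$-\emph{predual} spaces (this is why it works for $c_0$ in Proposition~\ref{H3} and Theorem~\ref{H2}); $\ell_1$ is not an $L_1$-predual, so that corollary says nothing about your $Y\subseteq\ell_1$, and your forward direction has no support for the claim that $1$-complemented finite-dimensional subspaces of $\ell_1$ are isometric copies of $\ell_1(n)$. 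The missing ingredient, which is what the paper actually uses, is the contractive-projection theorem for $L_1$-spaces, \cite[Chapter~6, Theorem~3, Pg.~162]{L} (the Douglas--Ando-type result): a subspace of $L_1(\mu)$ is the range of a norm-one projection if and only if it is isometrically an $L_1$-space. The paper also cites \cite[Pg.~597]{RT} (rather than Proposition~2 of that paper) for the step ``finite-dimensional ideal in $\ell_1$ $\Rightarrow$ $Y=Y^{\perp\perp}$ is the range of a norm-$1$ projection.''

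The same gap infects your converse: for an arbitrary isometric copy of $\ell_1(n)$ inside $\ell_1$ there is no ``natural'' norm-one projection to exhibit without an argument; the paper again gets the projection from \cite[Chapter~6, Theorem~3, Pg.~162]{L}. (One could alternatively close this by hand: if $y^1,\dots,y^n\in\ell_1$ are the images of the unit vector basis under an isometry from $\ell_1(n)$, then from $\sum_i\bigl(|y^1_i+y^2_i|+|y^1_i-y^2_i|\bigr)=2\sum_i 2\max(|y^1_i|,|y^2_i|)/2$ together with $\|y^1\pm y^2\|=2$ one forces $\min(|y^1_i|,|y^2_i|)=0$ for all $i$, so the $y^j$ are pairwise disjointly supported and an explicit norm-one projection exists; but this argument, or the citation of \cite{L}, must actually appear.) With the Lacey theorem substituted for the Lindenstrauss corollary in both directions, your outline becomes the paper's proof.
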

\begin{proof}
Since $Y$ is finite-dimensional subspace of $\ell_1$ and it is an ideal, $Y=Y^{\perp\perp}$ is the range of a norm-$1$ projection (see \cite[Pg. 597]{RT}). Hence, by \cite[Chapter~6, Theorem~3, Pg. 162]{L}, it is isometrically isomorphic to $L_1(\mu)$ for some measure $\mu$. As the dimension of $Y$ is finite, $Y$ is isometrically isomorphic to $\ell_1(\dim (Y))$.

For the converse part, it follows from \cite[Chapter~6, Theorem~3, Pg. 162]{L} that $Y$ is the range of a norm-$1$ projection on $\ell_1$. Hence, it is an ideal in $\ell_1$ and the result follows.
\end{proof}

\begin{thm}\label{H100}
Let $Y$ be a finite-dimensional subspace of $\ell_1$. Then, $Y$ has property-$(HB)$ in $\ell_1$ if and only if $Y$ has property-$(U)$ in $\ell_1$ and there exists a bi-contractive projection $P:\ell_1\ra Y$ such that both $Y$, $(I-P)(\ell_1)$ are isometrically isomorphic to $\ell_1(\dim(Y))$ and $\ell_1$, respectively.
\end{thm}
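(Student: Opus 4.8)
The plan is to mirror the proof of Theorem~\ref{H2}, replacing the description of ranges of norm-$1$ projections on $c_0$ used there (via \cite[Corollary, Pg.~50]{LJ}) by the $L_1$-structure theorem \cite[Chapter~6, Theorem~3, Pg.~162]{L} for ranges of norm-$1$ projections on $\ell_1$, and then pinning down the precise isometric type of the two summands.

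For the forward implication, suppose $Y$ has property-$(HB)$ in $\ell_1$. Since property-$(HB)$ is stronger than property-$(SU)$, and the latter entails property-$(U)$ together with $Y$ being an ideal, Theorem~\ref{H8} gives that $Y$ has property-$(U)$ and $Y\cong\ell_1(\dim(Y))$. As a finite-dimensional ideal, $Y=Y^{\perp\perp}$ is the range of a norm-$1$ projection $P:\ell_1\ra Y$ (via \cite[Pg.~597]{RT}, as in the proof of Theorem~\ref{H8}), with $\ker(P^*)=Y^\perp$ and $\|P^*\|=1$. Just as in Theorem~\ref{H2}, property-$(HB)$ forces $\|I-P^*\|=1$ through the metric-projection characterization \cite[Theorem~3.4]{DP}; hence $\|P\|=\|I-P\|=1$, so $P$ is bi-contractive.

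It then remains to identify $(I-P)(\ell_1)$, the range of the norm-$1$ projection $I-P$ on $\ell_1$. By \cite[Chapter~6, Theorem~3, Pg.~162]{L} it is isometric to $L_1(\nu)$ for some measure $\nu$, and the crux is to show that $\nu$ is purely atomic. I would argue this by contradiction: a non-atomic part of $\nu$ would embed $L_1[0,1]$, and hence an isometric copy of $\ell_2^2$, isometrically into $\ell_1$; but $\ell_2^2$ admits no isometric embedding into $\ell_1$, since the Euclidean norm cannot be written as a sum of absolute values of linear functionals (its representing measure on the circle is the non-atomic uniform measure). As $(I-P)(\ell_1)$ is separable and infinite-dimensional (because $Y$ is finite-dimensional while $\ell_1$ is not), purely atomic $\nu$ forces $(I-P)(\ell_1)\cong\ell_1$.

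For the converse, assume $Y$ has property-$(U)$ in $\ell_1$ and that a bi-contractive projection $P:\ell_1\ra Y$ exists with the stated isometric descriptions. Then $\|P^*\|=\|I-P^*\|=1$ on $\ell_\iy=\ell_1^*$, and $I-P^*$ is a bi-contractive linear projection onto $Y^\perp$ because $\ker(P^*)=Y^\perp$; exactly as in the converse of Theorem~\ref{H2}, \cite[Theorem~3.4]{DP} then yields property-$(HB)$ from property-$(U)$ together with this bi-contractive projection onto $Y^\perp$. I expect the only genuinely new point, compared with the $c_0$ case, to be the last step of the forward direction, namely upgrading the abstract $L_1(\nu)$-isometry supplied by \cite{L} to $\ell_1$ itself; the bi-contractivity computation and the converse are routine transcriptions of Theorem~\ref{H2}.
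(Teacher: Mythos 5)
Your proposal is correct and takes essentially the same route as the paper's proof: property-$(HB)$ gives the ideal/norm-$1$ projection, the Theorem~\ref{H2} argument upgrades it to a bi-contractive projection, the structure theorem \cite[Chapter~6, Theorem~3, Pg.~162]{L} identifies the two ranges, and the converse follows from property-$(U)$ plus \cite[Theorem~3.4]{DP}. In fact you give strictly more detail than the paper, which cites Lacey's theorem and stops there, leaving implicit exactly the point you address: that $(I-P)(\ell_1)$, a priori only isometric to some $L_1(\nu)$, must be isometric to $\ell_1$ because a non-atomic part of $\nu$ would place an isometric copy of $\ell_2^2$ inside $\ell_1$, which is impossible.
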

\begin{proof}
Suppose that $Y$ has property-$(HB)$ in $\ell_1$. Since $Y$ is an ideal in $X$, $Y=Y^{\perp\perp}$ is the range of a norm-$1$ projection. Using similar arguments to those used in Theorem~\ref{H2}, we have that both $Y$ and $(I-P)(\ell_1)$ are ranges of norm-$1$ projections. Hence, the result follows from \cite[Chapter~6, Theorem~3, Pg. 162]{L}.

Using similar arguments to those used to prove the converse of Theorem~\ref{H2}, one can prove the converse.
\end{proof}

We now come for the spaces $\ell_p$. Due to the strict convexity of $\ell_q$, every subspace of $\ell_p$ has property-$(U)$, where $\frac{1}{p}+\frac{1}{q}=1$, $1<p<\infty$, $p\neq 2$. 

We have the following results from \cite[Theorem~1.2]{RB} as  every ideal in $\ell_p$ is an $1$-complemented subspace (see \cite[Proposition~2]{RT}).
\begin{thm}\label{H6}
Let $Y$ be a subspace of $\ell_p$. Then, $Y$ is an ideal in $\ell_p$ if and only if 
\bln
\item $Y$ is isometrically isomorphic to $\ell_{p}(\dim (Y))$.

or

\item $Y$ is spanned by a family of elements of disjoint supports.
\el 
\end{thm}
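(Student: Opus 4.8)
The plan is to funnel the statement through the classical classification of one-complemented subspaces of $\ell_p$ together with the fact, special to these spaces, that being an ideal is equivalent to being the range of a norm-one projection. Throughout, write $q$ for the conjugate exponent, so that $\ell_p^*=\ell_q$.

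The key reduction I would establish first is the equivalence: $Y$ is an ideal in $\ell_p$ if and only if $Y$ is one-complemented in $\ell_p$. The forward implication is precisely \cite[Proposition~2]{RT}, which says that an ideal $Y$ in $\ell_p$ equals $Y^{\perp\perp}$ and is the range of a norm-one projection. For the converse, I would take a norm-one projection $P\colon \ell_p\ra \ell_p$ with $P(\ell_p)=Y$ and pass to the adjoint: $P^*\colon \ell_q\ra \ell_q$ is again a projection with $\|P^*\|=\|P\|=1$, and $\ker(P^*)=(P(\ell_p))^\perp=Y^\perp$. Hence $Y^\perp$ is the kernel of a norm-one projection on $\ell_p^*$, which is exactly the definition of $Y$ being an ideal. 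This converts the theorem into the assertion that $Y$ is one-complemented in $\ell_p$ if and only if $(1)$ or $(2)$ holds.

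At this point I would simply invoke \cite[Theorem~1.2]{RB}, which characterizes the one-complemented subspaces of $\ell_p$ as being either isometrically isomorphic to $\ell_p(\dim Y)$ or spanned by a family of disjointly supported vectors; both implications of the reduced statement are contained in that characterization. As a concrete check on the easier direction, for $Y=\overline{\mathrm{span}}\{y_i\}$ with the $y_i$ normalized and disjointly supported, the map $P(x)=\sum_i\langle x,y_i^*\rangle\, y_i$, where $y_i^*\in\ell_q$ is the (disjointly supported) norming functional of $y_i$, is a projection onto $Y$; disjointness of the supports together with H\"older's inequality gives $\|P(x)\|_p^p=\sum_i|\langle x,y_i^*\rangle|^p\leq\|x\|_p^p$, so $\|P\|=1$ and $Y$ is one-complemented, which then upgrades back to the ideal property by the first reduction.

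The step I expect to require the most care is the reduction ``ideal $\Leftrightarrow$ one-complemented'': its substantive half, ideal $\Rightarrow$ one-complemented, is genuinely a feature of $\ell_p$ and is precisely what \cite[Proposition~2]{RT} supplies, while the reverse half is the soft adjoint computation above (and needs only the identity $\ker(P^*)=Y^\perp$, with reflexivity of $\ell_p$ ensuring the passage between $P$ and $P^*$ is harmless). I would also record that the two alternatives in the conclusion are not mutually exclusive — for $p\neq2$ an isometric copy of $\ell_p(\dim Y)$ inside $\ell_p$ is itself spanned by disjointly supported vectors — so the ``or'' is to be read inclusively and no separation of the two cases is required.
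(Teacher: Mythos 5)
Your proposal is correct and follows essentially the same route as the paper: reduce ``$Y$ is an ideal in $\ell_p$'' to ``$Y$ is the range of a norm-one projection'' via \cite[Proposition~2]{RT}, then invoke the classification of $1$-complemented subspaces of $\ell_p$ in \cite[Theorem~1.2]{RB}. The only difference is that you spell out the soft converse (norm-one complemented $\Rightarrow$ ideal, via the adjoint projection with kernel $Y^\perp$) and the disjoint-support sanity check, both of which the paper leaves implicit.
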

Since every subspace of $\ell_p$ has property-$(U)$, we have the following.
\begin{cor}\label{H7}
Let $Y$ be a finite-dimensional subspace of $\ell_p$. Then, $Y$ has property-$(SU)$ in $\ell_p$ if and only if $Y$ is isometrically isomorphic to $\ell_p(\dim (Y))$.
\end{cor}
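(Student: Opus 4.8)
The plan is to combine Corollary~\ref{H7} with the classification of ideals from Theorem~\ref{H6}, specializing to the finite-dimensional case. Recall that property-$(SU)$ is equivalent to property-$(U)$ together with being an ideal (see \cite{O}), and that every subspace of $\ell_p$ automatically has property-$(U)$ because $\ell_q$ is strictly convex for $1<p<\infty$, $p\neq 2$. So, for a finite-dimensional $Y\ci\ell_p$, having property-$(SU)$ reduces exactly to $Y$ being an ideal in $\ell_p$.

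First I would invoke Theorem~\ref{H6} to see that $Y$ is an ideal precisely when either $(1)$ $Y$ is isometrically isomorphic to $\ell_p(\dim(Y))$, or $(2)$ $Y$ is spanned by a family of elements of disjoint supports. The crux of the proof is therefore to observe that, in the \emph{finite-dimensional} case, condition $(2)$ already implies condition $(1)$: if $Y=\mathrm{span}\{e_1,\dots,e_k\}$ where the $e_i$ have pairwise disjoint supports, then the map sending the $i$-th standard basis vector of $\ell_p(k)$ to $e_i/\|e_i\|$ is an isometric isomorphism onto $Y$, since for disjointly supported vectors the $\ell_p$-norm of a linear combination is the $\ell_p$-combination of the coordinate moduli. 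Hence $Y\cong\ell_p(\dim(Y))$ isometrically, collapsing the two alternatives of Theorem~\ref{H6} into the single clause stated in the corollary.

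I expect the only genuine subtlety to lie in verifying that the disjoint-support spanning set can be taken to be a basis of the right cardinality — one must check that a spanning family of disjointly supported nonzero vectors is automatically linearly independent (immediate from disjointness of supports) and hence has exactly $\dim(Y)$ members, so that the resulting isometry lands on $\ell_p(\dim(Y))$ and not a lower-dimensional space. Conversely, if $Y$ is isometrically isomorphic to $\ell_p(\dim(Y))$, then alternative $(1)$ of Theorem~\ref{H6} holds outright, so $Y$ is an ideal and therefore has property-$(SU)$.

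Thus the whole argument is essentially a bookkeeping reduction: property-$(SU)$ $\Leftrightarrow$ ideal (by strict convexity of $\ell_q$ giving property-$(U)$ for free) $\Leftrightarrow$ one of the two cases of Theorem~\ref{H6} $\Leftrightarrow$ isometric to $\ell_p(\dim(Y))$, where the last equivalence uses the disjoint-support computation above to absorb case $(2)$ into case $(1)$. The main obstacle, such as it is, is simply making the disjoint-support isometry explicit and confirming it preserves the norm exactly; everything else is an application of results already established in the excerpt.
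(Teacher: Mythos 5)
Your proposal is correct and follows essentially the same route as the paper, which states Corollary~\ref{H7} as an immediate consequence of Theorem~\ref{H6} together with the facts that property-$(SU)$ equals property-$(U)$ plus being an ideal and that strict convexity of $\ell_q$ gives property-$(U)$ for free. Your only addition is to make explicit the (correct) observation that in the finite-dimensional case a disjointly supported spanning family yields an exact isometry onto $\ell_p(\dim(Y))$, so alternative $(2)$ of Theorem~\ref{H6} is absorbed into alternative $(1)$ --- a detail the paper leaves implicit.
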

The following result is guaranteed by a demonstration similar to that of Theorem~\ref{H100}.

\begin{thm}\label{I4}
Let $Y$ be a finite-dimensional subspace of $\ell_p$. Then, $Y$ has property-$(HB)$ in $\ell_p$ if and only if there exists a bi-contractive projection $P:\ell_p\ra Y$ such that both $Y$ and $(I-P)(\ell_p)$ are isometrically isomorphic to $\ell_p(\dim(Y))$ and $\ell_p$, respectively.
\end{thm}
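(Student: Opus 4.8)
The plan is to mirror the two-sided argument of Theorem~\ref{H100}, replacing the $\ell_1$-structure theorem for ranges of norm-$1$ projections by the classification of $1$-complemented subspaces of $\ell_p$ recorded in \cite[Theorem~1.2]{RB} and Theorem~\ref{H6}. One preliminary simplification explains why property-$(U)$ is \emph{not} listed as a hypothesis here, in contrast to Theorems~\ref{H2} and \ref{H100}: since $\ell_q$ is strictly convex for $1<p<\infty$, every subspace of $\ell_p$ automatically has property-$(U)$, so that clause comes for free.

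For the forward direction, suppose $Y$ has property-$(HB)$ in $\ell_p$. Since property-$(HB)$ implies property-$(SU)$, the space $Y$ is an ideal in $\ell_p$, and by \cite[Proposition~2]{RT} it is $1$-complemented; fix a norm-$1$ projection $P:\ell_p\to Y$. Being a finite-dimensional ideal, $Y\cong\ell_p(\dim Y)$ by Corollary~\ref{H7}. Passing to adjoints as in Theorem~\ref{H2}, one has $\ker(P^*)=Y^\perp$, and because $\|P\|=1$ every $x^*$ decomposes as $x^*=P^*x^*+(I-P^*)x^*$ with $P^*x^*\in Y^\#$ (here $\|P^*x^*\|=\|x^*|_Y\|$ forces $P^*x^*\in Y^\#$) and $(I-P^*)x^*\in Y^\perp$; the defining inequality $\|x^*\|\ge\|(I-P^*)x^*\|$ of property-$(HB)$ then forces $\|I-P^*\|=1$, hence $\|I-P\|=1$, so $P$ is bi-contractive and $I-P$ is a norm-$1$ projection onto $(I-P)(\ell_p)$.

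It remains to identify this complementary range. As $Y$ is finite-dimensional while $\ell_p$ is not, $(I-P)(\ell_p)$ is an infinite-dimensional ideal (it is the range of the norm-$1$ projection $I-P$, so its annihilator is the kernel of the norm-$1$ projection $(I-P)^*$); Theorem~\ref{H6} then shows it is isometric to $\ell_p$, since an infinite-dimensional subspace spanned by disjointly supported vectors is a copy of $\ell_p$. Hence $(I-P)(\ell_p)\cong\ell_p$, which completes the forward implication. For the converse I would argue exactly as in the converse of Theorem~\ref{H2}: given a bi-contractive $P$ with the stated isometric ranges, $\|P^*\|=\|I-P^*\|=1$, so $I-P^*$ is a bi-contractive projection of $\ell_q$ onto $Y^\perp$; since $Y$ already has property-$(U)$, \cite[Theorem~3.4]{DP} delivers property-$(HB)$.

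I expect the only genuinely substantive step to be the isometric identification $(I-P)(\ell_p)\cong\ell_p$, where one must rule out that the complementary range is merely a proper $1$-complemented subspace. This is precisely where infinite-dimensionality is invoked (to exclude the finite-block case $\ell_p(k)$), combined with the disjoint-support description of contractively complemented subspaces of $\ell_p$ supplied by \cite[Theorem~1.2]{RB}; everything else is the routine adjoint and duality bookkeeping already carried out in Theorems~\ref{H2} and \ref{H100}.
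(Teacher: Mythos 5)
Your proposal is correct and is essentially the paper's intended argument: the paper proves this result by declaring it "similar to Theorem~\ref{H100}," i.e., the adjoint/bi-contractivity bookkeeping of Theorem~\ref{H2} combined with the structure theorem for $1$-complemented subspaces of $\ell_p$ (Theorem~\ref{H6}/\cite[Theorem~1.2]{RB}) in place of Lacey's $L_1$ theorem, which is exactly what you do. Your explicit identification of $(I-P)(\ell_p)$ with $\ell_p$ via disjointly supported spanning vectors, and the converse via \cite[Theorem~3.4]{DP}, fill in the details the paper leaves implicit.
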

We also have the following.
\begin{thm}\label{H5}
Let $Y=\textrm{span} \{a^1,a^2,\dots,a^n\}$, $a^1,a^2,\dots,a^n\in \ell_p$. Then, $Y$ has property-$(HB)$ in $\ell_p$ if and only if there exist $n$ different indices $t_1,t_2,\dots,t_n$ such that:
\bla
\item $a^i_{t_j}=\delta_{i,j}$ ($1\leq i,j\leq n$)
\item for any $i$, there exists at most one index $s_i\notin \{t_1,t_2,\dots,t_n\}$ such that $|a^i_{s_i}|=1$ and $a^j_{s_i}=0$ for any $j\neq i$.
\el
\end{thm}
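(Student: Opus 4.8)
The plan is to reduce property-$(HB)$ to the existence of a bi-contractive projection via Theorem~\ref{I4}, and then invoke the classical rigidity of bi-contractive projections on $\ell_p$ ($1<p<\infty$, $p\neq 2$) to read off the coordinate conditions (a)--(b). The bridge is the elementary observation that for any (bounded linear) projection $P$ the operator $U:=2P-I$ satisfies $U^2=I$, and that $P$ is bi-contractive precisely when $U$ is a surjective isometry of $\ell_p$: writing $P=\tfrac12(I+U)$ one has $\|P\|\le1$ and $\|I-P\|=\|\tfrac12(I-U)\|\le1$ as soon as $\|U\|=1$, while the converse is the classical theorem of Ando on $L_p$. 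By the Banach--Lamperti description of the isometries of $\ell_p$ for $p\neq2$, such an involutive isometry must be a signed permutation $U(x)_k=\epsilon_k x_{\sigma(k)}$ with $\epsilon_k=\pm1$ and $\sigma$ an involution (a disjoint union of fixed points and transpositions, with $\epsilon_k=\epsilon_{\sigma(k)}$ forced on each transposition). I read conditions (a)--(b) as the assertion that, up to normalization, $\{a^1,\dots,a^n\}$ is a disjointly supported family with each $a^i$ equal to $e_{t_i}$ or $e_{t_i}+c_i e_{s_i}$, where $c_i=\pm1$; this is the substantive content to be matched.

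For the forward direction I would argue as follows. By Theorem~\ref{I4}, property-$(HB)$ produces a bi-contractive projection $P\colon\ell_p\to Y$ with $(I-P)(\ell_p)\cong\ell_p$, so $Y=\mathrm{ran}(P)$ is exactly the $+1$-eigenspace of the signed permutation $U=2P-I$. Decomposing $\sigma$ into fixed points and transpositions, this eigenspace is spanned by the vectors $e_t$ (fixed points $t$ with $\epsilon_t=+1$) and $e_t+\epsilon e_s$ (transpositions $\{t,s\}$ with common sign $\epsilon=\pm1$), and these blocks are pairwise disjointly supported. Choosing one coordinate $t_i$ from each block and normalizing so that $a^i_{t_i}=1$ yields $a^i_{t_j}=\delta_{i,j}$, which is condition (a); the partner coordinate of a transposition block is then the unique index $s_i\notin\{t_1,\dots,t_n\}$ with $|a^i_{s_i}|=|\epsilon|=1$ and $a^j_{s_i}=0$ for $j\neq i$ (by disjointness), which is condition (b). In the real case $|c|=1$ forces $c=\pm1$, exactly matching the admissible signs $\epsilon$; moreover no block can have support of size larger than two or an extra small coordinate, since the $+1$-eigenspace of a signed permutation never contains a vector of the form $e_{t_i}+c_i e_{s_i}+\delta e_r$ with $0<|\delta|<1$.

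For the converse, given generators in the form of (a)--(b) I would build the involution directly: place a transposition with sign $c_i=a^i_{s_i}\in\{\pm1\}$ on each pair $\{t_i,s_i\}$, a $+1$ fixed point on each $t_i$ whose block has no partner, and a $-1$ fixed point on every remaining coordinate. The resulting $U$ is an involutive surjective isometry, so $P=\tfrac12(I+U)$ is a bi-contractive projection; its range is precisely $Y=\mathrm{span}\{a^1,\dots,a^n\}$, and its complement $(I-P)(\ell_p)$ is the $-1$-eigenspace, spanned by the disjointly supported vectors $e_{t_i}-c_i e_{s_i}$ together with the (infinitely many) surviving free coordinates $e_k$, hence isometric to $\ell_p$. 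Since $Y$ is spanned by disjoint blocks it is isometric to $\ell_p(n)$ (cf. Theorem~\ref{H6} and Corollary~\ref{H7}), and Theorem~\ref{I4} then delivers property-$(HB)$.

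The step I expect to be the main obstacle is the correct and careful invocation of the classical rigidity, namely that the bi-contractive projections on $\ell_p$ ($p\neq2$) are exactly the operators $\tfrac12(I+U)$ with $U$ an involutive surjective isometry, and that such $U$ are signed permutations; once this is in hand, translating the eigenspace decomposition into (a)--(b) is combinatorial bookkeeping. Two points still require attention: realizing condition (a) as an honest normalization (one may first pass to a reduced basis of $Y$, using that the $n\times n$ submatrix $[a^i_{t_j}]$ may be taken invertible), and checking that the complement is genuinely \emph{isometric} to $\ell_p$ rather than merely a hyperplane, which holds because cofinitely many coordinates remain as free $-1$ fixed points. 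Finally I would flag that $p=2$ must be excluded throughout, since in $\ell_2$ every finite-dimensional subspace has property-$(HB)$ while conditions (a)--(b) generally fail.
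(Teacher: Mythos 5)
Your proposal is correct in substance, but it takes a genuinely different route from the paper's proof. The paper argues on the dual side and is essentially a two-line reduction: since $\ell_q$ is strictly convex, $Y$ automatically has property-$(U)$, so by \cite[Theorem~3.4]{DP} property-$(HB)$ is equivalent to the metric projection onto $Y^{\perp}$ being a linear bi-contractive projection in $\ell_q$; then, viewing $Y^{\perp}=\bigcap_{i=1}^{n}\ker J(a^i)$ as a finite co-dimensional subspace of $\ell_q$, the paper simply quotes \cite[Theorem~8]{BP2} (Baronti--Papini's characterization of finite co-dimensional ranges of bi-contractive projections in sequence spaces), which outputs conditions (a)--(b) directly. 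You instead stay in the primal space: you use Theorem~\ref{I4} to produce a bi-contractive projection onto $Y$ itself, and then you re-derive the content of the cited black box from more classical ingredients --- the rigidity theorem that bi-contractive projections on $L_p$ ($p\neq 2$) are exactly $\frac{1}{2}(I+U)$ with $U$ an isometric involution, together with the Banach--Lamperti description of surjective isometries of $\ell_p$ as signed permutations --- and you read off (a)--(b) as the $+1$-eigenspace structure of an involutive signed permutation. Your eigenspace bookkeeping is right (on a transposition $\{t,s\}$ carrying common sign $\epsilon$ the fixed vectors are the multiples of $e_t+\epsilon e_s$, and involutivity forces the signs to agree on each transposed pair), and the converse construction, including the check that the $-1$-eigenspace is genuinely isometric to $\ell_p$ because cofinitely many coordinates remain as free $-1$ fixed points, is sound. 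What the paper's route buys is brevity, at the cost of outsourcing all structure to the citation; what your route buys is transparency --- it explains \emph{why} (a)--(b) hold, namely that $Y$ must be spanned by disjoint blocks $e_{t_i}$ or $e_{t_i}\pm e_{s_i}$ --- and it is, in effect, a proof of the relevant case of \cite[Theorem~8]{BP2}.

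Two of your side remarks deserve endorsement, since they touch points the paper glosses over. First, the statement as written fixes the spanning vectors $a^i$, but property-$(HB)$ depends only on $Y$: for $Y=\textrm{span}\{e_1+e_2,\,e_1-e_2\}$, which does have property-$(HB)$, condition (a) fails for that particular spanning set, so the conditions can only be asserted for a suitably renormalized (block) basis --- your "reduced basis" caveat is exactly the correct reading, and it is also the reading under which the converse is true (literally, (b) would not even exclude an extra coordinate of modulus strictly between $0$ and $1$, which would break the converse). Second, the exclusion $p\neq 2$ is indeed essential, both for Banach--Lamperti and because in $\ell_2$ every subspace has property-$(HB)$ while (a)--(b) generally fail; the paper leaves this implicit in the surrounding section.
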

\begin{proof}
Suppose that $Y$ has property-$(HB)$ in $X$. Hence, $Y^\perp$ is the range of a bi-contractive projection. As $Y^\perp=\bigcap_{i=1}^{n}\ker J(a_i)$, where $J$ is the canonical embedding map. Thus, the result follows from \cite[Theorem 8]{BP2}.

  Conversely, it follows from \cite[Theorem 8]{BP2} that $Y^\perp$ is a range of a bi-contractive projection. Since $Y$ has property-$(U)$ in $\ell_p$, the result follows from \cite[Theorem~3.4]{DP}.
\end{proof}

We now discuss the space $\ell_2$. Let $Y$ be a closed subspace of $\ell_2$ and $Y^\perp$ be the orthogonal complement of $Y$. Clearly, for $x\in \ell_2$, $x=y+y^\perp$, $y\in Y$ and $y^\perp\in Y^\perp$, we have that $\|x\|^2=\|y\|^2+\|y^\perp\|^2$. Hence, every subspace of $\ell_2$ has property-$(U)$ ($(SU)$, $(HB)$).

\textit{The above results can now be summarized in the table below}. 

\vspace{4mm}
 \hskip-0.4cm\begin{tabular}{|p{2cm}|p{3cm}|p{3cm}|p{3cm}|}
 \hline
 Space & property-$(U)$ & property-$(SU)$ & property-$(HB)$ \\
 \hline
 $c_0$  & \cite[Theorem~3.3]{P} &  Theorem \ref{H1} & Theorem~\ref{H2}  \\
 \hline
 $\ell_1$   & \cite[Theorem~2.3]{P} & Theorem~\ref{H8} &Theorem~\ref{H100} \\
\hline
 $\ell_p$, 
 \vspace{2mm}\tiny$1<p<\infty$, 
 \vspace{1mm}$p\neq 2$  & Every subspace has this property & Corollary~\ref{H7} & Theorem~\ref{H5}, Theorem~\ref{I4}\\
 \hline
\end{tabular}
\vspace{4mm}

\subsection{In finite co-dimensional subspaces}\label{Sec3}
We continue our discussion for the finite co-dimensional subspaces in the aforementioned sequence spaces. 

First, we discuss in $c_0$. The authors in \cite{DPR} characterize the finite co-dimensional proximinal subspaces having property-$(U)$ in $c_0$ (see \cite[Proposition~6.1]{DPR}). A similar case for property-$(SU)$ is also mentioned in \cite{DPR} as stated below.
\begin{thm}\label{S1}\cite{DPR}
Let $Y\ci c_0$ be a subspace of finite co-dimension. If $Y$ is proximinal and has property-$(SU)$ in $c_0$, then $Y$ can be decomposed in $c_0$ as $Y=F\bigoplus_{\ell_\iy}Z$, where $F$ has property-$(SU)$ in $\ell_\infty(k)$ for some $k\geq \dim(c_0 /Y)$, it is isometric to $\ell_\iy(n)$ for some $n\in \mathbb{N}$ and $Z$ is an $M$-summand in $c_0$. Conversely, if $Y=F\bigoplus_{\ell_\iy}Z$ , where $F$ has property-$(SU)$ in $\ell_\iy(k)$, $Z$ is an $M$-summand in $c_0$, then $Y$ has property-$(SU)$ in $c_0$. 
\end{thm}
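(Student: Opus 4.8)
The plan is to reduce the whole statement to the behaviour of property-$(SU)$ under an $M$-summand splitting of $c_0$, with the finite-dimensional part governed by the ideal structure inside $\ell_\iy(k)$. Throughout I would use that property-$(SU)$ is equivalent to property-$(U)$ together with being an ideal (see \cite{O}), and that by \cite[Theorem~3.4]{DP} a subspace has property-$(SU)$ in a space exactly when the metric projection onto its annihilator is a linear projection of norm $1$. Since $Y$ is finite co-dimensional, $Y^\perp$ is a finite-dimensional subspace of $c_0^*=\ell_1$, and I set $m=\dim Y^\perp=\dim(c_0/Y)$.

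For the forward direction I would first extract the skeleton of the decomposition from proximinality. As $Y$ is proximinal, of finite co-dimension, and has property-$(U)$ in $c_0$, the characterisation of such subspaces in \cite[Proposition~6.1]{DPR} forces $Y^\perp$ to be supported on a finite coordinate set $B\ci\mathbb{N}$. Writing $A=\mathbb{N}\sm B$ and $k=|B|$, this gives the $M$-summand splitting $c_0=\ell_\iy(B)\bigoplus_{\ell_\iy}c_0(A)$ with $\ell_\iy(B)\cong\ell_\iy(k)$. Putting $Z=c_0(A)$ and $F=Y\cap\ell_\iy(B)$, every functional in $Y^\perp$ annihilates $c_0(A)$, so $Z\ci Y$ is an $M$-summand in $c_0$ and $Y=F\bigoplus_{\ell_\iy}Z$. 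The $m$ functionals spanning $Y^\perp$ remain linearly independent as functionals on $\ell_\iy(B)$, so $F$ has co-dimension $m$ in $\ell_\iy(k)$ and hence $k\geq m=\dim(c_0/Y)$.

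The remaining properties of $F$, together with the converse, both flow from one dual computation. Dualising the splitting gives $\ell_1=\ell_1(B)\bigoplus_{\ell_1}\ell_1(A)$, and since $Z$ is the whole $A$-summand one checks $Y^\perp=F^\perp\ci\ell_1(B)$, where $F^\perp$ is the annihilator of $F$ in $\ell_\iy(k)^*=\ell_1(B)$. Because the sum is an $\ell_1$-sum and $Y^\perp$ lies in the first factor, for $(g,h)\in\ell_1(B)\bigoplus_{\ell_1}\ell_1(A)$ one has $d\big((g,h),Y^\perp\big)=d(g,F^\perp)+\|h\|_1$, so the metric projection of $\ell_1$ onto $Y^\perp$ is $(g,h)\mapsto P_{F^\perp}(g)$. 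Thus $P_{Y^\perp}$ is linear of norm $1$ if and only if $P_{F^\perp}$ is, which by \cite[Theorem~3.4]{DP} says precisely that $Y$ has property-$(SU)$ in $c_0$ if and only if $F$ has property-$(SU)$ in $\ell_\iy(k)$. This immediately yields the converse, and in the forward direction it delivers property-$(SU)$ of $F$; finally, property-$(SU)$ makes $F$ an ideal in $\ell_\iy(k)$, and a finite-dimensional ideal there is the range of a norm-$1$ projection, hence isometric to $\ell_\iy(n)$ with $n=\dim F=k-m$ (the $\ell_\iy(k)$-analogue of Proposition~\ref{H3}, via \cite[Proposition~2]{RT} and \cite[Corollary, Pg. 50]{LJ}).

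I expect the genuine obstacle to be the forward structural step: turning proximinality plus finite co-dimension into the statement that $Y^\perp$ is finitely supported, and the resulting $M$-summand splitting. Property-$(SU)$ by itself does \emph{not} force finite support, since the range of a contractive projection on $\ell_1$ can be spanned by disjointly but infinitely supported vectors; so proximinality is doing the essential work, and the cleanest route is to lean on the proximinality characterisation already recorded in \cite[Proposition~6.1]{DPR}. Once the splitting is in hand, the $M$-summand reduction of the preceding paragraph and the finite-dimensional ideal structure are routine and power both implications.
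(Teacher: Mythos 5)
The paper does not actually prove this statement: it is imported verbatim from \cite{DPR} (note the citation attached to the theorem), so there is no in-paper proof to compare against. The closest internal benchmark is the paper's proof of the property-$(HB)$ analogue, Theorem~\ref{S2}, which starts from the same decomposition $Y=F\bigoplus_{\ell_\iy}Z$ supplied by \cite[Proposition~6.1]{DPR}. Your argument follows that skeleton in the forward direction (proximinality forces $Y^\perp$ to be finitely supported, giving $Z=c_0(A)\ci Y$ and $F=Y\cap \ell_\iy(B)$ of co-dimension $\dim(c_0/Y)$ in $\ell_\iy(k)$), but your transfer mechanism differs from the one the paper uses for Theorem~\ref{S2}: you stay entirely in the dual and use the $\ell_1$-sum distance formula $d\bigl((g,h),Y^\perp\bigr)=d(g,F^\perp)+\|h\|_1$ to identify $P_{Y^\perp}$ with $P_{F^\perp}$ acting on the first factor, whereas the paper routes the analogous step through a unique linear Hahn-Banach extension operator $T$, its restriction $T_1$, and adjoint projections. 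Your identification is cleaner: it gives both directions of the equivalence at once, preserves norms, and would yield the $(U)$, $(SU)$ and $(HB)$ versions uniformly. The concluding step, that a finite-dimensional ideal in $\ell_\iy(k)$ is $1$-complemented and hence isometric to $\ell_\iy(n)$, is the correct $\ell_\iy(k)$-analogue of Proposition~\ref{H3} and is justified exactly as in the paper.

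One slip should be corrected. The tool you state as ``property-$(SU)$ holds exactly when the metric projection onto the annihilator is a linear projection of norm $1$'' and attribute to \cite[Theorem~3.4]{DP} is not the $(SU)$ characterization; it is (essentially) the $(HB)$ one. Property-$(SU)$ corresponds to $P_{Y^\perp}$ being single-valued and \emph{linear} (this is \cite[Theorem~3.3]{DP}, the result the paper itself invokes for $(SU)$ inside the proof of Theorem~\ref{S2}); under $(SU)$ alone one only gets $\|I-P_{Y^\perp}\|=1$, while $\|P_{Y^\perp}\|\leq 1$ is precisely the extra condition separating $(HB)$ from $(SU)$. This does not damage your proof, because your dual computation shows $P_{Y^\perp}$ and $P_{F^\perp}$ coincide up to the isometric identification $Y^\perp=F^\perp\times\{0\}$, so single-valuedness, linearity and norm all transfer simultaneously; the argument goes through once you quote the correct characterization.
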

We have the following for property-$(HB)$.
\begin{thm}\label{S2}
Let $Y$ be a subspace of finite co-dimension in $c_0$. Then, $Y$ has property-$(HB)$ in $c_0$ if and only if $Y=F\bigoplus_{\ell_\infty} Z$, where $F\ci \ell_\infty (k)$ for some $k\in \mathbb{N}$, $Z=\{z \in c_0: z(j) = 0 ~\mbox{for all}~ j \leq k\}$ and $F$ has property-$(HB)$ in $\ell_\infty (k)$.
\end{thm}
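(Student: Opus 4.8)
The plan is to reduce both implications to the $M$-summand splitting $c_0 = \ell_\infty(k) \oplus_{\ell_\infty} Z$, where $Z = \{z \in c_0 : z(j) = 0,\ j \le k\}$, together with its dual $\ell_1$-splitting $c_0^* = \ell_1(k) \oplus_{\ell_1} Z^*$. The key device I would isolate first is a transfer lemma: if $Y = F \oplus Z$ with $F \ci \ell_\infty(k)$, then because $Z$ is wholly contained in $Y$, a functional annihilates $Y$ precisely when it is supported on $\{1,\dots,k\}$ and kills $F$, so $Y^\perp$ (computed in $\ell_1$) equals $F^\perp$ (the annihilator of $F$ inside $\ell_\infty(k)^* = \ell_1(k)$). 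Using proximinality of the finite-dimensional $F^\perp$ in $\ell_1(k)$, the metric projection splits as $P_{Y^\perp}(g,h) = (P_{F^\perp}(g), 0)$ for $(g,h) \in \ell_1(k) \oplus_{\ell_1} Z^*$; consequently $P_{Y^\perp}$ is linear with $\|P_{Y^\perp}\| = \|P_{F^\perp}\|$ and $\|I - P_{Y^\perp}\| = \|I - P_{F^\perp}\|$. By \cite[Theorem~3.4]{DP}, which characterizes property-$(HB)$ by $P_{Y^\perp}$ being a linear metric projection with $\|P_{Y^\perp}\| = \|I - P_{Y^\perp}\| = 1$, this yields the equivalence: $Y$ has property-$(HB)$ in $c_0$ if and only if $F$ has property-$(HB)$ in $\ell_\infty(k)$.

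Granting this lemma, the converse is immediate: the hypotheses supply exactly the data $Y = F \oplus Z$, $F \ci \ell_\infty(k)$, and $F$ with property-$(HB)$ in $\ell_\infty(k)$, so the lemma returns property-$(HB)$ of $Y$ in $c_0$.

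For the forward direction, suppose $Y$ has property-$(HB)$ in $c_0$. Since property-$(HB)$ is stronger than property-$(SU)$, the subspace $Y$ is an ideal in $c_0$, and by \cite[Theorem~3.4]{DP} the metric projection $P_{Y^\perp} \colon \ell_1 \ra Y^\perp$ is a bi-contractive linear projection onto the finite-dimensional space $Y^\perp$. The crucial step is to show $Y^\perp$ is finitely supported, i.e. $Y^\perp \ci \ell_1(k)$ for some $k$. I would extract this from the structure of ranges of bi-contractive projections on $\ell_1$, namely the result \cite[Theorem~8]{BP2} already underlying Theorem~\ref{H5}, which forces a spanning set of $Y^\perp$ with finite $\{0,\pm 1\}$-valued supports. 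Once $Y^\perp \ci \ell_1(k)$, every tail vector of $c_0$ lies in $(Y^\perp)_\perp = Y$, so $Z \ci Y$; setting $F = Y \cap \ell_\infty(k)$ gives $Y = F \oplus Z$ with $F \ci \ell_\infty(k)$, and the ``only if'' half of the transfer lemma then shows that $F$ has property-$(HB)$ in $\ell_\infty(k)$.

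The main obstacle is precisely the finite-support claim: converting bi-contractivity of $P_{Y^\perp}$ into the statement that $Y^\perp$ occupies only finitely many coordinates. Everything downstream—the decomposition $Y = F \oplus Z$ and the passage of property-$(HB)$ between $Y$ in $c_0$ and $F$ in $\ell_\infty(k)$—is a routine $\ell_1$-sum computation that reuses \cite[Theorem~3.4]{DP} in both directions. The one point I would verify with care is that the annihilator of $F$ taken inside $\ell_\infty(k)^* = \ell_1(k)$ genuinely coincides with $Y^\perp$ inside $c_0^* = \ell_1$, since this identification is exactly where the containment $Z \ci Y$ is consumed.
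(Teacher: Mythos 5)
Your transfer lemma is correct, and it is in fact a cleaner, unified version of what the paper does in two separate arguments. Given $Y=F\bigoplus_{\ell_\infty}Z$, the identification $Y^\perp=F^\perp\oplus\{0\}$ inside $\ell_1=\ell_1(k)\bigoplus_{\ell_1}Z^*$ and the splitting $P_{Y^\perp}(g,h)=(P_{F^\perp}(g),0)$ do show, via \cite[Theorem~3.4]{DP}, that $Y$ has property-$(HB)$ in $c_0$ exactly when $F$ has property-$(HB)$ in $\ell_\infty(k)$. The paper instead proves the ``only if'' half by restricting the Hahn--Banach extension operator $T:Y^*\to\ell_1$ to $F^*$ and dualizing to obtain a bi-contractive projection $T_1^*:\ell_\infty(k)\to F$ with $(I-T_1^*)^*=P_{F^\perp}$, and the ``if'' half by assembling the bi-contractive projection $Q=P\oplus I_Z$ on $c_0$; your single lemma subsumes both and also keeps track of the single-valuedness (property-$(U)$) part, which the paper's converse passes over quickly.

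The genuine gap is the step you yourself flag as the main obstacle: that property-$(HB)$ forces $Y^\perp\subseteq\ell_1(k)$, i.e., finite support of the annihilator. You propose to extract this from \cite[Theorem~8]{BP2}, but that theorem, as it is used in this paper (Theorems~\ref{H5} and~\ref{HB1}), characterizes \emph{finite co-dimensional} ranges of bi-contractive projections in $\ell_p$ for $1<p<\infty$, $p\neq 2$; what you need is the structure of \emph{finite-dimensional} ranges of bi-contractive projections on $\ell_1$, a case that citation does not cover (and $\ell_1$, being non-reflexive, non-smooth and non-strictly convex, is exactly where the projection and isometry theory of $\ell_p$ does not transfer automatically). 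The claim itself is plausibly true --- bi-contractive projections on $\ell_1$ are of the form $\frac{1}{2}(I+T)$ with $T$ an involutive surjective isometry, hence a signed permutation of coordinates, so a finite-dimensional range is spanned by finitely many vectors each supported on at most two coordinates --- but that argument, or a correct source for it, is precisely what is missing from your write-up. The paper avoids the issue altogether: it obtains the decomposition $Y=F\bigoplus_{\ell_\infty}Z$ up front from \cite[Proposition~6.1]{DPR}, the $c_0$-specific structure theorem for finite co-dimensional proximinal subspaces with property-$(U)$, and together with Theorem~\ref{S1} it then only has to transfer property-$(HB)$ between $Y$ and $F$. That $c_0$-structure theorem is the ingredient your proof needs to replace, and until the finite-support claim is properly established, your forward direction is incomplete.
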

\begin{proof}
Let $Y$ be a subspace of finite co-dimension in $c_0$. Then $Y=F\bigoplus_{\ell_\infty} Z$, where $F\ci \ell_\infty (k)$ for some $k\in \mathbb{N}$ and $Z=\{z \in c_0: z(j) = 0 ~\mbox{for all}~ j \leq k\}$ (see \cite[Proposition~6.1]{DPR}).

Suppose $Y$ has property-$(HB)$ in $c_0$. By Theorem~\ref{S1}, $F$ has property-$(SU)$ in $\ell_\iy(k)$. From \cite[Theorem~3.3]{DP}, we have $P_{F^\perp}:\ell_1(k)\ra \ell_1(k)$ is a linear projection. Now, there exists a unique linear Hahn-Banach extension operator $T:Y^*\ra \ell_1$ i.e., $T(y^*)$ is the unique norm preserving extension of $y^*\in Y^*$. As $Y=F\bigoplus_{\ell_\infty} Z$, we have $T:F^*\bigoplus_{\ell_1} Z^*\ra \ell_1(k)\bigoplus_{\ell_1} Z^*$. Consider $T_1=T|_{F^*}$, then $T_1:F^*\ra \ell_1(k)$ is the Hahn-Banach extension map from $F^*$ to $\ell_1(k)$. One can check that $T^*_1:\ell_{\iy}(k)\ra F$ is a norm-$1$ projection. So, we have $\ell_{\iy}(k)=F\bigoplus G$ for some subspace $G$ of $\ell_{\iy}(k)$. It is clear that  $(I-T_1^*)^*:\ell_1(k)\ra \ell_1(k)$ is a projection and $(I-T_1^*)^*=P_{F^{\perp}}$. If $Y$ has property-$(HB)$ in $c_0$, then $T_1^*$ is a bi-contractive projection, which in other words $\|P_{F^\perp}\|=1$. Hence, by \cite[Theorem~3.4]{DP}, it follows that $F$ has property-$(HB)$ in $\ell_{\iy}(k)$.

Conversely, suppose that $F$ has property-$(HB)$ in $\ell_{\iy}(k)$ and $P:\ell_{\iy}(k)\ra F$ be the bi-contractive projection. We define $Q:\ell_{\iy}(k) \bigoplus_{\ell_\iy} Z\ra Y$  by $Q((a_n)_{n=1}^{k})=P((a_n)_{n=1}^{k})$ for $(a_n)_{n=1}^{k}\in \ell_{\iy}(k)$ and $Q(z)=z$ for $z\in Z$. It is evident that $Q$ is a bi-contractive projection from $c_0$ onto $Y$. Hence, $Y$ has property-$(HB)$ in $c_0$.
\end{proof}

The discussion will now take place in the $\ell_1$ space.
\begin{prop}\label{S6} \cite[Pg. 251]{P}
Let $(a_n)\in \ell_\iy$ and $Y=\ker (a_n)$. Then, $Y$ has property-$(U)$ in $\ell_1$ if and only if $|a_n|>0$ for all $n\in \mathbb{N}$.
\end{prop}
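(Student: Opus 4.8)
The plan is to reduce everything to the metric-projection picture. Assume $a\neq 0$ (otherwise $Y=\ell_1$ and the statement is vacuous), so that $Y=\ker(a_n)$ is a closed hyperplane and its annihilator in $X^*=\ell_\infty$ is the one-dimensional space $Y^\perp=\mathbb{R}a$. By the characterization of Phelps recalled in the introduction, $Y$ has property-$(U)$ in $\ell_1$ if and only if the metric projection $P_{Y^\perp}=P_{\mathbb{R}a}$ is single-valued on $\ell_\infty$; equivalently, for every $x^*\in\ell_\infty$ the convex function $\phi_{x^*}(\mu)=\|x^*-\mu a\|_\infty=\sup_n|x^*_n-\mu a_n|$ has a unique minimiser. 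Thus the whole statement reduces to deciding exactly when some $\phi_{x^*}$ can possess a flat minimising segment.

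For the ``only if'' direction I would argue by contraposition: assume $a_{n_0}=0$ for some $n_0$ and exhibit a functional with non-unique best approximation. Take $x^*=e_{n_0}\in\ell_\infty$, the sequence with $1$ in place $n_0$ and $0$ elsewhere. Since $a_{n_0}=0$, the $n_0$-th coordinate of $x^*-\mu a$ is constantly $1$, while $\sup_{n\neq n_0}|\mu a_n|=|\mu|\,\|a\|_\infty$, so $\phi_{x^*}(\mu)=\max(1,|\mu|\,\|a\|_\infty)$. This equals its minimal value $1$ for every $\mu$ with $|\mu|\le 1/\|a\|_\infty$, so $P_{\mathbb{R}a}(e_{n_0})$ contains the whole segment $\{\mu a:|\mu|\le 1/\|a\|_\infty\}$ and is not a singleton. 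Hence $Y$ does not have property-$(U)$ in $\ell_1$.

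For the converse I would assume $a_n\neq 0$ for all $n$ and try to show that each $\phi_{x^*}$ has a unique minimiser. Suppose not: then there are $\lambda_1<\lambda_2$ with $\phi_{x^*}\equiv M$ on $[\lambda_1,\lambda_2]$, where $M=\min\phi_{x^*}$. If $M=0$ then $x^*\in\mathbb{R}a$ has a unique such representation and there is nothing to prove, so assume $M>0$. Fix an interior point $\mu_0\in(\lambda_1,\lambda_2)$. For each fixed coordinate $k$ the map $\mu\mapsto|x^*_k-\mu a_k|$ is convex and, because $a_k\neq 0$, non-constant (a genuine V-shape), so on $[\lambda_1,\lambda_2]$ it attains its maximum only at an endpoint; in particular it cannot equal $M$ at the interior point $\mu_0$ while remaining $\le M$ throughout. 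Consequently the supremum defining $\phi_{x^*}(\mu_0)=M$ is never attained at a single coordinate, which forces a coordinate sequence $k_j$ with $|x^*_{k_j}-\mu_0 a_{k_j}|\to M$, and I would then feed this back into the endpoint bounds $|x^*_{k_j}-\lambda_i a_{k_j}|\le M$ together with $a_{k_j}\neq 0$.

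The main obstacle is precisely this last step. In infinite dimensions the $\ell_\infty$-norm need not be attained, so the flatness of $\phi_{x^*}$ on $[\lambda_1,\lambda_2]$ is sustained entirely by the asymptotic behaviour of coordinates $k_j\to\infty$, and controlling the weights $a_{k_j}$ (whose size along such a tail one cannot bound below merely from the coordinatewise hypothesis $a_n\neq 0$) is the delicate point. This is the crux of the argument and the place where the hypothesis must be exploited quantitatively rather than coordinatewise; I would expect the proof to hinge on showing that a flat segment of positive length can be produced only through coordinates whose weights degenerate at infinity, so that ruling out such tails is exactly what makes the converse work.
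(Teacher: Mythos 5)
Your reduction to single-valuedness of the metric projection onto $Y^{\perp}=\mathbb{R}a$ in $\ell_\infty$ is the right frame, and your contrapositive proof of the ``only if'' direction is complete and correct. But the gap you ran into in the converse is not a technical obstacle you failed to overcome: with the hypothesis as printed ($|a_n|>0$ for every $n$) the converse is \emph{false}, and your diagnosis that the hypothesis ``must be exploited quantitatively rather than coordinatewise'' is exactly the point. Take $a_n=1/n$ and $x^{*}=(1,1,1,\dots)\in\ell_\infty$. For every $\mu\in[0,2]$ and every $n$ we have $|1-\mu/n|\le 1$, while $|1-\mu/n|\to 1$ as $n\to\infty$; hence $\phi_{x^{*}}(\mu)=\sup_n|1-\mu/n|=1$ on all of $[0,2]$. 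Since moreover $\phi_{x^{*}}(\mu)\ge\lim_n|1-\mu/n|=1$ for every $\mu\in\mathbb{R}$, the minimum value is $1$ and it is attained at every point of $[0,2]$, so $x^{*}|_Y$ has the distinct norm-preserving extensions $x^{*}-\mu a$, $0\le\mu\le 2$. Thus $\ker(a)$ fails property-$(U)$ although every $a_n>0$. The proposition is true only with the uniform hypothesis $\inf_n|a_n|>0$, and that is how the citation to Phelps must be read (the paper gives no proof of its own, so the defect is in the transcription of the statement, not in an argument you could compare against).

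Once the hypothesis is $\delta=\inf_n|a_n|>0$, your own V-shape analysis closes the converse in two lines, with no need to chase the tail coordinates $k_j$. Suppose $\phi_{x^{*}}\equiv M$ on $[\lambda_1,\lambda_2]$ with $\lambda_1<\lambda_2$, and let $\mu_0$ be the midpoint. Each $g_n(\mu)=|x^{*}_n-\mu a_n|$ satisfies $g_n(\lambda_1)\le M$, $g_n(\lambda_2)\le M$, and is V-shaped with slopes $\pm|a_n|$; if its vertex lies at or to the left of $\mu_0$ then $M\ge g_n(\lambda_2)=g_n(\mu_0)+|a_n|(\lambda_2-\mu_0)$, and otherwise $M\ge g_n(\lambda_1)=g_n(\mu_0)+|a_n|(\mu_0-\lambda_1)$; either way $g_n(\mu_0)\le M-\delta(\lambda_2-\lambda_1)/2$. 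Taking the supremum over $n$ gives $\phi_{x^{*}}(\mu_0)\le M-\delta(\lambda_2-\lambda_1)/2<M$, a contradiction, so the minimiser is unique and property-$(U)$ holds. For the other direction of the corrected statement, your $e_{n_0}$ construction generalises: if $\inf_n|a_n|=0$, choose indices with $|a_{n_j}|\to 0$, let $x^{*}$ have coordinate $\mathrm{sign}(a_{n_j})$ at $n_j$ and $0$ elsewhere, and the same computation as in the counterexample above produces a flat minimising segment, so property-$(U)$ fails. In short: keep your ``only if'' argument, correct the hypothesis to $\inf_n|a_n|>0$, and your outline becomes a complete, self-contained proof.
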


\begin{rem} \label{S7}
It is also discussed in \cite{P} that there is no subspace of co-dimension greater than one having property-$(U)$ in $\ell_1$. Hence, there is no subspace of co-dimension greater than one having property-$(SU)$, ($(HB)$) in $\ell_1$
\end{rem}
The situation remains unchanged when the co-dimension is one.

\begin{thm}\label{I1}
Let $Y$ be a subspace of co-dimension $1$ in $\ell_1$. Then $Y$ does not have property-$(SU)$ in $\ell_1$.
\end{thm}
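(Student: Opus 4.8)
The plan is to assume $Y$ has property-$(SU)$ and then manufacture a bounded linear functional on $\ell_\infty$ whose sequence of coordinate values is bounded away from $0$, contradicting the summability that boundedness forces. First I would write $Y=\ker a$ for some nonzero $a=(a_n)\in\ell_\infty=\ell_1^*$, so that $Y^\perp=\mathbb{R}a$. Since property-$(SU)$ implies property-$(U)$, Proposition~\ref{S6} forces $a_n\neq 0$ for every $n$; in particular $0<\|a\|_\infty<\infty$. By \cite[Theorem~3.3]{DP}, property-$(SU)$ guarantees that the metric projection $P_{Y^\perp}\colon\ell_\infty\to\mathbb{R}a$ is a bounded linear projection, so there is $\psi\in(\ell_\infty)^*$ with $P_{Y^\perp}(x)=\psi(x)\,a$ for all $x$ and $\psi(a)=1$.

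Next I would evaluate $P_{Y^\perp}$ on the standard unit vectors $e_j\in\ell_\infty$. Writing $s_j=\sup_{n\neq j}|a_n|$, the scalar $\lambda$ minimizing $\|e_j-\lambda a\|_\infty=\max\bigl(|1-\lambda a_j|,\,|\lambda|s_j\bigr)$ is, by a one-variable computation, uniquely $\lambda=\operatorname{sgn}(a_j)/(|a_j|+s_j)$: the decreasing branch $|1-\lambda a_j|$ and the increasing branch $|\lambda|s_j$ balance exactly there, and the minimizer is unique precisely because property-$(U)$ makes $P_{Y^\perp}$ single-valued. Since $P_{Y^\perp}(e_j)=\psi(e_j)\,a$ must equal this best approximation and $a\neq 0$, I get $\psi(e_j)=\operatorname{sgn}(a_j)/(|a_j|+s_j)$, hence the uniform lower bound $|\psi(e_j)|=1/(|a_j|+s_j)\ge 1/(2\|a\|_\infty)$ for every $j$.

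Finally I would derive the contradiction from boundedness of $\psi$. For any finite set $F$, choosing signs $\sigma_j=\operatorname{sgn}(\psi(e_j))$ and testing $\psi$ against $\sum_{j\in F}\sigma_j e_j\in B_{\ell_\infty}$ gives $\sum_{j\in F}|\psi(e_j)|=\psi\bigl(\sum_{j\in F}\sigma_j e_j\bigr)\le\|\psi\|$. Thus $\sum_j|\psi(e_j)|\le\|\psi\|<\infty$, whereas the previous step shows each term is at least $1/(2\|a\|_\infty)>0$, so the series diverges. This contradiction shows $Y$ cannot have property-$(SU)$ in $\ell_1$.

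The main obstacle is locating the right test and the right closed form: the whole argument hinges on recognizing that the metric-projection coefficients of the coordinate vectors $e_j$ are bounded uniformly away from $0$, which clashes with the $\ell_1$-summability that linearity of $\psi$ imposes on $(\psi(e_j))_j$. The one-variable minimization producing the explicit value $\operatorname{sgn}(a_j)/(|a_j|+s_j)$, together with verifying its uniqueness, is the step needing a little care; everything after that is routine.
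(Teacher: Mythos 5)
Your proof is correct, but it takes a genuinely different route from the paper's. The paper stays on the $Y^{\#}$ side: property-$(SU)$ gives (via Oja's decomposition $\ell_\infty=Y^{\perp}\oplus Y^{\#}$) a norm-one projection $P$ of $\ell_\infty$ onto the one-codimensional subspace $Y^{\#}$ satisfying $\|P(x)\|<\|x\|$ whenever $P(x)\neq x$, and the contradiction is then outsourced to Baronti's theorem on norm-one projections onto subspaces of finite codimension of $\ell_\infty$ \cite[Theorem~1.1]{B}. You instead work with the complementary projection: the metric projection $P_{Y^{\perp}}$ onto the one-dimensional subspace $Y^{\perp}=\mathbb{R}a$, which is linear by \cite[Theorem~3.3]{DP} (and automatically bounded, since $\|P_{Y^{\perp}}(x)\|\leq \|x\|+d(x,Y^{\perp})\leq 2\|x\|$), and you extract the functional $\psi$ with $P_{Y^{\perp}}=\psi(\cdot)\,a$. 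Your computation of $\psi(e_j)=\mathrm{sgn}(a_j)/(|a_j|+s_j)$ is sound: $s_j>0$ because Proposition~\ref{S6} forces every $a_n\neq 0$ and there are infinitely many coordinates, the balance point of the decreasing and increasing branches is indeed the unique minimizer (direct computation confirms this, so the appeal to single-valuedness of $P_{Y^{\perp}}$ is a convenience, not a necessity), and the clash between the uniform lower bound $1/(2\|a\|_\infty)$ and the bound $\sum_{j}|\psi(e_j)|\leq\|\psi\|$ is a valid contradiction; note that $\psi$ need not lie in $\ell_1$, but your argument only tests it against finitely supported sign vectors, so that is not an issue. What your approach buys is self-containedness: it replaces the appeal to Baronti's hyperplane theorem with an explicit best-approximation computation plus an elementary summability estimate, in effect reproving the special case of that theorem needed here; the paper's proof is shorter but leans on the external result.
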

\begin{proof}
Suppose that $Y$ has property-$(SU)$ in $\ell_1$. Then there exists a norm-$1$ projection $P:\ell_{\iy}\ra Y^\#$ such that $\|P((a_n))\|<\|(a_n)\|$ whenever $P((a_n))\neq (a_n)$, $(a_n)\in \ell_\infty$. Again, as $Y$ has property-$(SU)$ in $\ell_1$, we have that $\ell_\infty=Y^{\perp}\bigoplus Y^\#$. Since $Y^\perp$ is an $1$-dimensional subspace, $Y^\#$ is an one co-dimensional subspace. Hence, the result follows from \cite[Theorem~1.1]{B}.
\end{proof}

We now come for the spaces $\ell_p$, $1<p<\iy$, $p\neq 2$.
By \cite[Proposition~2]{RT}, every ideal in $\ell_p$ is $1$-complemented subspace. Hence, the following consequence follows from  \cite[Theorem~3.4]{BP1}.
\begin{prop}\label{S4}
Let $Y$ be a subspace of co-dimension $n$ in $\ell_p$. Then, $Y$ is an ideal in $\ell_p$ if and only if there exists a basis $\{a^1,a^2,\dots,a^n\}$ for $Y^\perp$ such that every $a^i$ has at most two non-null components for $1\leq i\leq n$. 
\end{prop}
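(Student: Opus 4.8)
The plan is to translate the ideal condition into a statement about contractive projections in $\ell_p$ itself and then read off the coordinate description from the literature.

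The first step is to show that, for $Y$ of finite codimension in $\ell_p$ ($1<p<\iy$), being an ideal is the same as being $1$-complemented. If $Y$ is an ideal, then by \cite[Proposition~2]{RT} there is a norm-$1$ projection $Q:\ell_p\ra Y$. For the reverse implication, given a norm-$1$ projection $Q$ onto $Y$, its adjoint $Q^*$ is a projection on $\ell_q$ (with $\frac1p+\frac1q=1$) satisfying $\|Q^*\|=\|Q\|=1$ and $\ker Q^*=(\mathrm{range}\,Q)^\perp=Y^\perp$; hence $Y^\perp$ is the kernel of a norm-$1$ projection on $\ell_p^*=\ell_q$, which is exactly the definition of $Y$ being an ideal. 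Thus $Y$ is an ideal in $\ell_p$ if and only if $Y$ is the range of a norm-$1$ projection.

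The second step is to apply the coordinate characterization of contractive projections. Since $Y$ has codimension $n$, $Y^\perp$ is an $n$-dimensional subspace of $\ell_q$, and \cite[Theorem~3.4]{BP1} describes exactly when a subspace of $\ell_p$ of finite codimension is $1$-complemented: this holds if and only if $Y^\perp$ has a basis $\{a^1,\dots,a^n\}$ in which each $a^i$ has at most two non-null coordinates. Chaining this with the first step yields the stated equivalence.

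The step needing the most care is the first, primal--dual translation: the ideal condition is phrased via a projection on $\ell_q$, whereas \cite[Proposition~2]{RT} and \cite[Theorem~3.4]{BP1} speak of projections onto $Y\ci\ell_p$, so one must track annihilators and adjoints correctly. Once the dictionary ``ideal $\Leftrightarrow$ $1$-complemented'' is fixed, no estimates remain and the coordinate description is imported directly from \cite[Theorem~3.4]{BP1}. Note finally that the restriction $p\neq 2$ (in force throughout this subsection) is essential: in $\ell_2$ every subspace is an ideal, yet not every finite-dimensional $Y^\perp$ admits a basis of vectors with at most two non-null coordinates.
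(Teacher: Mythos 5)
Your proposal is correct and follows essentially the same route as the paper: reduce ``ideal'' to ``$1$-complemented'' via \cite[Proposition~2]{RT}, then import the coordinate description from \cite[Theorem~3.4]{BP1}. The only difference is that you spell out the easy converse (norm-$1$ projection onto $Y$ yields, via the adjoint, a norm-$1$ projection on $\ell_q$ with kernel $Y^\perp$) and the role of $p\neq 2$, both of which the paper leaves implicit.
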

Since every subspace of $\ell_p$ has property-$(U)$, we have the following.
\begin{thm}\label{S5}
Let $Y$ be a subspace of co-dimension $n$ in $\ell_p$. Then, $Y$ has property-$(SU)$ in $\ell_p$ if and only if there exists a basis $\{a^1,a^2,\dots,a^n\}$ for $Y^\perp$ such that every $a^i$ has at most two non-null components for $1\leq i\leq n$.
\end{thm}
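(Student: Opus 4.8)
The plan is to prove Theorem~\ref{S5} by combining the ideal-characterization in Proposition~\ref{S4} with the general principle, recorded earlier in the excerpt, that property-$(SU)$ decomposes into property-$(U)$ together with being an ideal. Specifically, O.\ has shown (as quoted in the paragraph preceding Proposition~\ref{H3}) that a subspace $Y$ has property-$(SU)$ in $X$ if and only if $Y$ has property-$(U)$ in $X$ \emph{and} $Y$ is an ideal in $X$. Since we are working in $\ell_p$ with $1<p<\iy$, $p\neq 2$, the dual $\ell_q$ is strictly convex, so by Taylor's theorem (cited in the background section) \emph{every} subspace of $\ell_p$ automatically has property-$(U)$. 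Thus the property-$(U)$ half is free, and property-$(SU)$ collapses to the ideal condition alone.

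First I would invoke the standing observation that every subspace of $\ell_p$ has property-$(U)$; this is already flagged immediately before the statement and follows from the strict convexity of $\ell_q$. This removes property-$(U)$ from consideration entirely. The only content left is to show that, for a finite co-dimensional $Y$, property-$(SU)$ is equivalent to $Y$ being an ideal. This is exactly the reduction supplied by O.'s result. Consequently the theorem becomes a verbatim restatement of Proposition~\ref{S4}, with ``is an ideal'' replaced by ``has property-$(SU)$.''

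So the two-line argument is: by O.'s characterization, $Y$ has property-$(SU)$ in $\ell_p$ iff $Y$ has property-$(U)$ in $\ell_p$ and $Y$ is an ideal in $\ell_p$; the first condition holds automatically since $\ell_q$ is strictly convex; hence property-$(SU)$ is equivalent to being an ideal, and Proposition~\ref{S4} then gives the stated basis condition on $Y^{\perp}$, namely that there is a basis $\{a^1,\dots,a^n\}$ for $Y^{\perp}$ in which each $a^i$ has at most two non-null components. This mirrors precisely the structure of Corollary~\ref{H7}, where the same ideal $\Rightarrow$ property-$(SU)$ passage was used in the finite-dimensional case.

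I do not expect any genuine obstacle here, since every hard input (the ideal characterization of property-$(SU)$, the $1$-complementation of ideals in $\ell_p$, and the explicit two-components basis description) has already been established or cited. The only point requiring a word of care is making sure the hypotheses of Proposition~\ref{S4} match: it is stated for co-dimension $n$ subspaces and already incorporates (via \cite[Proposition~2]{RT}) that ideals in $\ell_p$ are $1$-complemented, so there is no additional complementability assumption to verify. The proof is therefore essentially a citation-chaining argument, and I would write it in a single short paragraph.
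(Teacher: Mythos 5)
Your proposal is correct and is exactly the paper's argument: the paper presents Theorem~\ref{S5} with no separate proof, prefaced only by ``Since every subspace of $\ell_p$ has property-$(U)$, we have the following,'' which is precisely your reduction via Oja's characterization (property-$(SU)$ $=$ property-$(U)$ $+$ ideal) combined with Proposition~\ref{S4}. Nothing further is needed.
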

In the case of property-$(HB)$, we have the following.
\begin{thm}\label{HB1}
Let $Y$ be a subspace of co-dimension $n$ in $\ell_p$. Then, $Y$ has property-$(HB)$ in $\ell_p$ if and only if  there exist $n$ elements $a^1,a^2,\dots,a^n$ in $\ell_q$, $(\frac{1}{p}+\frac{1}{q}=1)$ and $n$ different indices
$t_1,t_2,\dots,t_n$ such that
\bla
\item $Y=\bigcap_{i=1}^{n}\ker (a^i)$.
\item $(a^i)_{t_j}=\delta_{ij}$, $1\leq i,j \leq n$.
\item for every $i\in \mathbb{N}$ , there exists at most one index $s_i\notin \{t_1,t_2,\dots,t_n\}$ such that $|(a^i)_{s_i}| =1$ and $(a^j)_{s_i}= 0$ for $j\neq i$.
\el 
\end{thm}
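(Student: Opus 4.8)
The plan is to reduce property-$(HB)$ to the existence of a bi-contractive projection onto $Y^{\perp}$ and then invoke the structural description of such projections supplied by \cite[Theorem~8]{BP2}. Since $1<p<\iy$ with $p\neq 2$, the space $\ell_p$ is reflexive and every subspace of $\ell_p$ has property-$(U)$; hence by \cite[Theorem~3.4]{DP} the subspace $Y$ has property-$(HB)$ in $\ell_p$ exactly when the metric projection $P_{Y^{\perp}}:\ell_q\ra Y^{\perp}$ is a bi-contractive projection, equivalently when the finite-dimensional subspace $Y^{\perp}\ci\ell_q$ is the range of a norm-$1$ projection $P$ with $\|I-P\|=1$. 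Here $Y^{\perp}=\textrm{span}\{a^1,\dots,a^n\}$ is an $n$-dimensional subspace of $\ell_q=(\ell_p)^*$ because $Y$ has co-dimension $n$. This is the same reduction used in the converse of Theorem~\ref{H5}.

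For the forward direction, I would assume $Y$ has property-$(HB)$ in $\ell_p$, so that $Y^{\perp}$ is the range of a bi-contractive projection on $\ell_q$. Applying \cite[Theorem~8]{BP2} to the $n$-dimensional subspace $Y^{\perp}\ci\ell_q$ produces a basis $a^1,\dots,a^n$ of $Y^{\perp}$ together with distinct indices $t_1,\dots,t_n$ realizing the normalization $(a^i)_{t_j}=\delta_{ij}$ of condition (b) and the single-extra-support condition of (c). Condition (a) is then automatic: by the standard pre-annihilator relation for a closed subspace, $Y={}^{\perp}(Y^{\perp})=\bigcap_{i=1}^n\ker(a^i)$, since $Y^{\perp}=\textrm{span}\{a^1,\dots,a^n\}$. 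This is precisely the dual-side analogue of the finite-dimensional argument in Theorem~\ref{H5}.

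For the converse, I would run the implication in reverse starting from the three conditions. Conditions (b) and (c) are exactly the disjoint-support hypotheses of \cite[Theorem~8]{BP2}, so $Y^{\perp}=\textrm{span}\{a^1,\dots,a^n\}$ is the range of a bi-contractive projection on $\ell_q$, while condition (a) identifies this span with $Y^{\perp}$. As $Y$ automatically has property-$(U)$ in $\ell_p$, \cite[Theorem~3.4]{DP} upgrades the bi-contractivity of $P_{Y^{\perp}}$ to property-$(HB)$ of $Y$, mirroring the converse of Theorem~\ref{H5}.

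The main obstacle I anticipate is invoking \cite[Theorem~8]{BP2} in the correct dual formulation. In Theorem~\ref{H5} that result was applied to describe $Y^{\perp}$ as an \emph{intersection of kernels} (a finite co-dimensional object in $\ell_q$), whereas here $Y^{\perp}$ is itself the finite-dimensional \emph{span}, so I must verify that the normalization and single-support conclusions of \cite[Theorem~8]{BP2} attach to the chosen basis $\{a^i\}$ of $Y^{\perp}$ rather than to a complementary system. The reflexivity of $\ell_p$ and the bookkeeping between $Y$ and $Y^{\perp}$ that ensures the indices $t_1,\dots,t_n$ genuinely witness $(a^i)_{t_j}=\delta_{ij}$ are the delicate points; once these are matched, the remaining verifications are routine.
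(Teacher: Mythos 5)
Your opening reduction is sound: since every subspace of $\ell_p$ ($1<p<\iy$, $p\neq 2$) has property-$(U)$, \cite[Theorem~3.4]{DP} does make property-$(HB)$ of $Y$ equivalent to the existence of a linear bi-contractive projection onto $Y^\perp$. The genuine gap is exactly the point you flag as your ``main obstacle'' and then defer as routine: \cite[Theorem~8]{BP2} cannot be applied to $Y^\perp$ at all. That theorem characterizes finite \emph{co-dimensional} ranges of bi-contractive projections in a sequence space, and its conditions attach to the functionals whose kernels cut out the range, i.e.\ to a basis of the \emph{annihilator} of the range. But $Y^\perp$ is a finite-\emph{dimensional} subspace of $\ell_q$, so the theorem is simply not licensed there; and if you instead applied it to the finite co-dimensional kernel of your projection $P_{Y^\perp}$ (namely $Y^\#$), its conclusion would produce normalized elements of $\ell_p$ spanning $(Y^\#)^\perp$, not the basis $\{a^i\}\ci\ell_q$ of $Y^\perp$ that the statement requires. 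The same misattribution occurs in your converse, where you claim \cite[Theorem~8]{BP2} outputs a bi-contractive projection onto $\textrm{span}\{a^1,\dots,a^n\}$; with hypotheses (a)--(c) it actually outputs one onto $\bigcap_{i=1}^n\ker(a^i)=Y$. So in both directions the key citation is anchored at the wrong object, and the deferred ``matching'' is the entire content of the proof.

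The repair is a short duality step that eliminates the detour through $Y^\perp$, and it lands precisely on the paper's argument. Forward direction: property-$(HB)$ makes $Y$ an ideal, hence by \cite[Proposition~2]{RT} the range of a norm-$1$ projection $P:\ell_p\ra Y$; the argument of Theorem~\ref{H2} gives $\|I-P\|=1$, so $Y$ itself --- a finite co-dimensional subspace of $\ell_p$ --- is the range of a bi-contractive projection. (Equivalently, starting from your bi-contractive $Q=P_{Y^\perp}$ on $\ell_q$, reflexivity of $\ell_p$ shows $I-Q^*$ is a bi-contractive projection of $\ell_p$ onto $Y$.) Now \cite[Theorem~8]{BP2} applies to $Y$ in its intended formulation, and its conclusion is verbatim conditions (a)--(c): the functionals it produces lie in $\ell_q$, cut out $Y$, and hence form a basis of $Y^\perp$ with the normalization and single-extra-support properties. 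Converse: conditions (a)--(c) are the hypotheses of \cite[Theorem~8]{BP2} for $Y$, so $Y$ is the range of a bi-contractive projection $P:\ell_p\ra Y$; then $I-P^*$ is a bi-contractive projection of $\ell_q$ onto $Y^\perp$, and \cite[Theorem~3.4]{DP}, together with the automatic property-$(U)$, yields property-$(HB)$. Your skeleton (property-$(HB)$ $\Lglra$ bi-contractive projection $\Lglra$ Baronti--Papini conditions) is the right one, but the theorem must be invoked for $Y$, not for $Y^\perp$.
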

\begin{proof}
Suppose that $Y$ has property-$(HB)$ in $\ell_p$. Hence, $Y$ is an ideal, and by \cite[Proposition 2]{RT}, $Y$ is the range of a norm-$1$ projection. Let $P:\ell_p\ra Y$ be a projection. Using similar arguments used in the proof in Theorem~\ref{H2}, we have that $\|I-P\|=1$. Thus, $Y$ is the range of a bi-contractive projection. Therefore, the result follows from \cite[Theorem~8]{BP2}.

Conversely, by \cite[Theorem~8]{BP2}, we have that $Y$ is a range of a bi-contractive projection, say, $P:\ell_p\ra Y$. Then $I-P^*:\ell_p\ra Y^\perp$ is a bi-contractive projection. Hence, the results from \cite[Theorem~3.4]{DP}.
\end{proof}

{\it The above results can now be summarized in the table below}. 

\vspace{4mm}
 \hskip-0.4cm\begin{tabular}{|p{1cm}|p{3.5cm}|p{3.5cm}|p{3.5cm}|}
 \hline
 Space & property-$(U)$ & property-$(SU)$ & property-$(HB)$ \\
 \hline
 $c_0$  & See \cite[Proposition~6.1]{DPR}

 & $\bullet$ For co-dim=1, see \cite[Theorem~6.8]{DPR}
 
 \vspace{4mm}
 $\bullet$ For co-dim $>1$, see Theorem~\ref{S1}.
 & $\bullet$ For co-dim=1, see \cite[Corollary~3.20]{DP}
 
 \vspace{4mm}
 $\bullet$ For co-dim$>1$, see Theorem~\ref{S2}\\
 \hline
 $\ell_1$   & $\bullet$ For co-dim$=1$, see Proposition~\ref{S6}.
 
 \vspace{4mm}
 $\bullet$ None of co-dim$>1$, see Remark~\ref{S7}.
 & $\bullet$  None of co-dim$=1$, see Theorem~\ref{I1}.
 
 \vspace{4mm}
 $\bullet$ None of co-dim$>1$, see Remark~\ref{S7}. 
 & $\bullet$  None of co-dim$=1$, see Theorem~\ref{I1}.
 
 \vspace{4mm}
 $\bullet$ None of co-dim$>1$, see Remark~\ref{S7}. \\
\hline
 $\ell_p$, 
 
 \vspace{2mm}\tiny$1<p<\infty$,
 
 $p\neq 2$
 & Every subspace has this property & Theorem~\ref{S5} & Theorem~\ref{HB1}\\
 \hline
\end{tabular}

\bibliographystyle{plain, abbrv}

\begin{thebibliography}{9999abc}
 \bibitem{B} M. Baronti, {\em A note on norm-one projections onto subspaces of finite codimension of $l^\infty$}, Arch. Math. (Basel) {\bf 54}(4), 384--388 (1990).
 
 \bibitem{BP3} M. Baronti, P. L. Papini, {\em Norm-one projections onto subspaces of finite codimension in $l_1$ and $c_0$. Period}, Math. Hungar. {\bf 22}(3), 161--174 (1991).
 
 \bibitem{BP2} M. Baronti, P. L. Papini, {\em Bicontractive projections in sequence spaces and a few related kinds of maps}, Comment. Math. Univ. Carolin. {\bf 30}(4), 665--673 (1989).

\bibitem{BP1} M. Baronti, P. L. Papini, {\em Norm-one projections onto subspaces of $l_p$}, Ann. Mat. Pura Appl. {\bf 152}(4), 53--61 (1988).

\bibitem{BR} S. Basu, T. S. S. R. K. Rao, {\em Some stability results for asymptotic norming properties of Banach spaces}, Colloq. Math. {\bf 75}(2), 271--284 (1998).

\bibitem{DP} S. Daptari, T. Paul, {\em uniqueness of Hahn-Banach extensions and some of its variants}, Adv. Oper. Theory {\bf 7}(36), (2022).

\bibitem{DPR} S. Daptari, T. Paul, T. S. S. R. K. Rao, {\em Stability of unique Hahn-Banach extensions and associated linear projections}, Linear Multilinear Algebra {\bf 70}(21), 6067--6083 (2022).

\bibitem{F} S. R. Foguel, {\em On a theorem by A. E. Taylor}, Proc. Amer. Math. Soc. {\bf 9}, 325 (1958).

\bibitem{HWW} P. Harmand, D. Werner, W. Werner, {\em M-ideals in Banach spaces and Banach algebras}, Lecture Notes in Mathematics 1547, Springer-Verlag, Berlin, viii+387, (1993).

\bibitem{H} J. Hennefeld, {\em $M$-ideals, HB-subspaces, and compact operators}, Indiana Univ. Math. J. \textbf{28}(6), 927-934 (1979).

\bibitem{L} H. E. Lacey, {\em The isometric theory of classical Banach spaces}, Die Grundlehren der mathematischen Wissenschaften, Band 208. Springer-Verlag, New York-Heidelberg, (1974).

\bibitem{LA} \AA. Lima, {\em Uniqueness of Hahn-Banach extensions and liftings of linear dependences}, Math. Scand. {\bf 53}(10), 97--113 (1983).

\bibitem{LJ} J. Lindenstruass, {\em Extension of compact operators}, Mem. Amer. Math. Soc. {\bf 48}, 112 pp (1964).

\bibitem{O} \`E. F. Oya, {\em Strong uniqueness of the extension of linear continuous functionals according to the Hahn-Banach theorem}, Math. Notes {\bf 43}(1-2), 134--139 (1988).

\bibitem{P} R. R. Phelps, {\em Uniqueness of Hahn-Banach extensions and unique best approximation}, Trans. Amer. Math. Soc. {\bf 95}, 238-255 (1960).

\bibitem{RB} B. Randrianantoanina, {\em $1$-complemented subspaces of spaces with $1$-unconditional bases}, Canad. J. Math. {\bf 49}(6), 1242--1264 (1997).

\bibitem{RT} T. S. S. R. K. Rao, {\em On ideals in Banach spaces}, Rocky Mountain J. Math. {\bf 31}(2), 595--609 (2001).

\bibitem{T} A. E. Taylor, {\em The extension of linear functional}, Duke Math. J. {\bf 5} 538--547 (1939).

\bibitem{Y2} D. Yost, {\em The $n$-ball properties in real and complex Banach spaces}, Math. Scaned. {\bf 52}(1), 100--110 (1982).

\bibitem{Y1} D. Yost, {\em Best approximation and intersection of balls}, Bull. Aust. Math. Soc., {\bf 20}(2), 285--300 (1979).

\end{thebibliography}

\end{document}